\journal{Transportation Science}
\numberwithin{equation}{section}
\newcommand\BRFrag{battery-restricted fragment\xspace}
\newcommand\BRFrags{battery-restricted fragments\xspace}
\newcommand\SOS{checker\xspace}
\newcommand\FBC{SaC\xspace}
\newcommand\SOSfull{Fragment-Based Checker\xspace}
\newcommand\FBCfull{Select-and-Check\xspace}
\newcommand{\beq}{\begin{equation}}
\newcommand{\eeq}{\end{equation}}
\theoremstyle{definition}
\newtheorem{exmp}{Example}
\theoremstyle{remark}
\theoremstyle{TH}
\newtheorem{theorem}{Theorem}
\newtheorem{defn}{Definition}
\newtheorem{observation}{Observation}
\def\@makecaption#1#2{%
  \vskip\abovecaptionskip
  \sbox\@tempboxa{#1 #2}%
    {\bfseries #1} #2\par
  \vskip\belowcaptionskip}
\begin{document}

\begin{frontmatter}
\title{The Bi-objective Electric Autonomous Dial-a-Ride Problem}
\author[address1]{Yue Su \corref{correspondingauthor}}
\ead{yue.su@inria.fr}
\author[address3]{Sophie N. Parragh}
\ead{sophie.parragh@jku.at}
\author[address2]{Nicolas Dupin}
\ead{nicolas.dupin@univ-angers.fr}
\author[address4,address5]{Jakob Puchinger}
\ead{jpuchinger@em-normandie.fr}

\address[address1]{Univ. Lille, Inria, CNRS, Centrale Lille, UMR 9189 CRIStAL, F-59000 Lille, France.}
\address[address2]{Univ Angers, LERIA, SFR MATHSTIC, F-49000 Angers, France.}
\address[address3]{Institute of Production and Logistics Management, Johannes Kepler University Linz, 4040, Linz, Austria.}
\address[address4]{EM Normandie Business School, Métis Lab, 92110, Clichy, France.}
\address[address5]{Université Paris-Saclay, CentraleSupélec, Laboratoire Génie Industriel, 91190, Gif-sur-Yvette, France.}

\cortext[correspondingauthor]{Corresponding author: yue.su@inria.fr}

\begin{abstract}
The electric autonomous dial-a-ride problem (E-ADARP) introduces electric, autonomously driving vehicles and their unique requirements into the classic dial-a-ride problem, where people are transported between pickup and drop-off locations. Next to an electric autonomous vehicle fleet, in the literature, a weighted-sum objective function, which combines the classic routing cost-oriented objective with a user-oriented objective function, has usually been considered. The user-oriented objective function minimizes the total excess user ride time. In this work, we treat them as two separate objective functions, which are optimized concurrently. In order to address the resulting bi-objective E-ADARP, we develop a novel exact framework (called fragment-based checker), whose core part is a smart ``select-and-check" algorithm that iteratively constructs feasible solutions using fragments. Several enhancements are proposed to enforce the computational efficiency of the proposed method. In the computational experiments, we evaluate several variants of our checker algorithm by leveraging a previously developed branch-and-price algorithm. We benchmark the checker-based framework against state-of-the-art criterion space frameworks (e.g., the $\epsilon$-constraint method and the balanced-box method), as well as a generalized branch-and-price algorithm. Numerical results on both bi-objective DARP and E-ADARP instances demonstrate the effectiveness of the proposed framework. With our proposed approaches, 21 out of 38 instances are solved optimally, where small-to-medium-sized instances are solved within seconds. On larger-scale instances, especially those requiring high battery end levels are computationally challenging to solve, our approaches provide high-quality approximations of the Pareto frontiers. Efficient solutions with varying energy restrictions are compared and we obtain valuable managerial insights for different kinds of service providers.
\end{abstract}

\begin{keyword}
Dial-a-Ride Problem \sep Electric Autonomous Vehicles \sep Bi-objective optimization
\end{keyword}

\end{frontmatter}

\section{Introduction}
More and more cities integrate ride-sharing or pooling services into their public transportation networks. Since rides are pooled and not served individually, like in a standard taxi-type setting, these services can be offered at a lower cost. However, when put into operation, they also need to be attractive (enough) from a user perspective. A possible way to measure user inconvenience in the context of ride-pooling is the length of a user's detour in comparison to a direct (taxi-type) ride. So, from an operator perspective, there are two objectives to consider: the costs of the service and the total excess user ride time, which measures this detour. When the electric autonomous dial-a-ride problem (E-ADARP) was first introduced by \cite{bongiovanni2019electric}, the authors already had these two objectives in mind and combined them in a weighted-sum objective function. In this work, we now consider them as two separate objectives that we optimize concurrently. Since the two goals are conflicting in the sense that lower excess user ride times usually lead to higher costs and vice versa, there will be no single solution that is optimal for both objectives. On the contrary, instead of one solution, a set of solutions will be optimal, each representing a different trade-off. The solutions forming this set are usually called efficient solutions, and their image in objective space is usually referred to as the Pareto frontier. Our goal is to compute this frontier and at least one pre-image (or efficient solution) per non-dominated point.

In order to address the bi-objective electric autonomous dial-a-ride problem (BO-EADARP), we build on the idea of battery-restricted fragments \citep{su2023deterministic} and fragment-based graph \citep{su2024branch}.
In the fragment-based representation of paths, an E-ADARP route is regarded as a concatenation of fragments (represented by a sequence of pickup and drop-off nodes), visits to charging stations (if required), and arcs from/to depots. Using pre-computed fragments, each associated with all possible minimum excess-user-ride-time schedules, allows for the exact computation of the minimum excess user ride times per route (as proved in \cite{su2023deterministic}). Furthermore, since a charging operation is only possible whenever there are no users on board the vehicle, visits to charging stations can only be planned before or after a fragment. In the branch-and-price algorithm of \citet{su2024branch} for the single-objective E-ADARP, this can be efficiently handled in the pricing step by keeping track of all possible trade-offs between the amount that can be recharged and the time necessary to do so \citep{desaulniers2016exact}. 
In this work, we solve the BO-EADARP, where the total travel time and the total excess user ride time are treated as separate objectives. Our work offers the following contributions: 
\begin{enumerate}
    \item Building on the ideas of fragments that can be concatenated to form routes, we develop a novel formulation of the BO-EADARP and a tailored objective-space search framework (the \SOS algorithm). This framework is based on a smart ``select-and-check" process and appears to be the first exact algorithm that produces the trade-off frontier between total travel time and (excess) user ride time for the E-ADARP.
    \item Several enhancement techniques (including a tightened formulation) are employed to enforce the computational efficiency of the proposed framework.
    \item Our proposed framework is benchmarked against available frameworks for multi-objective optimization, such as the well-known $\epsilon$-constraint approach, as well as two more recently developed frameworks \citep{boland2015criterion,parragh2019branch}. Extensive numerical experiments highlight the efficiency of the proposed framework.
    \item Managerial insights for different kinds of service providers are obtained from analyzing Pareto-optimal solutions.
\end{enumerate}

This paper is organized as follows. We first review the relevant literature in Section~\ref{sec::literature}. Then, we present a detailed description and a mathematical formulation of the BO-EADARP in Section \ref{sec::problem}, along with important notions our solution approaches will build upon.
In Section \ref{sec::Fragment-based checker algorithm}, we introduce a novel formulation of the BO-EADARP and propose an exact objective-space search algorithm to solve the BO-EADARP as well as several enhancements.
In Section \ref{sec::computational results}, we conduct extensive experiments on both bi-objective DARP and BO-EADARP benchmark instances and demonstrate the computational efficiency of the proposed algorithm. With the obtained Pareto frontiers, managerial insights are obtained for different types of service providers. The paper ends with Section \ref{sec::conclusion and extensions}, summarizing results, contributions, and outlining possible future research directions.

\section{Literature review}
\label{sec::literature}
The BO-EADARP can be formulated as a multiobjective mixed integer linear programming (MOMIP) problem. 
Exact algorithms for addressing these types of problems can be roughly divided into two classes: criterion space search algorithms  \citep[e.g.,][]{haimes1971bicriterion, chalmet1986algorithm,ralphs2006improved,boland2015criterion} that work in the space of objective function values, and decision space search algorithms \citep[e.g.][]{mavrotas1998branch,sourd2008multiobjective,vincent2013multiple, stidsen2014branch,parragh2019branch} which work in the decision space and are usually generalizations of single-objective branch-and-bound approaches. In the following, we first briefly review the literature on criterion and decision space search algorithms relevant to this work. Thereafter, we give an overview of existing algorithms for solving bi-objective DARPs. To the best or knowledge, none of them addresses the BO-EADARP.

\subsection{Criterion space search algorithms} 
Criterion space search algorithms work in the objective space. Pareto optimal solutions are found in the process of solving a sequence of single-objective problems with single-objective algorithms \citep[e.g.,][]{boland2015criterion}. One of the most popular criterion space search algorithms to handle bi-objective optimization problems is the $\epsilon$-constraint method, which was first introduced by \cite{haimes1971bicriterion}. The main idea of the $\epsilon$-constraint method is to keep one objective as the objective function and to formulate the other objective as a constraint bounded by $\epsilon$ values. In every iteration, the resulting single-objective problem is solved and the $\epsilon$ values are updated to improve the quality of the solution in terms of the second objective function. 
This method has the advantage that it is easy to implement 
while being very effective, and has therefore been widely applied in the literature to solve bi-objective integer programs  \citep[e.g.,][]{TRICOIRE20121582,parragh2009heuristic}. Another method to compute all non-dominated points is called the two-phase method. 
The two-phase method was originally introduced by \cite{ulungu1993optimisation} to solve a bi-objective assignment problem by dividing the search for non-dominated points in criterion space into two phases. The first phase of the two-phase method corresponds to finding all extreme supported efficient points. Usually, the weighted-sum algorithm (also called ``dichotomic method") of \cite{aneja1979bicriteria} is used in this stage, which works in the criterion space. After finding all extreme supported efficient solutions, the second phase aims to find other efficient solutions (including non-extreme supported efficient and non-supported efficient solutions) in the triangles defined by two consecutive extreme supported efficient solutions. More recently, \cite{boland2015criterion} introduced a generic criterion space search algorithm, called the balanced box method. It generates all non-dominated points of a bi-objective integer program and the solutions constituting them, by iteratively exploring rectangle search areas defined by the images of newly found efficient solutions in the criterion space. 
One of its advantages is that, when working on a limited time budget, a representation of the entire Pareto frontier is usually available upon termination, while, e.g., the $\epsilon$-constraint method enumerates solutions from one end of the Pareto frontier to the other and when stopped early, only solutions covering a small portion of the Pareto frontier may have been computed. 
More recently, \cite{glize2022varepsilon} proposed a tailored $\epsilon$-constraint column generation-and-enumeration algorithm. Five acceleration mechanisms are introduced to speed up the solving process of the column generation-and-enumeration algorithm for each single-objective problem. The resulting approach is shown to be more efficient than the bi-objective branch-and-price algorithm proposed by \cite{parragh2019branch} in solving the Bi-Objective Team Orienteering Problem  with Time Windows (BOTOPTW). The latter technique relies on a straightforward implementation of the pricing step. The method of \cite{parragh2019branch} was the first exact method that solved the BOTOPTW. 
It falls into the category of decision space search algorithms.

\subsection{Decision space search algorithms} 
Decision space search algorithms work in the space of decision variables. These algorithms can be regarded as generalizations of Branch-and-Bound (B\&B) algorithms in the context of MOMIP, where usually a bound set instead of a single numerical value is generated when solving a node of the branch-and-bound tree. As bounding procedures are the key ingredient of B\&B algorithms, state-of-the-art algorithms mainly focus on this element to enhance their overall efficiency. \citet{mavrotas1998branch, mavrotas2005multi} provided B\&B algorithms to find efficient solutions for multi-objective mixed 0-1 problems, where they fathom a node if the ideal point of the node problem (they work on a maximization problem) is dominated by some elements in the lower bound set that is composed of non-dominated points. 
\cite{vincent2013multiple} enhance the bounding procedure of \cite{mavrotas1998branch, mavrotas2005multi} by comparing to the bound set in each node of the B\&B tree instead of to the ideal point.
\cite{sourd2008multiobjective} define a separating hyperplane between upper and lower bound sets to judge whether a node can be fathomed in a general B\&B framework. 
Another idea for a bounding procedure in state-of-the-art B\&B algorithms \citep[e.g.][]{masin2008diversity} is to calculate a single lower bound with a surrogate objective function and decide whether the B\&B node can be fathomed with regards to the numerical value of this lower bound. 
Building on all previous results, \cite{stidsen2014branch} propose a B\&B algorithm that can compute the Pareto frontier (and one solution per non-dominated point) for a certain class of BOMIP problems, i.e., those where the Pareto frontier consists of points only. This is the case whenever the continuous variables appear in at most one objective function, like it is the case for the BO-EADARP. In the bounding procedure, \cite{stidsen2014branch} propose to partition the objective space into slices (this is done by adding constraints), then to explore each slice independently. They observe that this strategy allows more nodes to be fathomed and does not significantly increase the computational time. Also, they propose to improve the branching procedure (called ``Pareto branching") by using bounds from enumerated feasible solutions to create nodes. 
Extending the idea of Pareto branching from \cite{stidsen2014branch}, \cite{parragh2019branch} introduced a generic B\&B algorithm based on a problem-independent branching rule to address, among other benchmark problems, the BOTOPTW. To solve the BOTOPTW, they calculate the lower bound set at each node of the B\&B search tree by column generation. Several improvements derived from the integrality of objective functions are proposed to accelerate the algorithm.
In the computational experiments, the proposed B\&B algorithm is compared to several criterion space search algorithms (i.e., the algorithms of \cite{haimes1971bicriterion} and \cite{boland2015criterion}) 
and it proved to be the most efficient method among all considered algorithms in solving the BOTOPTW. Therefore, we also tailor their framework to the BO-EADARP to benchmark our approach.

\subsection{Literature on solving the bi-objective DARP}
In the context of the bi-objective DARP, a few works have been conducted to analyze the trade-off between user inconvenience and operational cost. In the work of \cite{parragh2009heuristic}, a two-phase heuristic method is developed. A set of efficient solutions is constructed, minimizing a weighted sum of total distance traveled and mean user ride time for different weight combinations, using a variable neighborhood search algorithm. It is combined with a path relinking step to obtain additional points along the approximate Pareto frontier. To validate the performance of the proposed heuristic, the $\epsilon$-constraint method is used to generate the optimal Pareto frontier on small instances. The proposed heuristic two-phase method is shown to be efficient in generating high-quality approximation. Following \cite{parragh2009heuristic}, \cite{parragh2011introducing} further considered a weighted-sum objective function consisting of the total routing cost and waiting time for all vehicles with passengers aboard. As in this paper, the objective function included a time-dependent criterion. Medium-sized instances with up to 4 vehicles and 40 customers can be solved. \cite{paquette2013combining} integrate tabu search with a reference point method, computing distances to an ideal point over all objectives. A set of supported efficient solutions is constructed by applying a weighted-sum objective function in which the search is guided by adapting weights dynamically. Total routing cost, user waiting time, and user ride time are minimized. More recently, in the work of \cite{molenbruch2017multi}, the total user ride time is set as the second objective, and a new scheduling heuristic is proposed to construct the schedule that minimizes the total user ride time. A variable neighborhood descent algorithm is designed and integrated into the multi-directional local search framework \citep{tricoire2012multi}. The problem tackled by \cite{molenbruch2017multi} is characterized by a combination of restrictions, which prevent some users from being transported together and a limitation on the set of drivers to which users may be assigned. While \citet{stallhofer2025event} evaluated several different weight settings in the context of the E-ADARP, to obtain a small set of trade-off solutions, in this work, we aim at generating the complete Pareto frontier of the BO-EADARP. 

\section{The BO-EADARP}\label{sec::problem}

In the following, we formally introduce the BO-EADARP. We then provide all necessary concepts and definitions to deal with the bi-objective nature of the problem, and we present the notion of battery-restricted fragment.

\subsection{Problem definition}
 The problem is defined on a complete directed graph $G=(V,A)$, where $V$ denotes the set of vertices and $A$ denotes the set of arcs, i.e., $A = \{(i,j):i,j \in V, i \neq j\}$. The set $V$ is composed of the set of all pickup and drop-off locations $N$, the set of recharging stations $S$, and the set of origin $O$ and destination depots $F$, i.e., $V= N \cup S \cup O \cup F$. 
 Set $N = P \cup D$, where $P =\{1,\cdots,i,\cdots,n\}$ is the set of pickup vertices (which is equivalent to the set of users) and $D =\{n+1,\cdots,n+i,\cdots,2n\}$ the set of all drop-off vertices. Each transportation request $i \in P$ consists of a vertex pair $(i,n+i)$, where $i$ is the pickup vertex and $n+i$ the corresponding drop-off vertex. 
 To ensure a certain service quality, a maximum ride time $m_i$ is also associated with each request $i$. Furthermore, a time window $[e_i,l_i]$ is defined for each node $i\in V$, where $e_i$ and $l_i$ represent the earliest and latest time at which the vehicle may start its service, respectively. A load $q_i$ and a service duration $s_i$ is also given for each node $i \in V$, where $q_i$ is positive on the pickup node $i \in P$ and negative on the corresponding drop-off node $n+i$ and we have $q_{n+i} = -q_i$. For all other nodes $j \in O \cup F \cup S$, $q_j$ and $s_j$ are equal to zero.

Each vehicle $k \in K$ must start at an origin depot $o \in O$ and end at a destination depot $f \in F$. In this paper, we stick to the original setting defined in \cite{bongiovanni2019electric}, where the number of origin depots is equal to the number of vehicles, i.e., $|O| = |K|$ while the number of destination depots might be larger (i.e., $|F| \geqslant |O|$). This means we denote by $o_k$ the origin depot of vehicle $k$. A feasible route in the BO-EADARP is defined as a path in graph $G$ starting from the origin depot and ending at a destination depot, passing through pickup, drop-off, and charging station (if required) locations, satisfying pairing, precedence, load, battery, time window, and maximum user ride time constraints. The BO-EADARP consists of designing $|K|$ routes, one for each vehicle, such that all customer nodes are visited exactly once, each recharging station and destination depot is visited at most once, and the two objectives are minimized. Vehicles are assumed to be homogeneous in terms of the number of seats available $C$ and their battery capacities $Q$.

 Let $t_{i,j}$ and $b_{i,j}$ denote the travel time and the battery consumption on each arc $(i,j) \in A$. We assume that battery discharging and recharging are proportional to travel time and charging time, respectively. The recharging rate is denoted as $\alpha$. Energy units are converted to time units by defining $h_{i,j} = b_{i,j}/\alpha$. Partial recharging is allowed. Additionally, a minimum battery level $\gamma Q$ must be respected when reaching a destination depot. The triangle inequality is assumed to hold for $t_{i,j}$ and $b_{i,j}$. 
 In order to formulate the BO-EADARP, following \citet{bongiovanni2019electric}, we use binary variables $x_{ij}^k$ which are equal to $1$ if arc $(i,j)$ is used by vehicle $k$ and $0$ otherwise. Furthermore, we use continuous variables $T_i^k$ to store the beginning of service time at a node $i$ by vehicle $k$, $L_i^k$ for the load when leaving node $i$ with vehicle $k$, $B_i^k$ to keep track of the battery level at each node $i$ and vehicle $k$, $E_s^k$ for the time available for recharging at charging station $s$ and vehicle $k$, and $R_i$ to store the excess ride time of user request $i$.

\begin{align} \label{EQsec3:objective1}
     & \min \sum\limits_{k \in K} \sum\limits_{(i,j) \in A}t_{i,j}x_{i,j}^k \\
\label{EQsec3:objective2}
    & \min \sum\limits_{i \in P}R_i 
\end{align}
subject to:
\begin{align} 
\label{EQsec3:start}
 & \sum\limits_{j \in P\cup S \cup F} x_{o^k,j}^{k} = 1, 
 && \forall k \in K
 \\
\label{EQsec3:end}
& \sum\limits_{i \in D\cup S \cup \{o^k\}}\sum\limits_{j \in F} x_{i,j}^{k} = 1, 
&& \forall k \in K
\\ 
\label{EQsec3:chargevisit}
& \sum\limits_{k \in K} \sum\limits_{i \in D\cup S \cup \{o^k\}} x_{i,j}^{k} \leqslant 1, 
&&\forall j \in F \cup S
\\ 
\label{EQsec3:inout}
& \sum\limits_{j \in V, j \neq i} x_{i,j}^{k} - \sum\limits_{j \in V, j \neq i} x_{j,i}^{k} = 0, 
&&\forall k \in K, i \in N \cup S
\\
\label{EQsec3:visitPickup}
& \sum\limits_{k \in K} \sum\limits_{i \in N, j\neq i} x_{i,j}^{k} = 1, 
&& \forall i \in P
\\
\label{EQsec3:delivery}
& \sum\limits_{j \in N, j \neq i} x_{i,j}^{k} - \sum\limits_{j \in N, j \neq n+i} x_{j,n+i}^{k} = 0, 
&& \forall k \in K, i \in P
\\ 
\label{EQsec3:precedence}
& T_i^k + s_i + t_{i,n+i} \leqslant T_{n+i}^k, 
&&\forall k \in K, i \in P
\\ 
\label{EQsec3:tw}
& e_i \leqslant T_i^k \leqslant l_i, 
&& \forall k \in K, i \in V
\\ 
\label{EQsec3:time}
& T_i^k + t_{i,j} + s_i - M_{i,j}(1-x_{i,j}^k) \leqslant T_j^k, 
&& \forall k \in K, (i,j) \in A 
\\ 
\label{EQsec3:ride}
& T_{n+i}^k - T_{i}^k - s_i \leqslant m_i, 
&& \forall k \in K, i \in P
\\ 
\label{EQsec3:excess}
& R_i \geqslant T_{n+i}^k - T_{i}^k - s_i - t_{i,n+i}, 
&& \forall k \in K, i \in P
\\ 
\label{EQsec3:load1}
& L_i^k + q_j  - G_{i,j}^k(1-x_{i,j}^k) \leqslant L_j^k, 
&& \forall k \in K, (i,j) \in A 
\\ 
\label{EQsec3:load2}
& L_i^k + q_j + G_{i,j}^k(1-x_{i,j}^k) \geqslant L_j^k, 
&& \forall k \in K, (i,j) \in A 
\\ 
\label{EQsec3:loadLB}
& L_i^k \geqslant \max\{0,q_i\}, 
&& \forall k \in K, i \in N
\\ 
\label{EQsec3:loadUB}
& L_i^k \leqslant \min\{C,C+q_i\}, 
&& \forall k \in K, i \in N
\\ 
\label{EQsec3:loadZero}
& L_i^k = 0, 
&& \forall k \in K, i \in \{o^k\} \cup F \cup S
\\ 
\label{EQsec3:SoCZero}
& B_{o^k}^k = B_0^k, 
&& \forall k \in K
\\ 
\label{EQsec3:SoC1}
& B_j^k \leqslant B_i^k - b_{i,j} + Q(1-x_{i,j}^k), 
&& \forall k \in K, i \in V \setminus S, j \in V  \setminus \{o^k\}, i \neq j
\\ 
\label{EQsec3:SoC2}
& B_j^k \geqslant B_i^k - b_{i,j} - Q(1-x_{i,j}^k), 
&& \forall k \in K, i \in V \setminus S, j \in V  \setminus \{o^k\}, i \neq j
\\
\label{EQsec3:SoC3}
& B_j^k \leqslant B_s^k + \alpha E_s^k - b_{s,j} + Q(1-x_{s,j}^k), 
&& \forall k \in K, s \in S, j \in P \cup F \cup S, s \neq j
\\ 
\label{EQsec3:SoC4}
& B_j^k \geqslant B_s^k + \alpha E_s^k - b_{s,j} - Q(1-x_{s,j}^k), 
&& \forall k \in K, s \in S, j \in P \cup F \cup S, s \neq j
\\ 
\label{EQsec3:SoC5}
& Q \geqslant B_s^k + \alpha E_s^k, 
&& \forall k \in K, s \in S
\\ 
\label{EQsec3:SoC6}
& B_i^k \geqslant \gamma Q, 
&& \forall k \in K, i \in F
%
\\
\label{EQsec3:timeCharge1}
 &   E_s^k \leqslant T_i^k - t_{s,i} - T_s^k + \tilde M_{s,i}^k\left(1-x_{s,i}^{k}\right), 
 && \forall s \in S, i \in P\cup{S}\cup{F}, k\in K, i \ne s
\\ 
\label{EQsec3:timeCharge2}
&    E_s^k \geqslant T_i^k - t_{s,i} - T_s^k - \tilde M_{s,i}^k\left(1-x_{s,i}^{k}\right), 
&& \forall s \in S, i \in P\cup{S}\cup{F}, k\in K, i \ne s
\\ 
\label{sec3: cons26}
&    x_{i,j}^k \in \{0,1\}, 
&& \forall k \in K, (i,j) \in A 
\\ 
\label{sec3: cons27}
&    B_i^k \geqslant 0, 
&& \forall k \in K, i \in V
\\ 
\label{sec3: cons28}
&    E_s^k \geqslant 0, 
&& \forall k \in K, s \in S
\end{align}

Objective function~\eqref{EQsec3:objective1} minimizes the total travel time, and objective function~\eqref{EQsec3:objective2} minimizes the total excess user ride time. Constraints~\eqref{EQsec3:start} and ~\eqref{EQsec3:end} make sure that each vehicle leaves from its depot and returns to one of the end depots. Constraints~\eqref{EQsec3:chargevisit} guarantee that each charging station is visited at most once. Flow conservation is taken care of by~\eqref{EQsec3:inout}.   Constraints~\eqref{EQsec3:visitPickup}--\eqref{EQsec3:precedence} guarantee that each pickup node is visited exactly once, that the same vehicle that visits pickup node $i$ also visits drop-off node $n+i$, and in the correct order. 
Constraints~\eqref{EQsec3:tw} and \eqref{EQsec3:time} take care of time windows and of correctly setting the beginning of service time variables. 
Maximum user ride time restrictions and the correct computation of the excess user ride times are handled by \eqref{EQsec3:ride} and \eqref{EQsec3:excess}. Constraints \eqref{EQsec3:load1} -- \eqref{EQsec3:loadZero} make sure that the vehicle capacity (seats available) is respected at all times. The remaining constraints handle battery-related restrictions. This requires the correct update of the state-of-charge variables (SoC) at pickup and drop-off nodes (see constraints \eqref{EQsec3:SoC1} and  \eqref{EQsec3:SoC2}) and after visiting a charging station (see constraints \eqref{EQsec3:SoC3} and  \eqref{EQsec3:SoC4}), where the latter work in combination with constraints \eqref{EQsec3:timeCharge1} and \eqref{EQsec3:timeCharge2}, which compute the time available to recharge during a visit at a charging station. Finally, constraints \eqref{EQsec3:SoC5} and \eqref{EQsec3:SoC6} make sure that the battery capacity and the minimum battery level at the depot are respected.

Tight big-M values $M_{i,j} = l_i + t_{ij} + s_i$ for each arc $(i,j) \in A$ are defined to make sure that \eqref{EQsec3:time} are only binding when $x_{ij}^k = 1$.
Similarly, we take $G_{i,j}^k = \max\{C^k,C^k+l_i\}$ for ~\eqref{EQsec3:load1} and ~\eqref{EQsec3:load2} and $\tilde M_{s,i}^k = T_p$ for ~\eqref{EQsec3:timeCharge1} and ~\eqref{EQsec3:timeCharge2}. 

\subsection{Preliminaries in bi-objective optimization} \label{sec::preliminary in bi-objective context}
In order to deal with the bi-objective nature of the problem, we resort to the so-called Pareto approach. This means that we concurrently optimize both objectives, and we aim at generating the Pareto frontier and the solutions constituting this frontier.
Let $\mathcal{X}$ denote the set of all feasible solutions and $z(x)$ the mapping of a solution $x \in \mathcal{X}$ into the criterion space. 
We now more formally introduce all necessary concepts, relying on the definitions of 
\cite{przybylski2008two}.


\begin{defn}[efficient solution]\label{BO defn2}
A feasible solution $x^* \in \mathcal{X}$ is called \textit{efficient (or Pareto optimal)} if there does not exist any other feasible solution $x \in X$ such that $z_k(x) \leqslant z_p(x^*), k = 1,2$, with at least one strict inequality. $z(x^*)$ is called a \textit{non-dominated} point. The set of efficient solutions is denoted as $\mathcal{X}_E$, and $\mathcal{Y}_N := \{z(x): x \in \mathcal{X}_E\}$ is called the Pareto frontier or efficient frontier.
\end{defn}

After defining efficient solutions $\mathcal{X}_E$, we partition $\mathcal{X}_E$ into the following two sets:
\begin{defn}[supported efficient solutions]
    A feasible solution $x$ is called a \textit{supported efficient solution} if $x$ is an optimal solution of the weighted-sum problem:
    \begin{equation}
    \displaystyle{\min_{x \in \mathcal{X}} \lambda z_1(x) + (1-\lambda)z_2(x)}\\
    \end{equation}
    where $0 < \lambda < 1$.
\end{defn}

\begin{defn}[non-supported efficient solutions]
    A feasible solution $x$ is called a \textit{non-supported efficient solution} if $x$ is not an optimal solution to a weighted-sum problem under any weight combination.
\end{defn}

We use $\mathcal{X}_{SE}$ to denote the set of supported efficient solutions, and the image of $\mathcal{X}_{SE}$ in the objective space is called the set of \textit{supported non-dominated points}, denoted as $\mathcal{Y}_{SN}$. The set of non-supported efficient solutions is denoted as $\mathcal{X}_{NE}$ and its image in the objective space is called the set of \textit{non-supported non-dominated points} and is denoted as $\mathcal{Y}_{NN}$. 


\begin{observation}
All non-supported non-dominated points are located in the interior of $conv(Y+\mathbb{R}_{\geq}^p)$, where $p$ gives the number of objectives, $p=2$ in our case. All supported non-dominated points are points of $conv(Y+\mathbb{R}_{\geq}^p)$, where $\mathbb{R}_{\geq}^p := \{y \in \mathbb{R}^p: y \geq 0\}$ is the non-negative orthant of $\mathbb{R}^p$ 
and $conv(\mathcal{Y})$ is the convex hull of feasible set $\mathcal{Y}$ in the objective space.
\end{observation}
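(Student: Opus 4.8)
Write $\mathcal{Y} := \{z(x) : x \in \mathcal{X}\}$ for the image of the feasible set (the ``$Y$'' of the statement) and abbreviate $C := conv(\mathcal{Y}+\mathbb{R}_{\geq}^p)$, a full-dimensional convex set that contains $\bar y + \mathbb{R}_{\geq}^p$ for every $\bar y \in \mathcal{Y}$. The plan is to read ``being a point of $C$'' as ``lying on the boundary $\partial C$'' and to connect $\partial C$ with the weighted-sum scalarizations from the definition of supported solutions: a weight $\lambda \in (0,1)$ corresponds to the strictly positive linear functional $\ell_\lambda(y) = \lambda y_1 + (1-\lambda) y_2$. Two elementary facts drive everything. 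First, for any linear functional $\ell$ with non-negative coefficients one has $\min_{y \in \mathcal{Y}} \ell(y) = \min_{y \in C} \ell(y)$, since adding a vector of $\mathbb{R}_{\geq}^p$ cannot decrease $\ell$ and passing to a convex hull cannot decrease the minimum of a linear functional. Second, a non-zero linear functional attains its minimum over a convex set only on the boundary, because around an interior point there is a small ball on which the functional is non-constant.

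\emph{Supported non-dominated points lie on $\partial C$.} Let $x \in \mathcal{X}_{SE}$ be optimal for the weighted-sum problem with some $\lambda \in (0,1)$, so $z(x)$ minimizes $\ell_\lambda$ over $\mathcal{Y}$. By the first fact, $z(x)$ then minimizes $\ell_\lambda$ over $C$ as well, and since $\ell_\lambda$ is not the zero functional, the second fact yields $z(x) \in \partial C$; equivalently, $\{y : \ell_\lambda(y) = \ell_\lambda(z(x))\}$ is a supporting hyperplane of $C$ at $z(x)$. As $z(x)$ is in addition non-dominated, it lies on the ``lower-left'' efficient part of $\partial C$, which is what ``$z(x)$ is a point of $conv(\mathcal{Y}+\mathbb{R}_{\geq}^p)$'' is meant to express.

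\emph{Non-supported non-dominated points lie in $\mathrm{int}(C)$.} Let $x \in \mathcal{X}_{NE}$ and $\bar y := z(x) \in \mathcal{Y} \subseteq C$; suppose for contradiction that $\bar y \notin \mathrm{int}(C)$, hence $\bar y \in \partial C$. By the supporting hyperplane theorem there is $w \neq 0$ with $\langle w, y - \bar y \rangle \geq 0$ for all $y \in C$; testing this on $\bar y + \mathbb{R}_{\geq}^p \subseteq C$ forces $w \geq 0$ componentwise. If $w$ can be chosen with both components strictly positive, normalize $w_1 + w_2 = 1$ and set $\lambda := w_1 \in (0,1)$; then $\bar y$ minimizes $\ell_\lambda$ over $C \supseteq \mathcal{Y}$, so $x$ is optimal for the corresponding weighted-sum problem, contradicting $x \in \mathcal{X}_{NE}$. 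Therefore $\bar y \in \mathrm{int}(C)$.

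The main obstacle is the degenerate case left open in the previous paragraph: the supporting normal $w$ might be forced to have a zero component (e.g.\ $w=(1,0)$), which geometrically means $\bar y$ is a lexicographic minimum of the two objectives among efficient points. One must then still exhibit a strictly positive supporting normal at $\bar y$, i.e.\ show such a $\bar y$ is in fact supported. This holds here because $C$ is polyhedral (the objective-value set relevant to the frontier is finite, consistent with the frontier consisting of points only) with recession cone $\mathbb{R}_{\geq}^p$, so the normal cone of $C$ at any non-dominated boundary point is full-dimensional and hence meets the open positive orthant; equivalently, the perturbation $w \mapsto w + \varepsilon(1,1)$ still supports $C$ at $\bar y$ for $\varepsilon$ small, because the relevant objective values are bounded. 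Since this technicality is exactly what is settled within the framework of \citet{przybylski2008two} that we adopt, I would invoke their results for it; granting that, the two inclusions $\mathcal{Y}_{SN} \subseteq \partial C$ and $\mathcal{Y}_{NN} \subseteq \mathrm{int}(C)$ established above are the claimed statement.
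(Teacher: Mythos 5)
The paper offers no proof of this Observation: it is stated as a known fact of bi-objective optimization, recalled together with the definitions of \cite{przybylski2008two}, so there is no in-paper argument to compare yours against. Your proof is the standard supporting-hyperplane argument and is essentially sound: reading ``points of $conv(\mathcal{Y}+\mathbb{R}_{\geq}^p)$'' as boundary points, weighted-sum optimality of a supported point transfers from $\mathcal{Y}$ to $C=conv(\mathcal{Y}+\mathbb{R}_{\geq}^p)$, and a nonzero nonnegative functional cannot attain its minimum at an interior point of the full-dimensional set $C$; conversely, a supporting normal $w$ at a non-dominated boundary point satisfies $w\geq 0$, and if it can be taken strictly positive the point is supported. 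The one shaky spot is your handling of the degenerate case $w=(1,0)$ or $(0,1)$: the blanket claims that the normal cone at any non-dominated boundary point is full-dimensional and that $w+\varepsilon(1,1)$ still supports $C$ at $\bar y$ are false when $\bar y$ lies in the relative interior of a vertical (or horizontal) face of $C$. The clean way to close this is to observe that such a $\bar y$ cannot be non-dominated: the lower (resp.\ left) endpoint of that face is an extreme point of $C$, hence---since $C$ is polyhedral with recession cone $\mathbb{R}_{\geq}^p$ in this setting---an element of $\mathcal{Y}$, and it dominates $\bar y$, a contradiction; at all remaining non-dominated boundary points the normal cone does meet the open positive orthant. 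With that replacement (or with your explicit appeal to \cite{przybylski2008two}, where this fact is settled), the two inclusions you establish give exactly the stated Observation.
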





\subsection{Battery-restricted fragment} \label{sec::fragment}
As proposed by \cite{su2023deterministic}, we rely on the notion of battery-restricted fragments (Definition \ref{fragment}).
A battery-restricted fragment has two key properties:
\begin{enumerate}
    \item Each E-ADARP route can be represented as a concatenation of BRFs, optional recharging stations, and the origin and destination depots.
    \item The minimum excess user ride time of a feasible E-ADARP solution depends solely on the minimum excess user ride time of each BRF included in the route (as shown in Theorem 1 of \cite{su2023deterministic}).
\end{enumerate}

\begin{defn}[Battery-restricted fragment, \cite{su2023deterministic}] \label{fragment}
Assuming that $frag = (i_1,i_2, \cdots,i_k)$ is a sequence of pickup and drop-off nodes, where the vehicle arrives empty at $i_1$ and leaves empty at $i_k$ and has passenger(s) on board at other nodes. Then, we call ``$frag$" a \BRFrag if there exists a feasible route of the form:
$$(o,s_{i_1},\cdots,s_{i_v},\overbrace{i_1,i_2, \cdots,i_k}^{frag},s_{i_{v+1}},\cdots,s_{i_m},f),$$ where $s_{i_1},\cdots,s_{i_v},s_{i_{v+1}},\cdots,s_{i_m} (v,m \geqslant 0)$ are recharging stations, and $o \in O$, $f \in F$.
\end{defn}

It should be noted that the route defined in Definition \ref{fragment} may not contain a recharging station. In this case, we have $v=m=0$ and the \BRFrag is equivalent to a fragment defined in \cite{rist2021new}.
Let $i^+$ denote the pickup of user $i$ and $i^-$ its corresponding drop-off. Figure \ref{battery-restricted fragment example} presents an example of a feasible route that consists of two battery-restricted fragments, i.e., $frag_1 = \{1+,2+,1-,2-\}$ and $frag_2 = \{3+,3-\}$. Note that $frag_1 \cup frag_2$ is not a battery-restricted fragment, as the vehicle becomes empty (load = 0) at intermediate node 2-.
Based on Definition \ref{fragment}, it is clear that each E-ADARP route can be regarded as the concatenation of several battery-restricted fragments, recharging stations (if required), an origin depot, and a destination depot.

\begin{figure}[H]
\centering
\includegraphics[width=16cm]{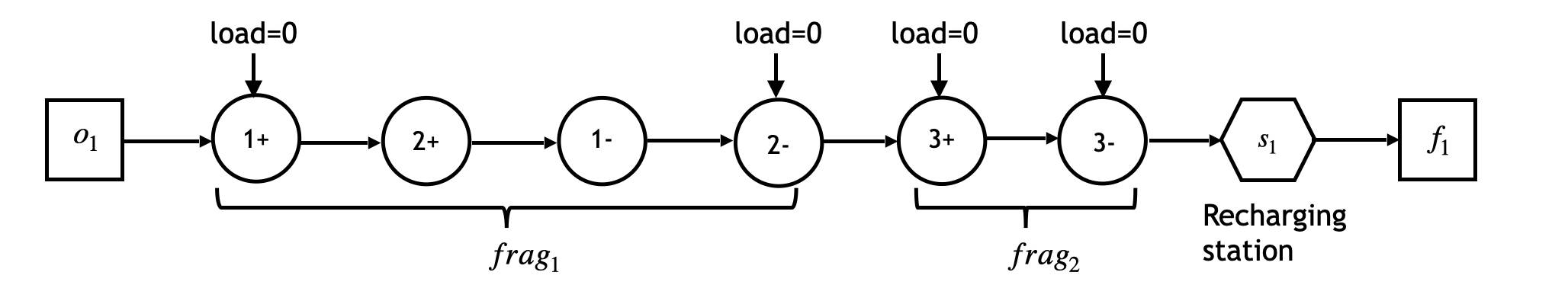}
\caption{\centering Example of \BRFrags}
\label{battery-restricted fragment example}
\end{figure}

\section{Solution approach}
\label{sec::Fragment-based checker algorithm}
In this section, we present a novel criterion-space search algorithm that we call \textit{\SOSfull}, developed to efficiently solve the BO-EADARP by exploiting a key structural unit, namely the Battery-Restricted Fragment (BRF), as introduced in Section \ref{sec::fragment}.

The \SOSfull algorithm is an exact method (see Theorem \ref{exactness of algorithm} at the end of this section) and can be interpreted as a criterion-space search technique that partitions the objective space into distinct \textit{Service Quality Range Regions}. At the core of this approach lies the \textit{\FBCfull} algorithm, which constructs feasible solutions by ``smartly" selecting and concatenating BRFs that satisfy a given service quality range constraint.

We begin by introducing the \FBCfull algorithm in Section \ref{DARP two-stage}, followed by a detailed presentation of the \SOSfull algorithm in Section \ref{sec:SOS}.



\subsection{\FBCfull Algorithm} \label{DARP two-stage}

The \FBCfull algorithm (hereafter, \FBC) is designed to efficiently find a feasible solution within a region of the objective space, referred to as the Service-Quality Range Region \( R(z^t, z^b) \). We named it as ``Service-Quality Range Region" as it contains all points whose service-quality levels lie between $[y_2^b,y_2^t)$. Its definition is given below:

\begin{defn}[Service Quality Range Region]
    Let \( z^t = (y_1^t, y_2^t) \) and \( z^b = (y_1^b, y_2^b) \) be two points in the objective space. The service quality range region \( R(z^t, z^b) \) is defined as:
    \[
    R(z^t, z^b) = \{(z_1, z_2) \mid 0 \leq z_1 < y_1^b, \, y_2^b \leq z_2 < y_1^t \}.
    \]
    This region is illustrated in Figure~\ref{given objective space}.
\end{defn}

\begin{figure}[t]
\centering
\includegraphics[width=6cm]{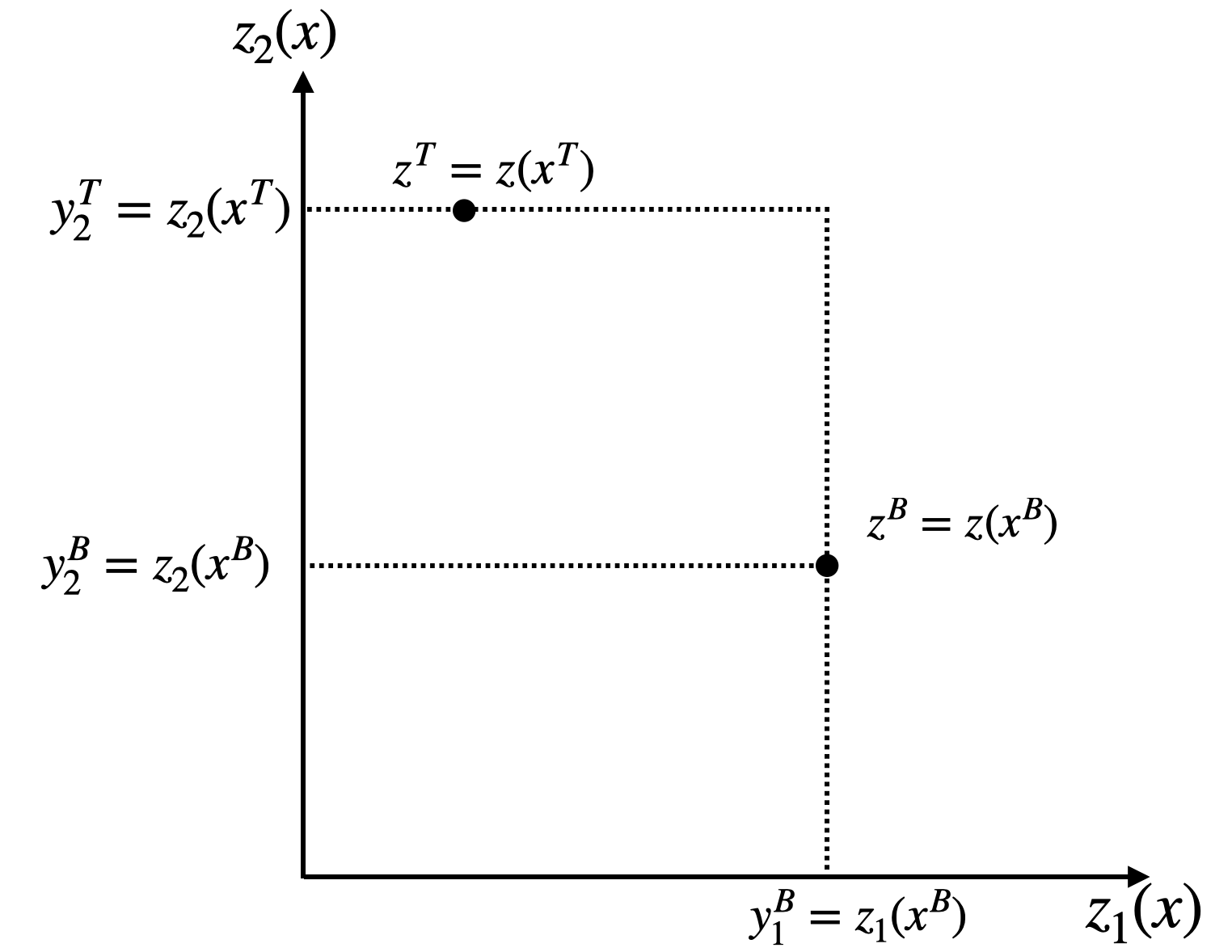}
\caption{\centering Example of a service-quality range}
\label{given objective space}
\end{figure}

Given a region \( R(z^t, z^b) \), the \FBC algorithm identifies a corresponding feasible solution by iteratively executing the following two steps:

\begin{itemize}
    \item \textbf{Step 1: Select compatible BRFs} Here, we select a set of BRFs that together cover all customer requests exactly once within the service quality range.  
This can be done by solving the following knapsack problem to ``put" a set of fragments that can make up a feasible solution into the ``knapsack":

\begin{equation} \label{1.1}
\min \sum\limits_{f \in \mathcal{F}} T_f x_f
\end{equation}
subject to:
\begin{align} \label{1.2}
    \sum \limits_{f \in \mathcal{F}} a_{i,f}x_f & = 1
    && \forall i \in P
    \\
%
\label{1.3}
    \sum \limits_{f \in \mathcal{F}} R_f x_f & < y_2^t
    \\
%
\label{1.4}
    \sum \limits_{f \in \mathcal{F}} R_f x_f & \geq y_2^b
    \\
%
\label{cuts on fragments}
    \sum \limits_{f \in \xi} x_f & \leq |\xi| - 1 
    && \xi \in \Xi
    \\
%
\label{1.6}
    x_f & \in \{0,1\}
    && \forall f \in \mathcal{F}
\end{align}

In this model, $x_f$ is a binary variable that is equal to one if fragment $f$ is selected and zero otherwise. We denote $\mathcal{F}$ as the set of all feasible fragments, which are generated by applying the fragment enumeration process in \cite{su2023deterministic,su2024branch}. The objective is to minimize the sum of total travel time on selected fragments, where $T_f$ denotes the travel time on fragment $f$. We use $R_f$ to denote the excess user ride time of fragment $f$. In Constraints (\ref{1.2}), we make sure that the selected fragments cover all the requests, and each request is visited exactly once. $a_{i,f}$ is the coefficient which is equal to one if request $i$ is included in fragment $f$ and zero otherwise. Constraints (\ref{1.3}) and (\ref{1.4}) ensure that the total excess user ride time of selected fragments is within the service quality range $[y_2^b,y_2^t)$. Constraints (\ref{cuts on fragments}) are called \textit{``incompatible fragments cuts''}, which build the link between steps 1 and 2. In case that the fragments $f$ in the combination $\xi$ cannot compose a feasible solution or the composed solution is not a new non-dominated candidate (checked in step~2 as described below), a cut is added to avoid selecting the same combination of fragments. $\Xi$ is the set of all incompatible fragment sets found until the current iteration. Finally, Constraints (\ref{1.6}) are the domain constraints for binary variable $x_f$.

\item \textbf{Step 2: Check Feasibility}  After selecting fragments, we need to check whether there exists an E-ADARP solution composed of these fragments that satisfies all constraints (battery, charging, and time windows) while being included in the region $R(z^t, z^b)$. As fragments have been selected in step 1, we can fix all \( x_{i,j} = 1 \) for the arcs within the selected fragments. Then, we solve the original MILP model introduced in Section~\ref{sec::problem}. There are two cases when constructing a solution with fragments:
\begin{itemize}
    \item If a solution \( x \) exists and $z_1(x)< y_1^b$, then a desired feasible solution is constructed using the fragment set generated in \textit{Step 1}.
    \item Otherwise, we rerun \textit{Step 1} with an incompatible fragment cut (i.e., Constraint~\eqref{cuts on fragments}) added to the knapsack problem.
\end{itemize}


It should be noted that in the first case if we can find a solution $x$ using selected fragments in step 1 and $x$ satisfies the condition $z_1(x) < y_1^b$, this solution is actually a candidate for a new efficient solution. The reason is that this solution has $z_2(x) \in [y_2^b,y_2^t)$ and $z_1(x) < y_1^b$.
To improve efficiency, we follow the approach in \cite{su2024branch} 
for
constructing a new graph \( G_{sp} \). On this graph, each fragment is abstracted as a single edge connecting two nodes. This transformation reduces computational time and still guarantees optimality, as proven in Theorem~2 of \cite{su2024branch}.

\begin{exmp}[Transform a fragment to an edge]
    In Figure~\ref{frag-edge representation}, the first route is composed of a fragment $\mathcal{F}$ which includes four nodes (i.e., $\mathcal{F} = \{1+,2+,1-,2-\}$), each associated with a time window. This fragment can be abstracted to an edge between node $1+$ and node $2-$ as shown in the second route.
    \begin{figure}[t]
    \centering
    \includegraphics[width=11cm]{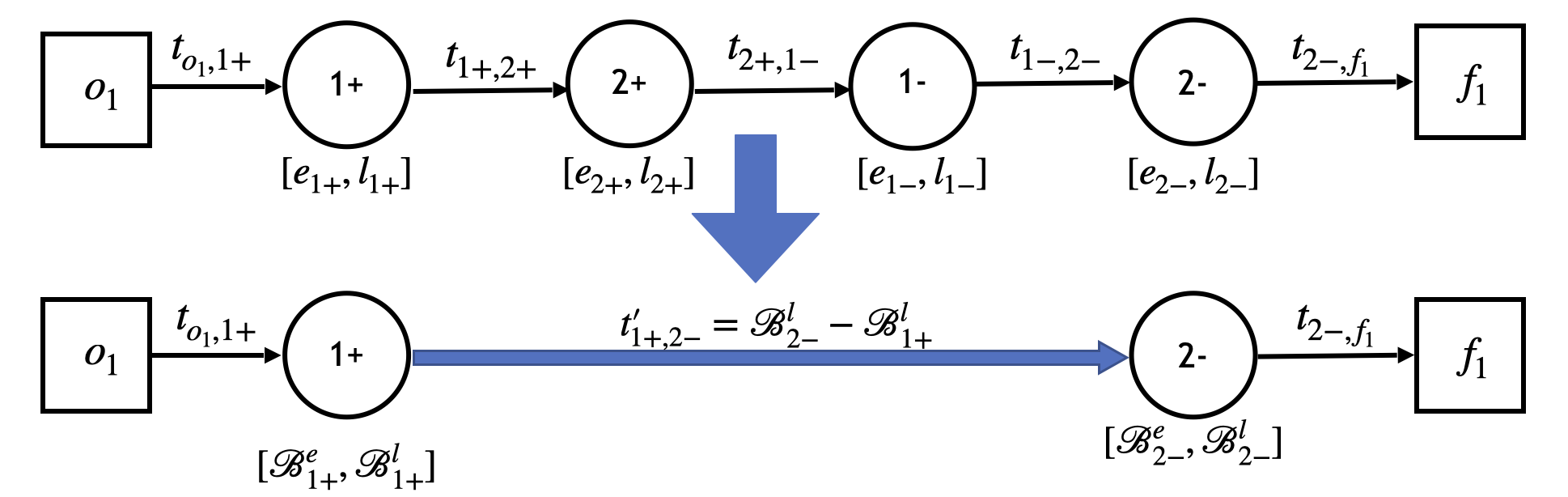}
    \caption{\centering Example of transforming fragment to edge representation}
    \label{frag-edge representation}
    \end{figure}
\end{exmp}

The detailed formulation is presented in \ref{appendix: stage 2 formulation}.

\end{itemize}

\begin{theorem}\label{theo::FBC}
If a feasible solution exists within the region \( R(z^t, z^b) \), then the \FBC algorithm is guaranteed to find a feasible solution in \( R(z^t, z^b) \).
\end{theorem}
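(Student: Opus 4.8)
The plan is to argue by a finiteness/termination argument combined with a correctness (completeness) argument for the two-step loop. First I would set up the state space: at any point in the execution of \FBC, the "memory" of the algorithm is entirely captured by the set $\Xi$ of incompatible fragment sets accumulated so far, which only grows over iterations. Since $\mathcal{F}$ is a finite set (fragments are enumerated once, up front, via the procedure of \cite{su2023deterministic,su2024branch}), there are only finitely many subsets of $\mathcal{F}$, so $\Xi$ can be enlarged only finitely many times. Hence the loop "Step 1 $\to$ Step 2 $\to$ (maybe add a cut and repeat)" must terminate: either Step 1 eventually returns "infeasible" for the knapsack problem \eqref{1.1}--\eqref{1.6}, or Step 2 eventually accepts a selected fragment set and returns a solution.

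Next I would establish the completeness direction, which is the substantive part. Suppose some feasible E-ADARP solution $x$ with $z(x) \in R(z^t, z^b)$ exists; we must show \FBC returns one (not necessarily $x$ itself). The key structural fact is Property 1 of battery-restricted fragments (Definition \ref{fragment} and the surrounding discussion): $x$ decomposes into a concatenation of BRFs, recharging-station visits, and depot arcs. Let $\xi(x) \subseteq \mathcal{F}$ be the set of BRFs appearing in $x$. I would check that $\xi(x)$ is feasible for the knapsack model: constraint \eqref{1.2} holds because each request is served exactly once in $x$ and therefore lies in exactly one BRF of the decomposition; and constraints \eqref{1.3}--\eqref{1.4} hold because, by Property 2 (Theorem 1 of \cite{su2023deterministic}), the minimum total excess user ride time achievable over all E-ADARP solutions using exactly the fragment set $\xi(x)$ equals $\sum_{f \in \xi(x)} R_f$, and this quantity is $\le z_2(x) < y_2^t$ and can be scheduled to be $\ge y_2^b$ — here I would need to be a little careful and phrase the service-quality constraints in terms of the attainable excess-ride-time value of the selected set rather than of a particular schedule, matching how the model and the Step-2 check in $G_{sp}$ actually treat it. The crucial point is then: $\xi(x)$ is never cut off. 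A cut \eqref{cuts on fragments} for a set $\xi$ is added only when $\xi$ "cannot compose a feasible solution" in the required region, but $\xi(x)$ provably can (witnessed by $x$, after possibly re-scheduling to hit the service-quality window); so $\xi(x) \notin \Xi$ at every iteration, hence the Step-1 knapsack is always feasible, hence \FBC never exits via the "infeasible" branch.

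Combining the two directions finishes the argument: \FBC terminates (finiteness of $\Xi$), and when a feasible solution exists in $R(z^t,z^b)$ it cannot terminate by declaring the knapsack infeasible (since $\xi(x)$ remains a valid selection forever), so it must terminate by Step 2 accepting some selected fragment set and returning a solution $x'$ with $z_2(x') \in [y_2^b, y_2^t)$ and $z_1(x') < y_1^b$, i.e. $z(x') \in R(z^t, z^b)$, as claimed.

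I expect the main obstacle to be the completeness direction, specifically the bookkeeping around the service-quality constraints \eqref{1.3}--\eqref{1.4} and the role of $G_{sp}$: one must confirm that (i) the quantity $\sum_{f\in\xi} R_f$ used in the knapsack coincides with the minimum excess user ride time that Step 2's MILP (resp. the $G_{sp}$ reformulation) can certify for fragment set $\xi$ — this is exactly Theorem 1 of \cite{su2023deterministic} together with Theorem 2 of \cite{su2024branch} — and (ii) the Step-2 feasibility check's notion of "a solution exists in $R(z^t,z^b)$" is the same predicate that drives the cut-generation rule, so that a genuinely feasible set like $\xi(x)$ is never erroneously cut. A secondary, more routine obstacle is making the termination bound explicit (it is at most $2^{|\mathcal{F}|}$ iterations, which suffices for correctness though not for efficiency). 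Neither obstacle is deep, but both require stating the invariant "$\xi(x) \notin \Xi$" cleanly and invoking the earlier theorems at the right granularity.
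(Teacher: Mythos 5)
Your proposal is correct and follows essentially the same route as the paper's proof: finiteness of the possible fragment combinations plus the observation that the fragment set underlying any feasible solution in \( R(z^t, z^b) \) remains an admissible (never cut) selection, so Step~2 must eventually succeed. You simply spell out the details the paper leaves implicit (the growth of \( \Xi \) for termination, the invariant that \( \xi(x) \notin \Xi \), and the appeal to Theorem~1 of \cite{su2023deterministic} for constraints \eqref{1.3}--\eqref{1.4}), which is a sound elaboration rather than a different argument.
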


\begin{proof}
Given a range \( R(z^t, z^b) \), 
there is only a finite number of fragment sets that is generated in the first step of the \FBC algorithm. These include those corresponding to all the feasible fragments. Since the fragment set associated with at least one feasible solution is included (if a feasible solution exists), step 2 of the algorithm must be able to construct a feasible solution using this set. Therefore, the \FBC algorithm is guaranteed to identify a feasible solution if one exists in the specified range.
\end{proof}

Notably, both steps of the \FBC algorithm are computationally efficient. The first step reduces to a knapsack problem, while in the second step, most route segments are already determined by the selected fragments. This structure enables our method to achieve substantial computational savings.

However, step 1 may generate many fragment sets that do not lead to feasible solutions or not fall in the desired region, and thus requiring multiple iterations to identify a valid one. To mitigate this issue, we incorporate enhancements into the computation process to avoid selecting unpromising fragment sets. The details of these improvements are presented in Section~\ref{sec::enhancement on algorithm}.

\subsection{\SOSfull Algorithm}\label{sec:SOS}

\begin{algorithm}[htp]
\caption{\SOSfull Algorithm}
\label{alg:sos_algorithm}
\begin{algorithmic}[1]
\Require BO-EADARP problem instance
\Ensure Set of efficient solutions $\mathcal{X}_E$ and non-dominates points $\mathcal{Y}_N$

\State  Solve two objectives in lexicographical order (i,e., $lex\min_{x\in \mathcal{X}}\{z_1(x), z_2(x)\}$ and $lex\min_{x\in \mathcal{X}}\{z_2(x), z_1(x)\}$) to compute efficient solutions $x^T$ and $x^B$, respectively
\State Initialize $z^T \leftarrow (z_1(x^T), z_2(x^T))$ and $z^B \leftarrow (z_1(x^B), z_2(x^B))$
\State $\mathit{Rec} \leftarrow [R(z^T, z^B)]$ \Comment{Initialize rectangle list}
\State $\mathcal{X}_E \leftarrow \{x^T, x^B\}$ \Comment{Initialize efficient solution set}

\While{$\mathit{Rec} \neq \emptyset$}
    \State $R(z^t, z^b) \leftarrow \mathit{Rec}.\text{pop}()$ \Comment{Extract and remove last rectangle}
    
    \If{new solution $x^{new} \leftarrow \text{\FBC}(R(z^t, z^b))$ is obtained}
        \State $\mathcal{X}_E \leftarrow \mathcal{X}_E \cup \{x\}$ \Comment{Add solution to efficient set}
        \State $z^{new} \leftarrow (z_1(x^{new}), z_2(x^{new}))$ 
        
        \If{$z^t = z^T$} \Comment{Special case: processing top rectangle}
            \State Add $R(z^T, z^{new})$ and $R(z^{new}, z^b)$ to $\mathit{Rec}$
            \State \textbf{continue} \Comment{Leave and start the next while iteration}
        \EndIf
        
        \For{each $R(\bar{z}^t, \bar{z}^b) \in \mathit{Rec}$ from end to beginning} \Comment{General case: Update $Rec$ according to $z^{new}$}
            \If{$z^{new}$ dominates $\bar{z}^t$} 
                \State Remove $R(\bar{z}^t, \bar{z}^b)$ from $\mathit{Rec}$
            \ElsIf{$z^{new}$ dominates $\bar{z}^b$} 
                \State Replace $R(\bar{z}^t, \bar{z}^b)$ with $R(\bar{z}^t, z^{new})$ in $\mathit{Rec}$
                \State Add $R(z^{new}, z^b)$ to end of $\mathit{Rec}$
                \State \textbf{break} \Comment{Leave the for loop}
            \Else
                \State Add $R(\bar{z}^t, z^{new})$ and $R(z^{new}, z^b)$ to the end of $\mathit{Rec}$ 
                \State \textbf{break} \Comment{Leave the for loop}
            \EndIf
   
        \EndFor

    \EndIf
\EndWhile

\State Filter dominated solutions from $\mathcal{X}_E$ and compute the corresponding non-dominated points $\mathcal{Y}_N$
\State \Return $\mathcal{X}_E$, $\mathcal{Y}_N$
\end{algorithmic}
\end{algorithm}

The pseudo-code for the \SOS algorithm is presented in Algorithm \ref{alg:sos_algorithm}. Given an instance of the BO-EADARP problem, the algorithm outputs the set of efficient solutions and the corresponding non-dominated points, denoted as $\mathcal{X}_E$, $\mathcal{Y}_N$, respectively.  

The algorithm begins by initializing the first search region $R(z^T, z^B)$ through solving the two objectives of the BO-EADARP in lexicographical order (i,e., $lex\min_{x\in \mathcal{X}}\{z_1(x), z_2(x)\}$ and $lex\min_{x\in \mathcal{X}}\{z_2(x), z_1(x)\}$). 
The term $lex\min_{x\in \mathcal{X}}\{z_1(x), z_2(x)\}$ describes the process in which we find solutions with the smallest values for $z_2(x)$ among all feasible solutions with the smallest values for $z_1(x)$ and similar for $lex\min_{x\in \mathcal{X}}\{z_2(x), z_1(x)\}$. The obtained non-dominated points $z^T$ and $z^B$ define the first search area containing all other (yet to be identified) non-dominated points.
This initialization provides two supported efficient solutions $x^T$ and $x^B$, and their corresponding supported non-dominated points $z^T$ and $z^B$. 
Subsequently, the algorithm enters an iterative search loop (lines 5–27), which continues until the list of search regions, denoted by $Rec$, becomes empty. Specifically, in each iteration, the algorithm takes the last region from $Rec$ (line 6) and applies the \FBC algorithm to generate a new candidate solution within the selected region (lines 7–8). Based on the properties of the obtained solution, the algorithm updates the search region list $Rec$ according to the cases outlined in lines 10–25. Finally,   
upon termination of the while loop, the algorithm (line 28) filters the collected candidate solutions to identify and return the final set of efficient solutions and the corresponding non-dominated points. To clarify the search procedure, we provide an illustrative example.

\begin{exmp}
Here, we provide a simple example to illustrate the progression of Algorithm \ref{alg:sos_algorithm}.

Initially, in lines 1-4 of Algorithm \ref{alg:sos_algorithm}, we obtain two lex-min solutions $x^T$ and $x^B$  as illustrated in Figure \ref{SOS1}, and define the initial region list $ Rec = [R(z^T, z^B)]$.
Subsequently, we enter the while loop spanning lines 5-27:
\begin{enumerate}
    \item In the first iteration, the region $R(z^t, z^b) = R(z^T, z^B)$ is dequeued. 
    
    We apply the \FBC algorithm to $R(z^t, z^b)$ in line 5.  Assume the generated efficient solution candidate is $\overline{x}^3$ with the corresponding objective space point $\bar{z}^3$ (marked by the red point in Figure \ref{SOS1}). Since the extracted region is the top region of the form $R(z^T, *)$, the algorithm falls into the case defined by lines 10-13. Therefore, we have new $Rec$ as illustrated in Figure \ref{SOS2}:
    \[
    Rec = [R(z^T, \bar{z}^3),\quad R(\bar{z}^3, z^B)].
    \]

    \item In the second iteration, the region $R(z^3, \bar{z}^B)$ is dequeued, and we have $Rec=[R(z^T, \bar{z}^3)]$.

    Assuming the \FBC algorithm generates a new solution $\overline{x}^4$ with the corresponding objective space point $\overline{z}_4$ (see Figure \ref{SOS3}). In this case, we have $\bar{z}^4$ dominates  $\bar{z}^3$ but not $z^T$. As a result, the algorithm  falls into the case defined by lines 17-20. Hence, we replace $R(z^T, \bar{z}^3)$ by $R(z^T, \bar{z}^4)$ and obtain the new $Rec$ (see Figure \ref{SOS4}) as follows:
    \[
    Rec = [R(z^T, \bar{z}^4),\quad R(\bar{z}^4, z^B)].
    \]

    \item Assuming there are no other feasible solutions in $R(z^T, \bar{z}^4), R(\bar{z}^4, z^B)$, then the \SOS algorithm sequentially dequeues these three regions without finding new solutions, leading to termination. The final efficient solution set is $\{x^T, \overline{x}^4, x^B\}$ with the corresponding non-dominated points $\{z^T, \overline{z}^4, z^B\}$.
\end{enumerate}

    \begin{figure}[t]
    \centering
    \subfigure[First candidate solution.]{
    \includegraphics[width=8cm]{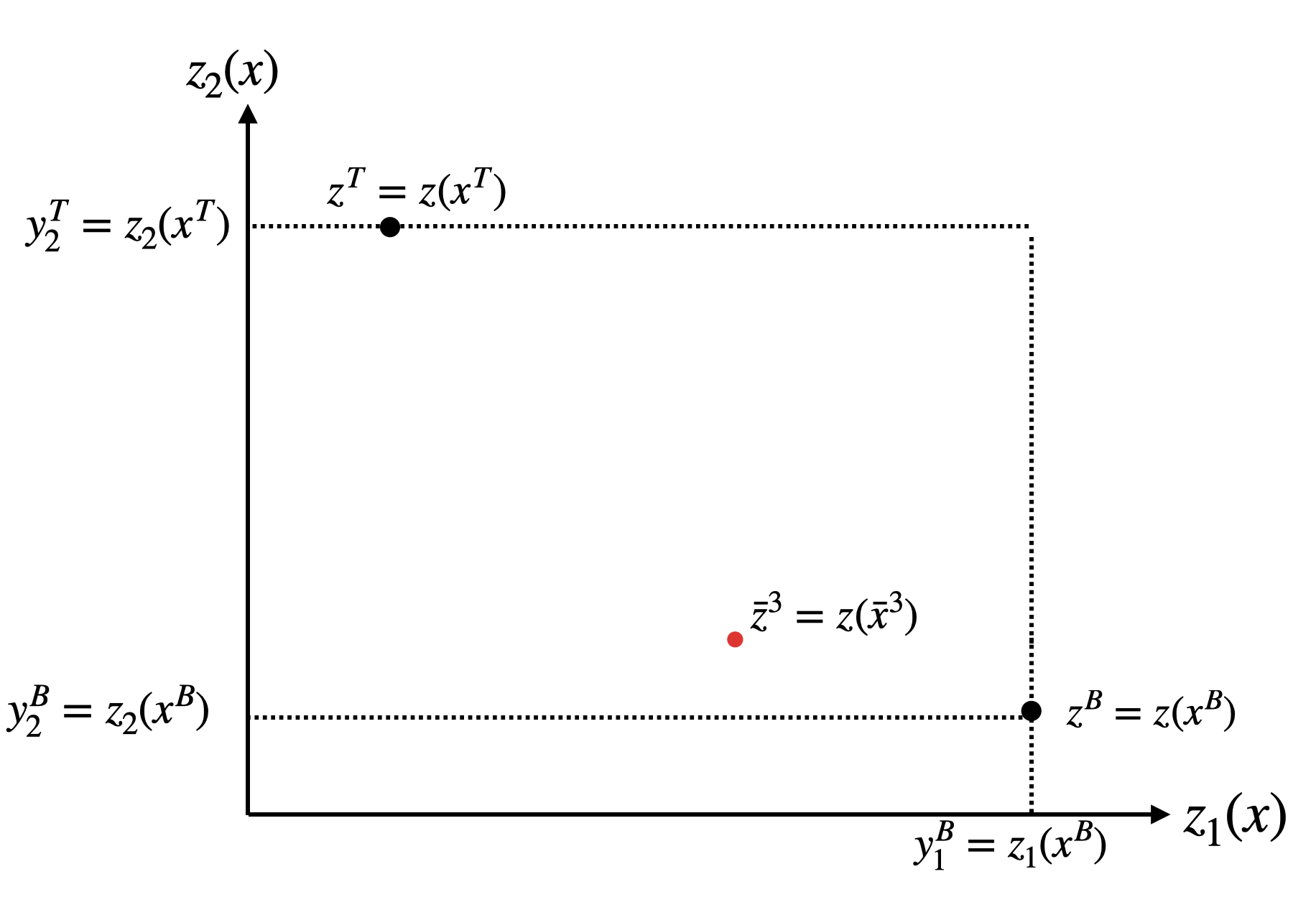}
    \label{SOS1}
    }
    \hfill
    \subfigure[$Rec$ after first loop.]{
    \includegraphics[width=8cm]{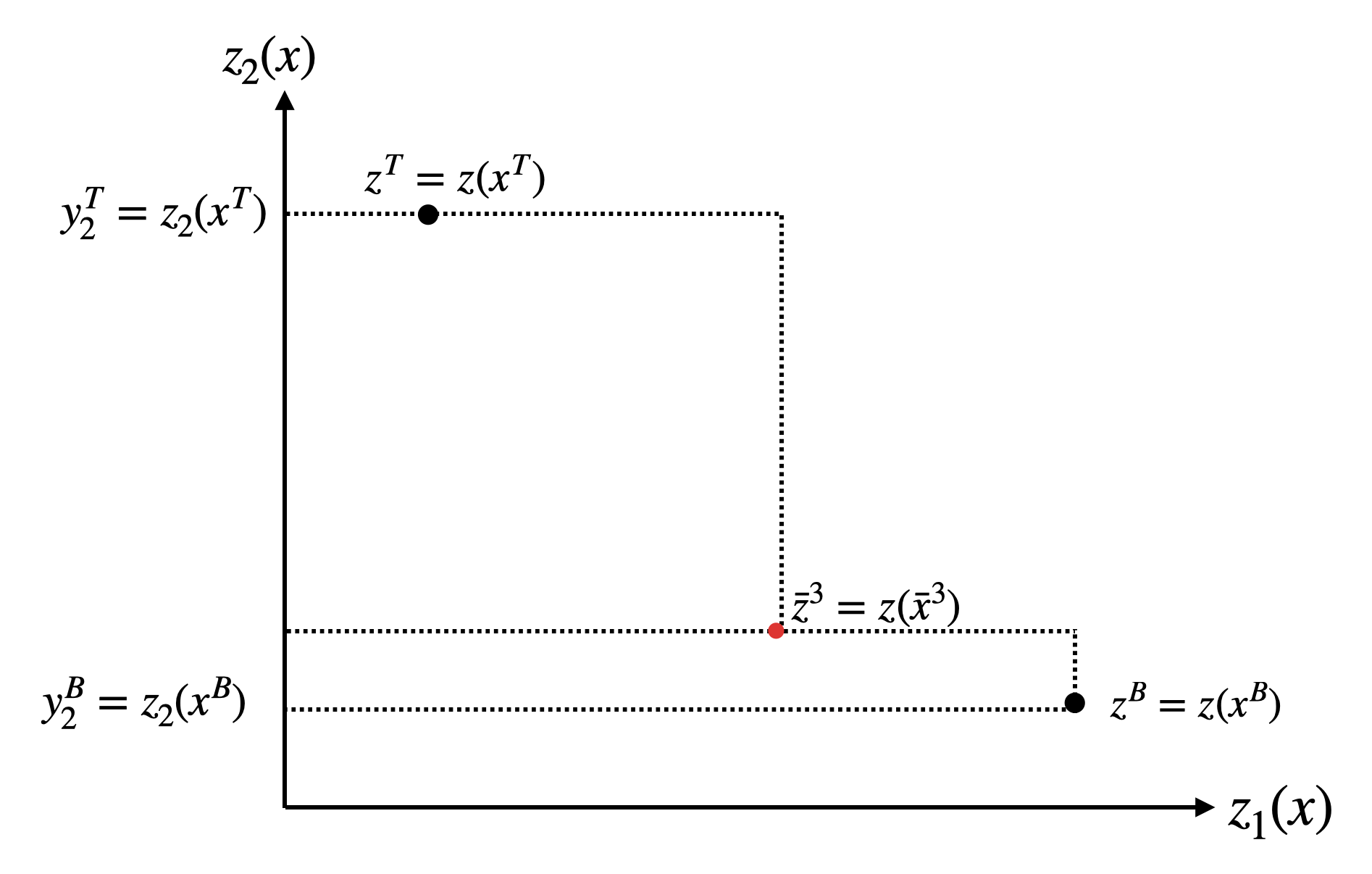}
    \label{SOS2}
    }
    \hfill
    \subfigure[Second candidate solution.]{
    \includegraphics[width=8cm]{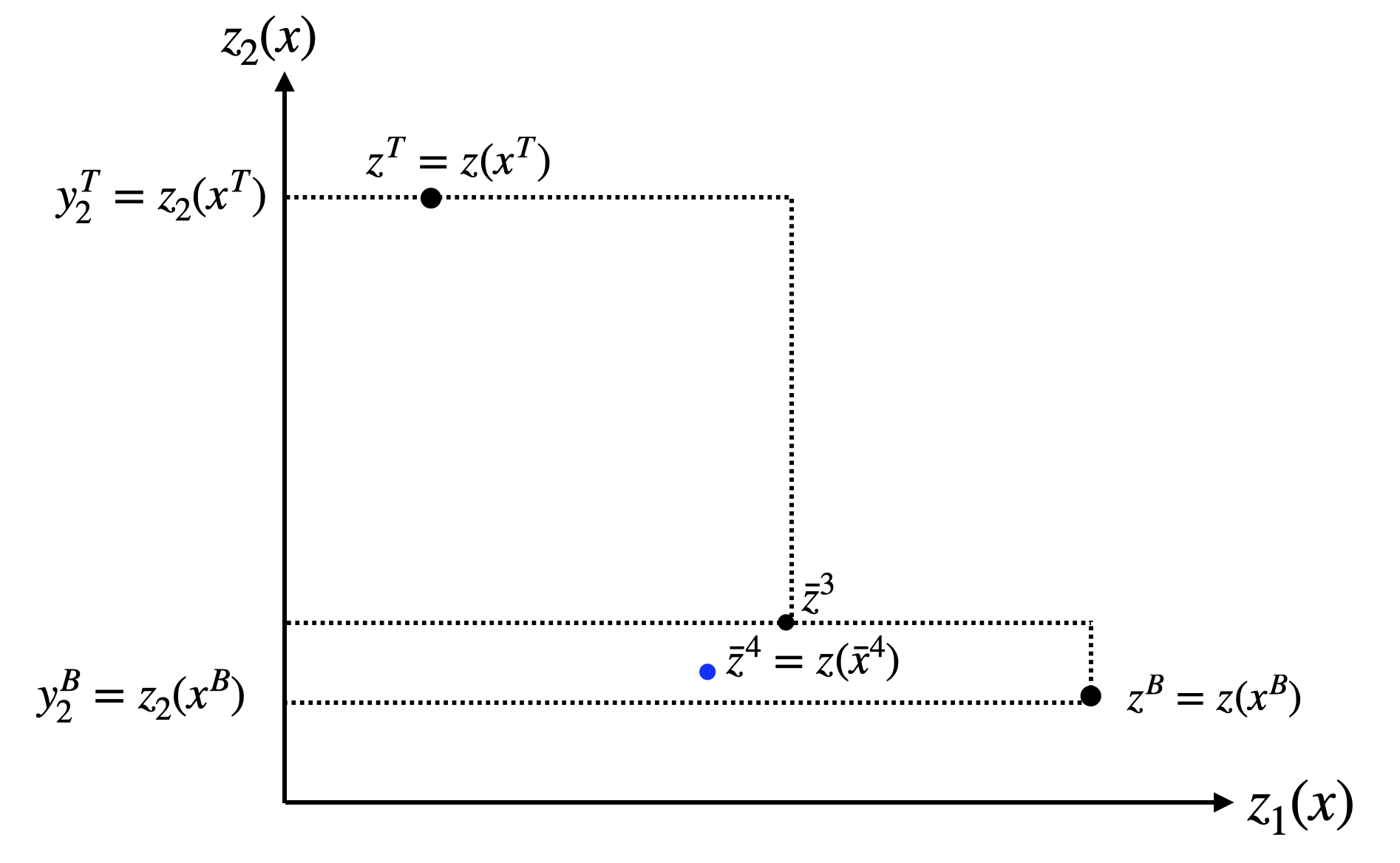}
    \label{SOS3}
    }
    \hfill
    \subfigure[$Rec$ after second loop.]{
    \includegraphics[width=8cm]{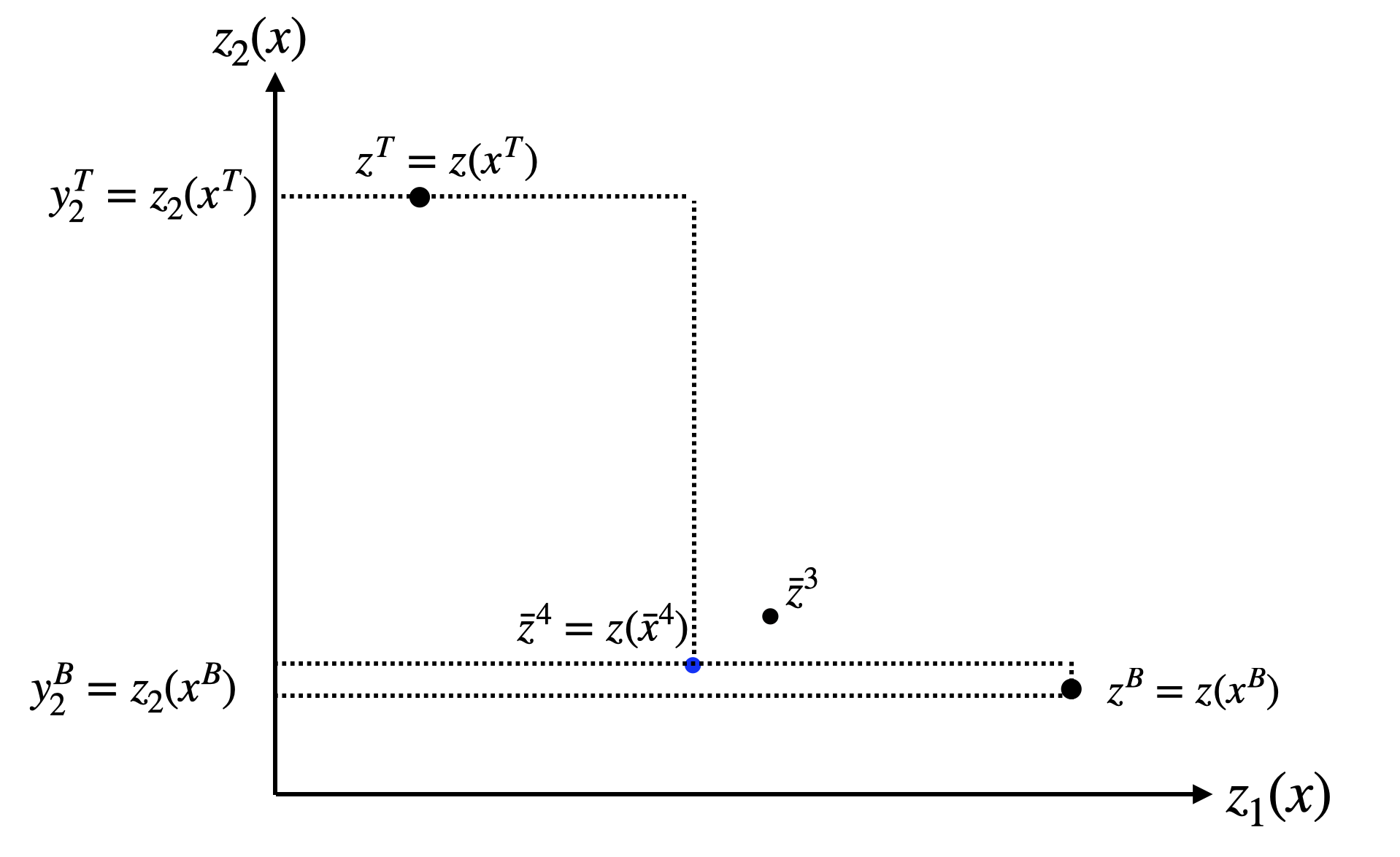}
    \label{SOS4}
    }
\caption{\centering Illustration for \SOS algorithm}
\end{figure}
\end{exmp}



\begin{figure}[htp]
\centering
\includegraphics[width=14cm]{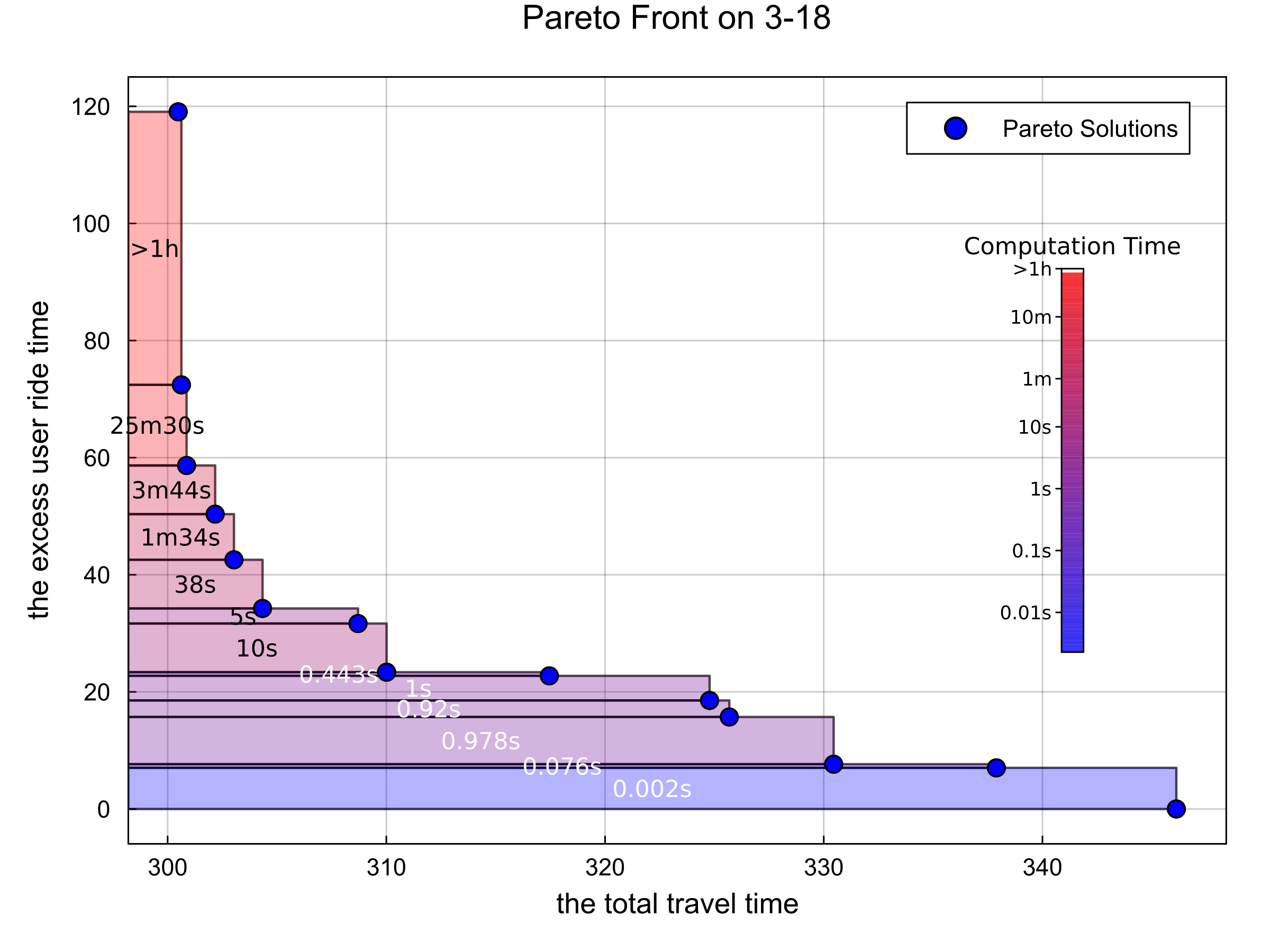}
\caption{\centering Computational time on different rectangles for instance a3-18}
\label{CPU on rectangles}
\end{figure}

It should be noted that the proposed algorithm can also start with $R(z^t,z^b)$ where $z^t$ and $z^b$ are not necessarily lex-min solutions but the defined rectangle area should be sufficiently large to include all non-dominated points. In this case, $\mathcal{X}_E$ is initialized with an empty set.

\paragraph{Analysis}

The \SOS algorithm is an exact method (see Theorem \ref{exactness of algorithm} at the end of this section) that can be interpreted as a criterion-space search strategy. The algorithm maintains a list of search regions, denoted by \( Rec \), each represented as a rectangle \( R(z^t, z^b) \) in the objective space. These rectangles are ordered in decreasing fashion along the \( z_2 \)-axis, meaning the algorithm explores the search space from bottom to top. This search direction offers several key advantages:

\begin{itemize}
    \item \textit{Higher efficiency in low \( z_2 \) regions:} In the lower regions of the objective space which correspond to smaller \( z_2 \) values, there are typically fewer feasible fragment combinations. As a result, the \SOS algorithm tends to be more efficient when applied to these regions.
    
    \item \textit{Early restriction of the criterion space:} Solutions found in lower rectangles can be used to prune infeasible or dominated portions of the search space in higher rectangles. For example, as shown in Figure \ref{SOS3}, solution \( \overline{z}_4 \) allows us to eliminate part of the region \( R(z^T, \overline{z}^3) \), thus reducing redundant computations.
\end{itemize}

Despite these strengths, the algorithm's performance tends to degrade in upper rectangles with higher \( z_2 \) values. These regions are associated with higher total excess user ride time and often involve a significantly larger number of feasible fragment combinations, involving more iterations of \FBC. For example, Figure \ref{CPU on rectangles} illustrates the computational time of the \SOS algorithm at each rectangle for an instance with 3 vehicles and 18 requests. It is clear that the top-left rectangle requires the longest computation time, as it contains the highest number of feasible fragment combinations, while the bottom-right rectangle only consumes 0.002 seconds to enumerate all feasible fragment combinations. This suggests that the proposed method is particularly well-suited for scenarios in which service quality (i.e., lower excess ride time) is a primary concern.

\begin{theorem}[Exactness] \label{exactness of algorithm}
The \SOS algorithm presented in Algorithm \ref{alg:sos_algorithm} is guaranteed to find all non-dominated points.
\end{theorem}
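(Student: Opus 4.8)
The plan is to establish exactness by a two-part argument: (i) every point the algorithm reports is genuinely non-dominated, and (ii) no non-dominated point is missed. Part (i) follows almost immediately from the final filtering step (line 28): all collected candidates are filtered so that only the mutually non-dominated ones survive, and by construction each candidate returned by \FBC is a feasible solution. So the substance of the proof is part (ii), a coverage argument, and the natural vehicle is a loop invariant on the region list $Rec$ together with Theorem~\ref{theo::FBC}.

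First I would state the loop invariant: at the start of every iteration of the while loop (lines~5--27), every non-dominated point that is not the image of a solution already recorded in $\mathcal{X}_E$ lies inside exactly one region $R(z^t,z^b)$ currently in $Rec$. The base case uses line~1: the lexicographic optima $x^T$ and $x^B$ give the extreme non-dominated points $z^T$ and $z^B$, and since $z_1$ is minimized over all feasible solutions at $x^T$ and $z_2$ at $x^B$ (with the secondary objective breaking ties), every remaining non-dominated point $z$ satisfies $z_1^T \le z_1 < z_1^B$ and $z_2^B \le z_2 < z_2^T$ — i.e. it lies in $R(z^T,z^B)$, the sole initial region. For the inductive step I would take the region $R(z^t,z^b)$ popped in line~6 and split on whether \FBC returns a solution. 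If it does not, then by the contrapositive of Theorem~\ref{theo::FBC} there is no feasible solution in $R(z^t,z^b)$ at all, so in particular no non-dominated point there, and dropping the region preserves the invariant. If \FBC returns $x^{new}$ with image $z^{new}$, then $z^{new} \in R(z^t,z^b)$, i.e. $z_1^{new} < y_1^b$ and $y_2^b \le z_2^{new} < y_2^t$; $z^{new}$ is added to $\mathcal{X}_E$, and the point $z^{new}$ itself need no longer be covered. The remaining non-dominated points in $R(z^t,z^b)$ that are not dominated by $z^{new}$ must lie either strictly below-left of $z^{new}$ in the $z_2<z_2^{new}$ strip, hence in $R(z^{new},z^b)$, or strictly above $z^{new}$ with smaller $z_1$, hence in $R(z^t,z^{new})$ (and symmetrically the two special cases of lines~10--13 for the top region); points weakly dominated by $z^{new}$ are either equal to it or not non-dominated. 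I would then check each branch of lines~10--25 in turn, verifying that the regions added (and those removed or shrunk because $z^{new}$ now dominates their corners $\bar z^t$ or $\bar z^b$) still cover every yet-uncovered non-dominated point; removal of $R(\bar z^t,\bar z^b)$ is justified precisely when $z^{new}$ dominates $\bar z^t$, which forces $z^{new}$ to dominate the entire region, so nothing non-dominated is lost.

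Termination must also be argued to conclude, since an invariant alone does not force $Rec$ to empty. Here I would note that the set of feasible objective vectors is finite (finitely many fragments, hence finitely many fragment-based routes, hence finitely many feasible solutions), so $\mathcal{Y}_N$ is finite; each successful \FBC call strictly enlarges the set of non-dominated images represented in $\mathcal{X}_E$ (the new $z^{new}$ sits in a region none of whose interior points were yet recorded), so there can be only finitely many successful calls, and each unsuccessful call strictly shrinks $Rec$ without re-adding anything. Combining: the loop halts with $Rec=\emptyset$, at which moment the invariant says every non-dominated point is the image of some solution in $\mathcal{X}_E$; the final filter in line~28 then returns exactly $\mathcal{Y}_N$ together with one pre-image per point.

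The main obstacle I anticipate is the case analysis of lines~14--25, specifically making airtight the claim that shrinking $R(\bar z^t,\bar z^b)$ to $R(\bar z^t,z^{new})$ and appending $R(z^{new},z^b)$ (rather than $R(z^{new},\bar z^b)$) loses no non-dominated point — one must use that $z^{new}$ was found in the popped region together with the ordering of $Rec$ along the $z_2$-axis to argue the strip between $\bar z^b$ and $z^{new}$ contains nothing new, and the \textbf{break} statements mean only the first affected region is updated, so I would need to show the regions below it in $Rec$ were already disjoint from the part of the criterion space $z^{new}$ can reach. A careful statement of the geometric relationship between $z^{new}$, the popped region, and the $z_2$-sorted list is the crux; the rest is bookkeeping.
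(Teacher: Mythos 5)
Your overall architecture (loop invariant on $Rec$ plus a termination argument, then the final filter) differs from the paper's proof, which argues by contradiction: it takes a missed non-dominated point $z^*$, looks at the \emph{last} rectangle containing $z^*$ that is popped at line~6, invokes Theorem~\ref{theo::FBC} to get a feasible $z'\neq z^*$ there, and shows by the two-case comparison of $z_2(z')$ with $z_2(z^*)$ that one of the rectangles the algorithm then adds ($R(z',z^b)$, and either $R(z^t,z')$ or a superset $R(\bar z^t,z')$) still contains $z^*$, contradicting ``last''. That route only has to certify that a rectangle containing $z^*$ is always re-added, whereas your route must certify \emph{every} branch of lines~10--25 against \emph{every} yet-unfound non-dominated point — and that is exactly the part you defer as ``the main obstacle''. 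So the crux is missing, and the one concrete justification you do offer for it is wrong: you claim that $z^{new}$ dominating $\bar z^t$ ``forces $z^{new}$ to dominate the entire region'' $R(\bar z^t,\bar z^b)$. With the paper's region definition $R(z^t,z^b)=\{0\le z_1<y_1^b,\ y_2^b\le z_2<y_2^t\}$, this is false: the region is not bounded on the left by $y_1^t$, so it contains points with $z_1<z_1^{new}$ (e.g.\ near $(0,\bar y_2^b)$), and moreover $\bar z^t$ is the corner with the \emph{largest} $z_2$ of the region, so dominating it says nothing about points with smaller $z_2$. The removal at line~16 is in fact harmless for a different reason (under the lex-min initialization the top corners examined in the loop are $z^T$, which no feasible point can dominate, so that branch never bites); as written, your inductive step would fail at this case.

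Two further steps need repair. First, your invariant says each uncovered non-dominated point lies in ``exactly one'' region; because the Else branch (lines~21--23) keeps $R(\bar z^t,\bar z^b)$ in $Rec$ while appending the larger $R(\bar z^t,z^{new})\supseteq R(\bar z^t,\bar z^b)$, regions in $Rec$ can overlap, so the invariant must be ``at least one''. Second, and for the same reason, your termination argument is unsupported: since later-popped regions can contain the images of already-recorded solutions, nothing in \FBC prevents it from returning a solution whose objective vector was already recorded, so a successful call need not ``strictly enlarge the set of non-dominated images in $\mathcal{X}_E$''. Termination has to be argued differently (e.g.\ via the finiteness of fragment combinations and the incompatible-fragment cuts accumulated inside \FBC, or by bounding the possible corner pairs); note the paper's own proof silently assumes that every added rectangle is eventually popped, so if you want a self-contained invariant-style proof you must close this hole rather than cite the paper's argument for it.
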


\begin{proof}
We prove the theorem by contradiction. Suppose, for the sake of contradiction, that there exists a non-dominated point \( z^* \) that is not identified by the algorithm. Let \( R(z^t, z^b) \) be the last rectangle containing \( z^* \) that is extracted at line 6 of Algorithm \ref{alg:sos_algorithm}. 

According to Theorem \ref{theo::FBC}, when the \FBC algorithm is applied to the region \( R(z^t, z^b) \), it must generate a feasible solution $x'$ with the corresponding objective space point \( z' \neq z^* \). We consider the following two cases:

\begin{enumerate}
    \item If \( z_2(z') > z_2(z^*) \), then the new rectangle \( R(z', z^b) \) contains \( z^* \).
    
    \item If \( z_2(z') \leq z_2(z^*) \), then, since \( z^* \) is a non-dominated point, it must hold that \( z_1(z') > z_1(z^*) \). In this case, the rectangle \( R(z^t, z') \) contains \( z^* \).
\end{enumerate}

By construction in Algorithm~\ref{alg:sos_algorithm}, the rectangle \( R(z', z^b) \) is always added to the list \( Rec\) in lines 10--25. In addition, a second rectangle is also added: either \( R(\bar{z}^t, z') \), which contains \( R(z^t, z') \), if line 18 or 22 is executed; or \( R(z^t, z') \) otherwise. 
 
 Consequently, one of these new rectangles, which still contains \( z^* \), will eventually be extracted again in a subsequent iteration (line 6). 
This contradicts our assumption that \( R(z^t, z^b) \) was the last rectangle containing \( z^* \) to be extracted. Therefore, the initial assumption must be false, and the algorithm must find all non-dominated points.
\end{proof}

\subsection{Enhancements} \label{sec::enhancement on algorithm}
In this section, we introduce several enhancements that aim at improving the efficiency of our \SOS algorithm. These enhancements focus on the following aspects:

\begin{itemize}
    \item \textit{Enhancement of the \FBC algorithm:} Refinement of first-step formulation to achieve a more accurate approximation of total travel time given a set of fragments (Section \ref{sec: enhancement 1}).
    \item \textit{Enhancement of the search process:} Integration of the dichotomic method to partition the objective space, followed by application of a checker to evaluate each partition (Section \ref{sec: enhancement 3}).
\end{itemize}


\subsubsection{Enhance the first-step formulation} \label{sec: enhancement 1}
In this section, we propose an enhanced version for the first step formulation (as presented by equations \eqref{1.1} to \eqref{1.6}, abbreviated as the original first-step model hereafter). Our objective is to significantly improve the quality of feasible solutions generated by the \FBC\ algorithm, and thus in turn greatly improve the computational efficiency of our \SOS\ algorithm.

The main limitation of the original first-step model is that it only accounts for the travel time over fragments. This may result in a large discrepancy between the estimated and actual total travel time of a feasible solution (if one exists) composed of the selected fragments, potentially yielding a low-quality solution with an excessive total travel time. 

To address this issue, we enhance the original first-step model by providing a closer approximation to the true total travel time implied by the selected fragments. This is achieved by:

\begin{enumerate}
    \item \textit{Adding lower bounds on travel times}: we incorporate lower bounds for the travel times required to connect origin depots to fragments, between fragments, and from fragments to destination depots.
    
    \item \textit{Refining lower bounds via variable fixing}: we further strengthen the lower bounds by eliminating infeasible paths in advance. That is, we fix some variables $y_{i,j}$ to zero when the arc from $i$ to $j$ violates time window constraints. Specifically:
    \[
    \textit{(Variable fixing)}:\quad \text{For } i \in D \text{ and } j \in P, \text{ if } e_i + t_{i,j} + s_i \geq l_j, \text{ then } y_{i,j} \text{ is fixed to zero.}
    \]
\end{enumerate}

These enhancements help reduce the gap between the estimated and actual travel times, leading to higher-quality feasible solutions and improved performance of the \SOS\ Algorithm.

\noindent \textbf{Enhanced first-step model to calculate enhanced lower bound of total travel time:}

\begin{align}
\label{enhanced obj}
     \min \sum\limits_{f \in \mathcal{F}} T_f x_f + \sum\limits_{i \in D \cup O}\sum\limits_{j \in P \cup F}t_{i,j}y_{i,j}
\end{align}
s.t.
\begin{align}
\label{new cons1}
    \sum \limits_{j \in P\cup F}y_{i,j} & = \sum \limits_{f \in \mathcal{F}_i}x_{f} 
    && \forall i \in D 
    \\
%
\label{new cons2}
    \sum \limits_{i \in D \cup O}y_{i,j} & = \sum \limits_{f \in \mathcal{F}_j}x_{f} 
    && \forall j \in P 
    \\
%
\label{from origin}
    \sum \limits_{j \in  P\cup F} y_{i,j} & = 1
    &&\forall i \in O 
    \\
%
\label{to destination}
    \sum \limits_{j \in  D \cup O} y_{i,j} & = 1
    && \forall i \in F 
    \\
%
    \sum \limits_{f \in \mathcal{F}} a_{i,f}x_f & = 1 
    && \forall i \in P
    \\
%
    \sum \limits_{f \in \mathcal{F}} R_f x_f & < y_2^T
    \\
%
    \sum \limits_{f \in \mathcal{F}} R_f x_f & \geq y_2^B
    \\
%
    \sum \limits_{f \in \xi} x_f & \leq |\xi| - 1 
    && \forall \xi \in \Xi
    \\
%
    y_{i,j} & \in \{0,1\}
    && \forall i \in D \cup O, j \in P \cup F
    \\
%
    x_f & \in \{0,1\}
    && \forall f \in \mathcal{F}
\end{align}
Here, $y_{i,j}$ is a binary variable that equals one if an arc is from $i$ to $j$. Distinguishing from the complete set of feasible fragments $\mathcal{F}$, we denote $\mathcal{F}_i$ the set of fragments that end at node $i$ and $\mathcal{F}_j$ the set of fragments that start at node $j$. $t_{i,j}$ is the travel time from node $i$ to $j$. Constraints (\ref{new cons1}) indicate that  from each ending node $i$ of selected fragments, there must be an arc connect it to a pickup node/ destination depot. Similarly, constraints (\ref{new cons2}) indicate that there must be an arc from a drop-off node/ origin depot to each starting node $j$ of selected fragments.

\begin{exmp}
We take the example with the top-left rectangle shown in Figure \ref{CPU on rectangles} to illustrate the computational efficiency of the enhanced formulation and the original one on the average objective values. The results are summarized in Table \ref{enhance checker results 2}. The first column reports the average objective values of enhanced/original first-step formulation (denoted as $\overline{Obj}$) while the second column reports the real solution costs (denoted as $\overline{Real.sol}$) corresponding to the feasible solutions constructed in step 2. The third column computes the gap between $\overline{Obj}$ and $\overline{Real.sol}$, and the fourth column records the number of solutions searched during the time limit (the time limit is set to 30 seconds). Surprisingly, the enhanced \SOS algorithm finishes checking this rectangle in 15.4 seconds. The average gap between the solution costs obtained from the first stage and its real solution cost is only 3.7\%. Also, by eliminating subtours, the number of feasible combinations of fragments is significantly reduced, which is another reason for this huge speed-up.   
\end{exmp}

\begin{table}[!ht]
\centering
\begin{threeparttable}
\caption{Results with/without enhancements on first-step model \\}
\label{enhance checker results 2}
\begin{tabular}{c c c c c c}
\toprule
& $\overline{Obj}$ & $\overline{Real.sol}$ & $\overline{Gap}$ & Numb sols & CPU(s)\\
\midrule
enhanced version &298.35  &310.09  & 3.7\% &170  & 15.4\\
original version& 186.096 & 310.462 & 40\% & 262 &30\\
\bottomrule
\end{tabular}
\end{threeparttable}
\end{table}

\subsubsection{Enhance \SOS algorithm with dichotomic method} \label{sec: enhancement 3}

We also consider enhancing the \SOS algorithm by applying the dichotomic algorithm \citep{aneja1979bicriteria} at the beginning to generate non-dominated areas. The benefit of such enhancement is that we can leverage the strengths of both methods and avoid their weaknesses:
\begin{enumerate}
    \item The dichotomic algorithm is efficient in finding supported non-dominated points. 
    \item The \SOS algorithm is efficient at evaluating rectangles with constrained values for the second objective, particularly when the second objective has small values or a narrow range;
\end{enumerate}

To integrate the dichotomic method into the \SOS\ framework, it suffices to make the following modification: Once the efficient solutions $z^T = z^1, z^2, \ldots, z^n = z^B$ are obtained using the dichotomic method, the initialization step (lines 1--3) of Algorithm~\ref{alg:sos_algorithm} should be updated as follows:
\begin{itemize}
    \item Set $Rec \leftarrow [R(z^1, z^2), R(z^2, z^3), \ldots, R(z^{n-1}, z^n)]$;
    \item Set $\mathcal{X}_E \leftarrow \{z^1, z^2, \ldots, z^n\}$.
\end{itemize}
This modification enables the \SOS\ algorithm to utilize the output of the dichotomic method as its initial set of solutions and regions for further exploration.

\section{Computational results} \label{sec::computational results}
We investigate the efficiency of the proposed algorithm and enhancements through experimentation. All algorithms are coded in Julia 1.11.2 and are run on a standard PC with an Intel(R) Core(TM) i7-8700 CPU at 3.20 GHz and with 32 Gb of RAM using a single thread only. The remainder of this section is organized as follows. Section \ref{benchmark algorithms} presents the benchmark algorithms considered in the computational experiments, which include two criterion space search algorithms and one decision space search algorithm. Section \ref{experimental settings} describes experimental settings, considered instances, and the quality indicators for evaluating the quality of approximated Pareto frontiers obtained from the different solution methods, whenever a time limit is reached. Section \ref{sec:impact of enhancement} evaluates the enhancement methods. Section \ref{sec:overall results on DARP} examines the performance of the proposed method on classical DARP instances where benchmark results exist in the literature. After validating the performance of our proposed \SOS algorithm on classical DARP instances, we solve the BO-EADARP with the proposed method in Section \ref{sec:overall results on E-ADARP}. As there are no existing benchmark results for the BO-EADARP, we apply the $\epsilon$-constraint method and build the reference set with all the considered algorithms. By analyzing the obtained Pareto-optimal solutions, we yield managerial insights that could be helpful in the decision-making process. 

\subsection{Benchmark algorithms} \label{benchmark algorithms}
We benchmark our \SOS algorithm with methods from the literature, i.e., the $\epsilon$-constraint method \citep{laumanns2006efficient}, the balanced box method (\cite{boland2015criterion}), and an adapted bi-objective branch-and-price (BOBP) algorithm (\cite{parragh2019branch}). For the sake of organization, we briefly present their main ideas as follows and move all details to \ref{appendix: implementation for benchmark}. At the end of this part, we compare our algorithm with the benchmark algorithms from a methodological perspective.

\subsubsection{Epsilon-constraint method} \label{epsilon-constraint method}
The mechanism of the $\epsilon$-constraint method is illustrated in Figure \ref{epsilon figure}, where the solid black points are obtained non-dominated points, the solid red point is the next non-dominated point, and hollow black points are non-dominated points that are not yet obtained. 
The $\epsilon$-constraint method starts in the rectangle area which contains all non-dominated points (e.g., $R(z^T, z^B)$ in Figure \ref{epsilon figure}). It always optimizes one objective (e.g., $z_1(x)$) while the other is bounded by an $\epsilon$ value  (i.e., $z_2(x) < \epsilon$). In each iteration, the $\epsilon$ value is updated with the $z_2(x')$, where $x'$ is the newly-found non-dominated solution. 
In numerical experiments, we test two versions of the $\epsilon$-constraint method: one uses a commercial solver (e.g., Gurobi), and the other applies an adapted B\&P algorithm from \cite{su2024branch} to solve each single-objective problem. The implementation of the second version can be found in \ref{appendix: implementation for epsilon with BP}.
\begin{figure}[H]
\centering
\includegraphics[width=5cm]{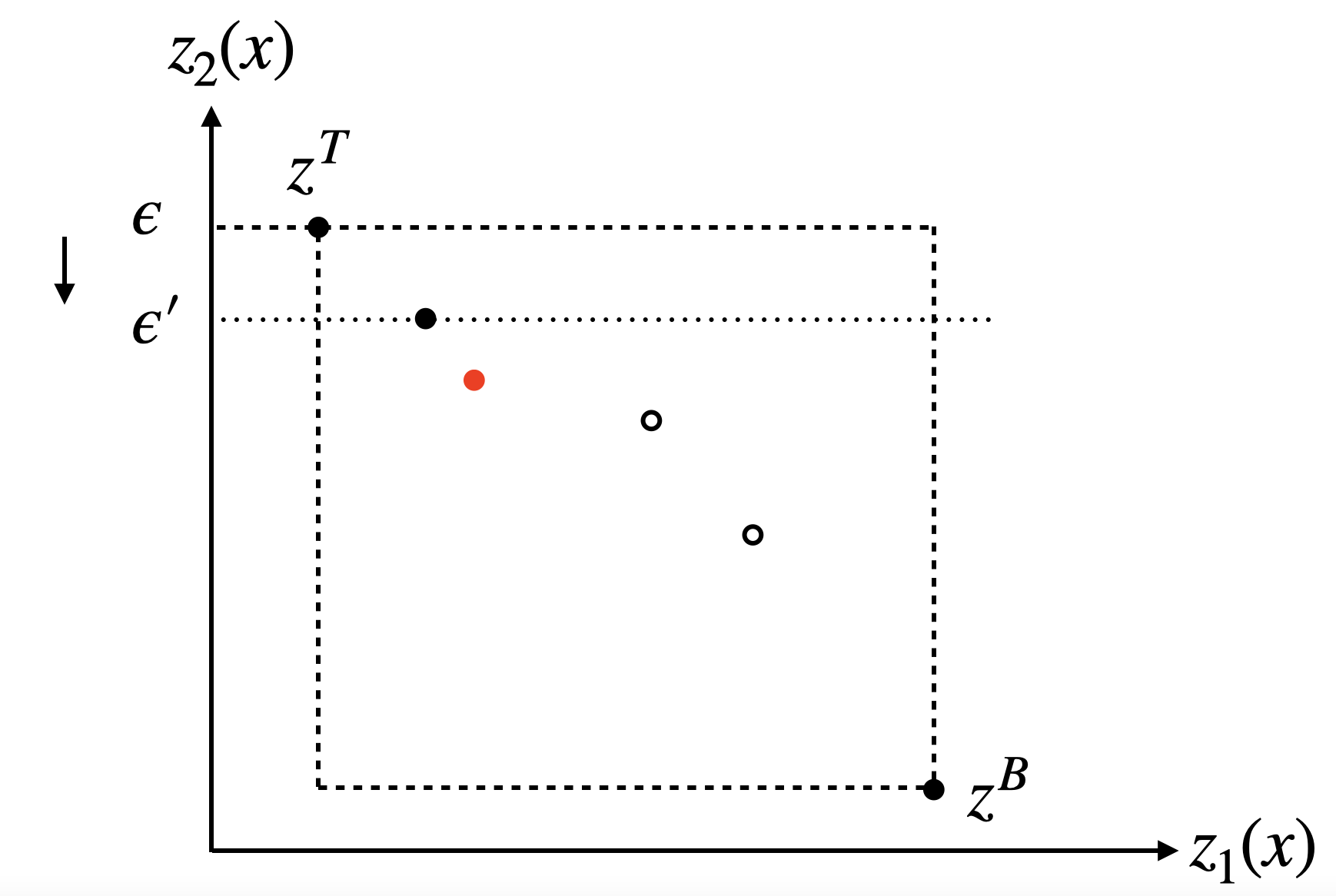}
\caption{\centering Applying the $\epsilon$-constraint method to find non-dominated points}
\label{epsilon figure}
\end{figure}

\subsubsection{Balanced Box Method} \label{sec::balanced box method}
The balanced box method starts by solving two objectives in lexicographical order (i,e., $lex\min_{x\in \mathcal{X}}\{z_1(x), z_2(x)\}$ and $lex\min_{x\in \mathcal{X}}\{z_2(x), z_1(x)\}$) to define an initial rectangle. The initial rectangle is then split horizontally to create two smaller rectangles. In each smaller rectangle, a specified lexicographical minimization problem is solved to obtain the next non-dominated point, which is then used to further partition the objective space. By iteratively splitting and searching in each well-partitioned rectangle area, all the non-dominated points can be obtained.
Similarly, we test two different versions of the balanced-box method: one calls a commercial solver (Gurobi) to solve the lexicographical optimization problem, and the other leverages the B\&P algorithm of \cite{su2024branch}. For further explanation and implementation details, we refer to \ref{appendix: implementation for balanced-box}.

\subsubsection{The BOBP algorithm}
We combine the B\&P algorithm developed in \cite{su2024branch} and the bi-objective branch-and-bound algorithm developed in \cite{parragh2019branch} to handle the BO-EADARP. Most ingredients of the proposed algorithm are taken from \cite{parragh2019branch}, while we apply the CG algorithm of \cite{su2024branch} to compute the lower bound set at each node of the B\&P tree. All technical details can be found in \ref{implementation of the BOBP}.

\subsection{Organization of experiments and settings} \label{experimental settings}
Our experiments are organized as follows. In Section \ref{sec:impact of enhancement}, we analyze the impact of each enhancement method on the overall performance. The abbreviations of different versions of the \SOS algorithm are summarized as following:
\begin{enumerate}
    \item The original \SOS algorithm without any enhancement method is abbreviated as $ori$
    \item The \SOS algorithm with enhanced model is abbreviated as $c_1$
    \item The \SOS algorithm enhanced by the dichotomic method is abbreviated as $dic$
    \item The \SOS algorithm with the above two enhancements is abbreviated as $hb$
\end{enumerate}

In Section \ref{sec:overall results on DARP} and \ref{sec:overall results on E-ADARP}, we conduct experiments with the best-performing \SOS versions and we keep their abbreviations (i.e., version $c_1$ and $hb$).
Eight different methods, including the benchmark methods presented in Section \ref{benchmark algorithms}, are considered in our experiments. To do a relatively fair comparison, these methods are classified into two groups according to their single-objective solver engines:
\begin{enumerate}
    \item  Algorithms that use a general-purpose MIP solver as search engine. They are:
    \begin{enumerate}
        \item The \SOS algorithm relying on a MIP solver to solve the second stage model (i.e., version $c_1$)
        \item The $\epsilon$-constraint method using a MIP solver to solve each single-objective problem (abbreviated as $\epsilon_1$)
        \item The balanced-box method using a MIP solver (abbreviated as $b_1$)
    \end{enumerate}
    \item  Algorithms that use a problem-tailored B\&P/CG algorithm \citep{su2024branch} as search engine. They are:
    \begin{enumerate}
        \item The \SOS algorithm with B\&P to solve the second stage model (abbreviated as $c_2$)
        \item The $\epsilon$-constraint method using B\&P to solve each single-objective problem (abbreviated as $\epsilon_2$)
        \item The balanced-box method using B\&P (abbreviated as $b_2$)
        \item The BOBP method using the dichotomic method in combination with the CG algorithm of \cite{su2024branch} at each B\&P node to obtain the lower bound set (abbreviated as $bp$)
        \item The \SOS algorithm hybridized with the dichotomic method (i.e., version $hb$)
    \end{enumerate}
\end{enumerate}

These algorithms are compared on the type-a instances of \cite{cordeau2006branch} under a bi-objective context. For these instances, we use the results reported in \cite{parragh2009heuristic} as reference results (abbreviated as $r$) to evaluate the performance of our proposed methods. The time limit per instance for each considered algorithm is set to 5 hours. In the result tables, we use the following abbreviations:
\begin{enumerate}
    \item CPU: the CPU time in seconds for solving the corresponding problem with the considered algorithm.
    \item $\mathcal{X}_E^{a}$: the set of efficient solutions obtained by the considered algorithm $a \in \{ori, dic, c_1, c_2, \epsilon_1, \epsilon_2, bp, hb, b_1, b_2, r \}$
    \item $|\mathcal{X}_E^{a}|$: the number of efficient solutions for an instance with an algorithm $a \in \{ori,dic,c_1, c_2, \epsilon_1, \epsilon_2, bp, hb, b_1, b_2, r \}$.
\end{enumerate}
If an instance is solved to optimality within the time limit, then we indicate it by an asterisk in column ``$|\mathcal{X}_E^{a}|$". For some instances, none of the considered algorithms can give the complete Pareto frontier. In other words, all the algorithms give an approximation of the Pareto frontier on these instances. To compare the performance of the algorithms in this case, we report the values of two quality indicators proposed in the literature \citep{knowles2006tutorial}:\
\begin{enumerate}
    \item The hypervolume indicator $I_H$ measures the hypervolume of the objective space that is weakly dominated by an approximation set. Higher $I_H$ values are preferable.
    \item The multiplicative $\epsilon-$indicator $I_{\epsilon}$ is defined as the minimum factor $\epsilon$ such that if every point in reference set $R$ was multiplied by $\epsilon$, the resulting approximation set would be weakly dominated by approximation set $A$. For the DARP instances, we take the results of \cite{parragh2009heuristic} as $R$. For the E-ADARP instances, we take the complete Pareto front obtained from a considered algorithm as $R$.
    Lower $I_{\epsilon}$ is preferable.
    \begin{align}
        I_{\epsilon}(A,R) = \inf \{\epsilon \in \mathbb{R}: \forall z^2 \in R ~ \exists z^1 \in A \text{ so that } z^1 \leq_{\epsilon} z^2\}
    \end{align}
\end{enumerate}

To have a clear comparison, we use normalized objective values to compute $I_H$ and $I_{\epsilon}$, we consider the upper bounds on both objectives as the reference point (denoted as $(C,L)$): 
\begin{align}
    C := \sum \limits_{i} [\max_j(t_{i,j})], \quad \forall i \in V \nonumber \\ 
    L := \sum \limits_i m_i, \quad \forall i \in P \nonumber
\end{align}
where $m_i$ is the maximum user ride time for request $i$. In the following section, we mark the best obtained $I_H$ and $I_{\epsilon}$ values in bold. If an instance is solved optimally within the time limit, we mark the results of the fastest algorithm in bold. In case a considered algorithm does not obtain an efficient solution within the time limit, we indicate this by ``NA".

\subsection{Impact of enhancement methods} \label{sec:impact of enhancement}
Before conducting extensive experiments, we assess the efficiency of the enhancement methods described in Section \ref{sec::enhancement on algorithm}. We run a version of the \SOS algorithm with no enhancement at all (abbreviated as $ori$), a version with the enhanced first-step formulation only (abbreviated as $c_1$), a version using the dichotomic method to generate initial partitions only (abbreviated as $dic$), and a version with both enhancements (abbreviated as $hb$). We evaluate the proposed enhancement methods on the small-to-medium-sized instances also solved in \cite{parragh2009heuristic} with a time limit of two hours per instance and summarize the obtained results in Table \ref{results of enhancement}.
\begin{table}[!htp]
    \centering
    \begin{threeparttable}
    \caption{Results on small-to-medium-sized bi-objective DARP instances with different enhancement methods}
    \label{results of enhancement}
    \setlength{\belowcaptionskip}{0.5cm}
    \setlength\tabcolsep{1pt}
    \footnotesize
    \begin{tabular}{c |c c c c| c c c c|c c c c| c c c c | c c c}
    \hline
    &\multicolumn{4}{c|}{\SOS (no enhancem.)} &\multicolumn{4}{c|}{\SOS + enhan.~model} &\multicolumn{4}{c|}{\SOS + dichot.} & \multicolumn{4}{c|}{\SOS + both} &\multicolumn{3}{c}{\resizebox{2.5cm}{!}{\cite{parragh2009heuristic}$^a$}}\\
        \hline
        Instance & $|\mathcal{X}_e^{ori}|$ & CPU(s) 
        & $I_H$ & $I_{\epsilon}$
        & $|\mathcal{X}_e^{c_1}|$ & CPU(s) 
        & $I_H$ & $I_{\epsilon}$
        & $|\mathcal{X}_e^{dic}|$ & CPU(s) 
        & $I_H$ & $I_{\epsilon}$
        & $|\mathcal{X}_e^{hb}|$ & CPU(s)
        & $I_H$ & $I_{\epsilon}$
        & $|\mathcal{X}_e^r|$
        & $I_H$ & $I_{\epsilon}$\\
        \hline
        a2-16 &14$^*$ &31 &0.533 &1.000
        &\textbf{14$^*$}  & \textbf{8}
        &\textbf{0.553} & \textbf{1.000}
        &14$^*$ &32 &0.553 &1.000
        &14$^*$ &25
        &0.553 &1.000  
        &14$^*$ &0.553 &1.000\\
        a2-20 &17 &7200 &0.598 &1.000
        &17$^*$  & 1540   
        &0.598 & 1.000
        &15 &7200 &0.597 &1.003
        &\textbf{17$^*$} &\textbf{620}
        &\textbf{0.598} &\textbf{1.000}  
        &17$^*$ &0.598 &1.000\\
        a3-18 &14 &7200 &0.633 &1.000
        &14$^*$  &42
        &0.633 &1.000
        &12 &7200 &0.632 &1.001
        &\textbf{14$^*$} &\textbf{31}
        &\textbf{0.633} &\textbf{1.000}  
        &14$^*$ &0.633 &1.000\\
        a3-24 &16 &7200 &0.625 &1.007
        &\textbf{19$^*$}  &\textbf{3725}  
        &\textbf{0.625} &\textbf{1.000} 
        &17 &7200 &0.625 &1.001
        &19 &7200
        &0.625 &1.000 
        &19$^*$ &0.625 &1.000\\
        a3-30 &2 &7200 &0.534 &1.145
        &7   &7200  
        &0.539  &1.111
        &12 &7200 &0.541 &1.015
        &\textbf{17} &\textbf{7200} 
        &\textbf{0.544} &\textbf{1.015} 
        &31$^*$ &0.544 &1.000\\
        a4-16 &9$^*$ &84 &0.607 &1.000
        &\textbf{9$^*$}   &\textbf{3}
        &\textbf{0.607} &\textbf{1.000}
        &9$^*$ &79 &0.607 &1.000
        &9$^*$ &11
        &0.607 &1.000
        &9$^*$ &0.607 &1.000\\
        a4-24 &18 &7200 &0.630 &1.018
        &\textbf{36$^*$}  &\textbf{70}
        &\textbf{0.631} &\textbf{1.000}
        &10 &7200 &0.572 &6.494
        &36$^*$ &459  
        &0.631 &1.000 
        &36$^*$ &0.631 &1.000\\
        a4-32 &19 &7200 &0.643 &1.039
        &\textbf{72$^*$}  &\textbf{1243}
        &\textbf{0.644} &\textbf{1.000}
        &13 &7200 &0.644 &1.017
        &72$^*$ &2611
        &0.644 &1.000  
        &NA &0.644 &1.000\\
        \hline
        Avg 
        & 13.6 & 5514.4 & 0.600 & 1.026
        & 23.5 & 1728.9 & 0.604 & 1.014
        & 12.8 & 5491.4 & 0.596 & 1.691
        & 24.8 & 2269.6 & 0.604 & 1.002
        & 26.5 & 0.604 & 1.000 \\
        \hline
    \end{tabular}
    \begin{tablenotes}
        \item[a] Ref: For instance a4-32, ``NA'' indicates missing results in \cite{parragh2009heuristic}. Reference values of $I_H$ and $I_{\epsilon}$ are computed from all solutions reported in their study.
    \end{tablenotes}
   \end{threeparttable}
\end{table}

From Table \ref{results of enhancement}, we have the following observations: (1) The \SOS algorithm with the enhanced first-step formulation achieves huge improvements in both solution quality and computational efficiency compared to the version without any enhancement. This demonstrates that the tightened formulation greatly accelerates the \FBC algorithm by eliminating a large number of feasible fragment combinations that do not contribute to new efficient solutions. This enhancement is particularly crucial when exploring rectangles associated with a wide range of excess user ride times; 
(2) The \SOS algorithm enhanced with the dichotomic method ($dic$) seems to degrade overall performance: it leads to longer computation times and a lower number of efficient solutions. This suggests that solely enhancing the definition of initial rectangles with supported efficient solutions from the dichotomic method does not speed up the search within each rectangle.  Therefore, we do not consider this version in the remaining experiments.
(3) The \SOS algorithm incorporating both enhancements ($hb$) outperforms the version enhanced solely with the tighter first-step formulation ($c_1$) on certain instances. For example, we obtain more efficient solutions for instance a3-30 and lower computation times for instances a2-20 and a3-18. This might indicate a potential benefit of integrating the dichotomic method.
Nevertheless, the \SOS algorithm with the enhanced first-step formulation only continues to perform consistently well across all instances considered. Based on the above findings, we use version $c_1$ (so-called ``checker with MIP”) and version $hb$ (so-called ``hybrid checker”) in the following experiments. 
(4) Compare with other instances, the instance a3-30 seems to be challenging to solve with our methods (\SOS with MIP as well as hybrid \SOS). We observe long computation times within each rectangle, likely caused by the larger number of fragment combinations that have to be enumerated before a new efficient solution is identified. A potential perspective could be enhancing the \SOS method with efficient heuristics or investigating a bi-directional version of the \SOS method.

\subsection{Comparison to other approaches on DARP instances} \label{sec:overall results on DARP}
We apply our proposed methods and the benchmark algorithms on DARP instances. The computational results are summarized in Tables \ref{results summarize 1: checker on DARP} and \ref{results summarize 2: checker on DARP}.

\begin{table}[!htp]
    \centering
    \begin{threeparttable}
    \caption{Comparison of bi-objective DARP solution approaches on type-a instances using a MIP solver as search engine}
    \label{results summarize 1: checker on DARP}
    \setlength{\belowcaptionskip}{0.5cm}
    \setlength\tabcolsep{2pt}
    \footnotesize
    \begin{tabular}{c |c c c c| c c c c|c c c c| c c c c}
    \toprule
        &\multicolumn{4}{c}{Checker with MIP} &\multicolumn{4}{c}{$\epsilon$ with MIP} &\multicolumn{4}{c} {Balanced-box with MIP} & \multicolumn{4}{c}{\cite{parragh2009heuristic}$^a$}\\
        \hline
        Instance & $|\mathcal{X}_e^{c_1}|$ & CPU(s) 
        & $I_H$ & $I_{\epsilon}$
        & $|\mathcal{X}_e^{\epsilon_1}|$ & CPU(s) 
        & $I_H$ & $I_{\epsilon}$
        & $|\mathcal{X}_e^{b_1}|$ & CPU(s) 
        & $I_H$ & $I_{\epsilon}$
        & $|\mathcal{X}_e^r|$ & CPU(s)
        & $I_H$ & $I_{\epsilon}$\\
        \hline
        a2-16 &\textbf{14$^*$}  & \textbf{8}
        &\textbf{0.553} & \textbf{1.000}
        & 14$^*$ & 111 
        & 0.553 & 1.000
        &14$^*$  &84  
        & 0.553 & 1.000
        &14$^*$ &191
        &0.553 &1.000  \\
        a2-20 &17$^*$  & 1540   
        &0.598 & 1.000
        & 17$^*$ & 574  
        & 0.598 & 1.000
        &17$^*$ &512 
        & 0.598 & 1.000
        &\textbf{17$^*$} &\textbf{346}  
        &\textbf{0.598} &\textbf{1.000}  \\
        a2-24 &\textbf{22$^*$}  & \textbf{443} 
        &\textbf{0.565} & \textbf{1.000}
        &22$^*$ &1118  
        & 0.565 & 1.000
        &22$^*$  &1815
        & 0.565 & 1.000
        &22$^*$ &1463 
        &0.565 &1.000 \\
        a3-18 &\textbf{14$^*$}  & \textbf{42} 
        &\textbf{0.633} & \textbf{1.000}
        & 14$^*$ & 2625  
        &0.633 & 1.000
        &14$^*$  &3620  
        & 0.633 & 1.000
        &14$^*$ &378
        &0.633 &1.000  \\
        a3-24 &19$^*$  &3725  
        &0.625 &1.000
        & 14 & 18000    
        &0.623 &1.007 
        &5  &18000  
        &0.624 & 1.048
        &\textbf{19$^*$} &\textbf{1249} 
        &\textbf{0.625} &\textbf{1.000} \\
        a3-30 &7   & 18000  
        &0.539  &1.111 
        &1 &18000    
        &0.497 &1.145 
        &4  &18000    
        &0.539  &1.103 
        &\textbf{31$^*$} &\textbf{9698} 
        &\textbf{0.544} & \textbf{1.000} \\
        a3-36 &44$^*$  &17841 
        &0.609 &1.000
        &2 &18000  
        &0.555 &1.164
        &3  &18000
        & 0.603 &1.132 
        &\textbf{44$^*$} &\textbf{14174}
        &\textbf{0.609} &\textbf{1.000}  \\
        a4-16 &\textbf{9$^*$}   & \textbf{3} 
        &\textbf{0.607} & \textbf{1.000}
        & 6  & 18000 
        &0.591 &1.046 
        &5  &18000   
        &0.607  &1.050 
        &9$^*$ &253
        &0.607 &1.000\\
        a4-24 &\textbf{36$^*$}  & \textbf{70}  
        &\textbf{0.631} & \textbf{1.000}
        & 3 & 18000  
        &0.579 & 1.164
        & 4 & 18000   
        & 0.623 &1.171 
        &36$^*$ &4208  
        &0.631 &1.000 \\
        a4-32 &\textbf{72$^*$}  & \textbf{1243}
        &\textbf{0.644} & \textbf{1.000}
        & 1 & 18000
        &0.597 & 1.145
        &3  &18000   
        & 0.640 & 1.124
        & NA & NA & 0.644 & 1.000
        \\
        a4-40 &39  &18000
        &0.673 &1.015
        &1 &18000   
        &0.595 & 1.260
        &3  &18000 
        & 0.668 & 1.169
        & NA & NA
        &0.674 &1.000 \\
        a4-48 &56  &18000  
        &0.643 &1.024
        &1 &18000   
        &0.564 & 1.241
        &0  &18000
        &NA & NA
        & NA & NA
        &0.642 &1.000 \\
        a5-40 &168 &18000 
        &0.693 &1.002
        &1 &18000  
        &0.617 & 1.270
        &0  &18000 
        &NA & NA
        & NA & NA
        &0.694 &1.000 \\
        \hline
        Avg 
        &39.7 &7378 &0.616 &1.012
        &7.5 &12809 &0.613 &1.188
        &8.6 &13391 &0.606 &1.073
        &22.9 &3551  &0.617 &1.000 \\
        \hline
    \end{tabular}
    \begin{tablenotes}
        \item[a] Ref: In \cite{parragh2009heuristic}, the B\&C algorithm from \cite{ropke2007models} is called to solve the single-objective problem using the $\epsilon$-constraint method. From instance a4-32, ``NA'' indicates missing results in their paper. Reference values of $I_H$ and $I_{\epsilon}$ are computed from all solutions reported in their study.
    \end{tablenotes}
   \end{threeparttable}
\end{table}

\begin{table}[!ht]
    \centering
    \begin{threeparttable}
    \renewcommand\arraystretch{0.8}
    \caption{Comparison of bi-objective DARP solution approaches on type-a instances using B\&P/CG as search engine}
    \label{results summarize 2: checker on DARP}
    \setlength{\belowcaptionskip}{0.5cm}
    \setlength\tabcolsep{1pt}
    \footnotesize
    \begin{tabular}{c |c c c c|c c c c |c c c c|c c c c|c c c c}
    \toprule
        &\multicolumn{4}{c}{Checker with B\&P} &\multicolumn{4}{c}{Hybrid checker} &\multicolumn{4}{c}{$\epsilon$ with B\&P}&\multicolumn{4}{c}{BOBP} &\multicolumn{4}{c}{ Balanced-box with B\&P}\\
        \hline
        Instance & $|\mathcal{X}_e^{c_1}|$ & CPU(s) & $I_H$ & $I_{\epsilon}$
        & $|\mathcal{X}_e^{hb}|$ & CPU(s) & $I_H$ &$I_{\epsilon}$
        & $|\mathcal{X}_e^{\epsilon_2}|$ & CPU(s) & $I_H$ &$I_{\epsilon}$
        & $|\mathcal{X}_e^{bp}|$ & CPU(s) & $I_H$ &$I_{\epsilon}$
        & $|\mathcal{X}_e^{b_2}|$ & CPU(s) & $I_H$ &$I_{\epsilon}$\\
        \hline
        a2-16 &14$^*$  & 28
        &0.553 &1.000
        &\textbf{14$^*$} &\textbf{25}  
        &\textbf{0.553} &\textbf{1.000}
        & 14$^*$ & 297 
        &0.553 &1.000
        &14$^*$ &376 &0.553 &1.000
        &14$^*$ &396 
        &0.553 &1.000 \\
        a2-20 &17$^*$  & 4976
        &0.598 &1.000
        &\textbf{17$^*$} & \textbf{620} 
        &\textbf{0.598} &\textbf{1.000}
        & 17$^*$ & 1941
        &0.598 &1.000
        &17$^*$ & 18000 &0.598 &1.000
        &17$^*$ &2706
        &0.598 &1.000 \\
        a2-24&22$^*$  & 10324 
        &0.565 &1.000
        &\textbf{22$^*$} & \textbf{821}
        &\textbf{0.565} &\textbf{1.000}
        &22$^*$ &18000 
        &0.565 &1.000
        &18 &18000 &0.565 &1.007
        &20 &18000
        &0.564 &1.080 \\
        a3-18 &14$^*$  & 95 
        &0.633 &1.000
        &\textbf{14$^*$} & \textbf{31} 
        &\textbf{0.633} &\textbf{1.000}
        & 14$^*$ & 316   
        &0.632 &1.000 
        &14$^*$ &242.1 &0.633 &1.000
        &14$^*$ &531 
        &0.633 &1.000 \\
        a3-24 
        &19$^*$  & 11701 
        &0.625 &1.000
        &19$^*$ & 10657  
        &0.625 &1.000
        &\textbf{19$^*$} & \textbf{2990}  
        &\textbf{0.625} &\textbf{1.000}
        &18 &18000 &0.625 &1.005
        &19$^*$ &5001 
        &0.625 &1.000\\
        a3-30 &2   & 18000
        &0.534 &1.145
        &18 & 18000
        &0.544 &\textbf{1.013}
        &11 &18000   
        &0.539 &1.107
        &22 &18000 &\textbf{0.558} &1.388
        &6 & 18000
        &0.540 &1.103\\
        a3-36 &12  & 18000 
        &0.605 &1.125
        &22 &18000 
        &\textbf{0.609} &\textbf{1.016}
        &5 &18000
        &0.596 &1.168
        &30 &18000   &0.561  &\textbf{1.016} 
        &6 &18000 
        &0.603 &1.132 \\
        a4-16&\textbf{9$^*$}   & \textbf{5}
        &\textbf{0.607} &\textbf{1.000}
        &9$^*$  & 11  
        &0.607 &1.000
        & 9$^*$  & 406
        &0.607 &1.000
        &9$^*$ &127 &0.607 &1.000
        &9$^*$ &1322 
        &0.607 &1.000 \\
        a4-24 &22  & 18000 
        &0.631 &1.007
        &\textbf{36$^*$} & \textbf{459} 
        &\textbf{0.631} &\textbf{1.000}
        & 31 & 18000 
        &0.629 &1.085
        &27 &18000 &0.631 &1.009
        &19 &18000
        &0.624 &1.171\\
        a4-32  &72$^*$  & 12650
        &0.644 &1.000
        &\textbf{72$^*$} &\textbf{2611} 
        &\textbf{0.644} &\textbf{1.000}
        & 15 & 18000 
        &0.637 &1.143
        &23 &18000 &0.644 &1.014
        &14 &18000
        &0.640 &1.124 \\
        a4-40 &34  & 18000
        &\textbf{0.673} &1.015
        &18 &18000
        &\textbf{0.673} &\textbf{1.012}
        &4 &18000 
        &0.665 &1.205
        &11 & 18000  &\textbf{0.673}  &1.048
        &3 &18000 &0.667  &1.159 \\
        a4-48 &45  & 18000
        &0.643 &\textbf{1.024}
        &23 &18000 
        &\textbf{0.644} &1.035
        &4 &18000  
        &0.626 &1.208
        &21 &18000   &\textbf{0.644}  &1.035 
        &4 &18000 &0.636 &1.171 \\
        a5-40 &\textbf{128} &18000 
        &\textbf{0.693} &\textbf{1.005}
        &76 &18000 
        &\textbf{0.693} &1.011
        &6 &18000  
        &0.677 &Inf
        &32 & 18000  &\textbf{0.693}  & 1.024 
        &4 &18000 &0.686 &1.181 \\
        \hline
        Avg & 31.5 & 12214.5 & 0.616 & 1.025  
        & 27.7 & 8095.8 & 0.617 & 1.007
        & 13.1 & 11535 & 0.611 & 1.076
        & 19.6 & 14955 & 0.620 & 1.014
        & 11.5 & 13227 & 0.613 & 1.086  \\
        \hline
    \end{tabular}
   \end{threeparttable}
\end{table}

Table \ref{results summarize 1: checker on DARP} presents numerical results of three different algorithms that use a general-purpose MIP solver as a search engine. Among them, \textit{Checker with MIP} seems to be the best regarding solution quality and computational efficiency. Our findings can be summarized as follows: 
\begin{itemize}
    \item Regarding the number of efficient solutions, \textit{Checker with MIP} finds on average 39.7 efficient solutions, which is almost 5 times the number of efficient solutions found by \textit{$\epsilon$ with MIP} and \textit{Balanced box with MIP} within the time limit. 
    For the benchmark results, we refer to the ones reported in \cite{parragh2009heuristic}, where they applied an $\epsilon$-constraint method with a tailored B\&C to solve the single-objective problem in each iteration. \cite{parragh2009heuristic} only report results for those instances that could be solved to opimality and we mark some results with ``NA" to indicate that they are not available. 
    From Table \ref{results summarize 1: checker on DARP}, we observe that the benchmark algorithms (i.e., \textit{$\epsilon$-constraint with MIP} and \textit{balanced-box with MIP}) are clearly less efficient as they can only solve small instances optimally while requiring long computation times;
    \item Regarding solution quality, the results from \textit{Checker with MIP} are very close to the benchmark results on both $I_H$ and $I_{\epsilon}$ values, where optimal solutions are available and they are better than the implemented benchmark algorithms for those instances where optimal solutions are not available. 
    \item Regarding computation time, \textit{Checker with MIP} solves small-sized instances (e.g., a2-16, a2-24, a3-18, a4-16) very efficiently (within seconds) and it also performs well on medium-to-large-scale instances, especially on a4-32. The $\epsilon$-constraint method experiences a significant increase in computation time with increasing instance size and can only find one efficient solution within time limit.
\end{itemize}
Table \ref{results summarize 2: checker on DARP} presents numerical results of five different algorithms that apply the problem-tailored B\&P \citep{su2024branch} as search engine. Among the five considered algorithms, the hybrid checker method seems to be the most efficient one in terms of solution quality and computational time. To further compare the performance of the different methods, we use \textit{performance profiles} as in \cite{boland2015criterion, parragh2019branch}: The performance of an algorithm on a certain instance is assessed by the ratio of the CPU time required by that algorithm for the analyzed instance to the minimum CPU time obtained from all considered algorithms for that instance. The \textit{performance profile} is a chart where the x-axis represents performance while the y-axis represents the fraction of instances solved at a certain performance or better. If the considered algorithm does not finish within the time limit, then it does not have a performance on that instance. If there are no algorithms that finish solving an instance within the time limit, then this instance is discarded. Figure \ref{darp performance profile} shows the performance profiles of the different solution methods on the DARP instances. Among all considered algorithms, the performance profiles clearly demonstrate that our checker-based methods outperform the other methods, and that \textit{\SOS with MIP} performs the best.
\begin{figure}[!ht]
\centering
\includegraphics[width=9cm]{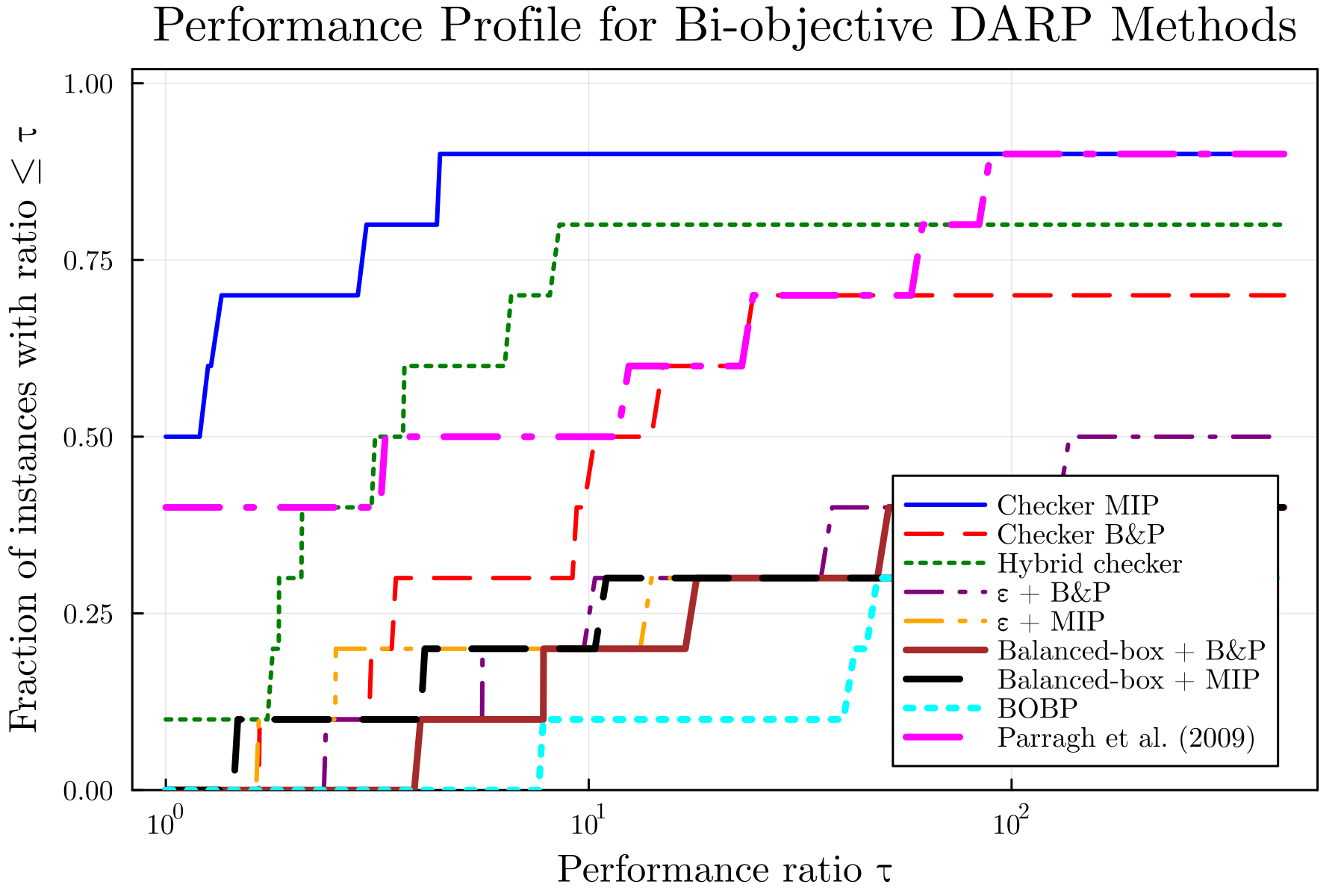}
\caption{\centering Performance profiles on DARP instances that are solved by at least one method.}
\label{darp performance profile}
\end{figure}

To summarize, the results on classical DARP instances demonstrate both the efficiency and solution quality of our proposed methods compared with state-of-the-art methods. As \textit{BOBP algorithm} and \textit{balanced-box method} are dominated by other methods, we will not consider them on the E-ADARP instances.

\subsection{Results for the E-ADARP instances} \label{sec:overall results on E-ADARP}
In this part, we apply different solution methods, including \textit{Checker with MIP}, \textit{$\epsilon$ with MIP}, \textit{Checker with B\&P}, \textit{$\epsilon$ with B\&P}, and \textit{Hybrid checker}, on the E-ADARP instances. As we do not have benchmark results from the literature, we keep all the non-dominated point candidates found by all considered methods as the reference set. We summarize results of different solution methods under different energy restriction levels ($\gamma = 0.1, 0.4, 0.7$) in Table \ref{E-ADARP results summarize} and compare them with the reference results.

\begin{table}[!ht]
\renewcommand\arraystretch{0.9}
    \centering
    \begin{threeparttable}
    \caption{Overview of bi-objective E-ADARP solution approaches with different methods}
    \label{E-ADARP results summarize}
    \setlength{\belowcaptionskip}{0.5cm}
    \setlength\tabcolsep{0.2pt}
    \footnotesize
    \begin{tabular}{c |c c c c|c c c c|c c c c|c c c c | c c c c |c c}
    \toprule
        &\multicolumn{4}{c}{Checker MIP} &\multicolumn{4}{c}{$\epsilon$ + MIP} 
        &\multicolumn{4}{c}{Checker B\&P} &\multicolumn{4}{c}{$\epsilon$ + B\&P}
        &\multicolumn{4}{c}{Hybrid checker}
        &\multicolumn{2}{c}{Ref}\\
        \hline
        $\gamma = 0.1$
        & $|\mathcal{X}_e^{c_1}|$ & CPU(s) & $I_H$ & $I_{\epsilon}$
        & $|\mathcal{X}_e^{\epsilon_1}|$ & CPU(s) & $I_H$ & $I_{\epsilon}$
        & $|\mathcal{X}_e^{c_2}|$ & CPU(s) & $I_H$ & $I_{\epsilon}$
        & $|\mathcal{X}_e^{\epsilon_2}|$ & CPU(s) & $I_H$ & $I_{\epsilon}$
        & $|\mathcal{X}_e^{hc}|$ & CPU(s) & $I_H$ & $I_{\epsilon}$
        & $|\mathcal{X}_e^{r}|$ & $I_H$\\
        \hline
        a2-16-0.1 
        &\textbf{14$^*$}  & \textbf{110}   
        &\textbf{0.581} &\textbf{1.000}
        &14$^*$ &853
        &0.581 &1.000
        &14$^*$   &157 
        &0.581 &1.000
        &14$^*$ & 6477
        &0.581 &1.000  
        &14$^*$ &143 
        &0.581 &1.000
        &14$^*$ &0.581 \\
        a2-20-0.1 
        &18 &18000
        &0.620 &1.000
        &\textbf{18$^*$} &\textbf{14695}   
        &\textbf{0.620} &\textbf{1.000}
        &17 & 18000 
        &0.620 &1.002
        &9 &18000
        &0.544 &7.160
        &13 & 18000 
        &0.620 &1.009
        &18$^*$ &0.620 \\
        a3-18-0.1 
        &14$^*$  & 2976 
        &0.654 &1.000
        &14$^*$ &3202
        &0.654 &1.000
        &\textbf{14$^*$}   & \textbf{405}
        &\textbf{0.654} &\textbf{1.000}
        &14$^*$ & 5399  
        &0.654 &1.000
        &14$^*$ & 1395
        &0.654 &1.000
        &14$^*$ &0.654 \\
        a3-24-0.1 
        &14  & 18000
        &\textbf{0.642} &1.013
        &5 &18000   
        &0.626 &1.051
        &18   & 18000
        &\textbf{0.642} &\textbf{1.006}
        &8 &18000 
        &0.585 &7.144 
        &12 & 18000
        &\textbf{0.642} &\textbf{1.006}
        &19 &0.642 \\ 
        a3-30-0.1 
        &2   & 18000  
        &0.547 &1.145
        &2 &18000 
        &0.439 &2.895
        &2    & 18000 
        &0.547 &1.145
        &3 &18000  
        &0.443 &2.895
        &18 &18000
        &\textbf{0.556} &\textbf{1.001}
        & 19 &0.556 \\
        a3-36-0.1 
        &2   & 18000 
        &0.605 &1.174
        &2 &18000
        &0.469 &17.558
        &5    & 18000 
        &0.609 &1.168
        &3 &18000 
        &0.444 &21.216
        &20 &18000
        &\textbf{0.621} &\textbf{1.001}
        &21 &0.621 \\
        a4-16-0.1 
        &9$^*$   &48
        &0.632 &1.000
        &9$^*$ &11452 
        &0.632 &1.000
        &\textbf{9$^*$}    & \textbf{19}
        &\textbf{0.632} &\textbf{1.000}
        &9$^*$ &13072
        &0.632 &1.000
        &9$^*$  & 44
        &0.632 &1.000
        &9$^*$ &0.632 \\
        a4-24-0.1 
        &\textbf{36$^*$}  &\textbf{9286}
        &\textbf{0.647} &\textbf{1.000}
        & 2 &18000 
        &0.575 &1.231
        & 36$^*$ &17340 &0.647 &1.000
        &3 &18000
        &0.453 &20.048
        &36$^*$ & 13958
        &0.647 &1.000
        &36$^*$ & 0.647\\
        a4-32-0.1 
        &33  & 18000 
        &0.655 &1.068
        & 0  &18000 
        &NA &NA
        &44  & 18000   
        &\textbf{0.656} &\textbf{1.002}
        &3 &18000  
        &0.509 &Inf
        &18 & 18000 
        &\textbf{0.656} &1.014
        &47 &0.656 \\
        a4-40-0.1 
        &23  & 18000
        &0.682 &1.113
        &0 & 18000  &NA &NA
        &30   &18000
        &\textbf{0.683} &\textbf{1.015}
        &3 &18000  
        &0.579 &Inf
        &18 & 18000 
        &\textbf{0.683} &1.018
        &31 &0.683 \\
        a5-40-0.1 
        &114 &18000
        &\textbf{0.702} &\textbf{1.006}
        &1 &18000
        &0.474 &36.550
        &108  &18000
        &\textbf{0.702} &\textbf{1.006}
        &7 & 18000
        &0.686 &1.222
        &25 &18000
        &0.701 &1.028
        &115 &0.702 \\
        \hline
        Avg 
        &25.4 &12583 &0.631  &1.047 
        &6.1 &14200 & NA &NA
        &27.0 &13083 &0.631  &1.032 
        &6.9 & 15359 & 0.558 & NA
        &17.9 &13231 &0.636  &1.007 
        &31.2 & 0.636 \\
        \hline
         $\gamma = 0.4$&\multicolumn{4}{c}{Checker MIP} &\multicolumn{4}{c}{$\epsilon$ + MIP} 
        &\multicolumn{4}{c}{Checker B\&P} &\multicolumn{4}{c}{$\epsilon$ + B\&P}
        &\multicolumn{4}{c}{Hybrid checker} 
        &\multicolumn{2}{c}{Ref}\\
        \hline
        a2-16-0.4 
        &\textbf{14$^*$}  & \textbf{125} 
        &\textbf{0.581} &\textbf{1.000}
        &14$^*$ &1848
        &0.581 &1.000
        &14$^*$   &160  
        &0.581 &1.000
        &14$^*$ &6702  &0.581   &1.000
        &14$^*$ & 149  
        &0.581 &1.000
        &14$^*$ &0.581 \\
        a2-20-0.4 
        &\textbf{16$^*$}  & \textbf{17800}
        &\textbf{0.617} &\textbf{1.000}
        &11 &18000 &0.610 &1.025
        &14   &18000 
        &0.617 &1.004
        &4 & 18000
        &0.528 &9.672
        &9  & 18000 
        &\textbf{0.617} &1.038
        &16$^*$ &0.617 \\
        a3-18-0.4 
        &14$^*$  & 6277 
        &0.654 &1.000
        &14$^*$ &3323  
        &0.654 &1.000
        &\textbf{14$^*$}   & \textbf{381}  
        &\textbf{0.654} &\textbf{1.000}
        &14$^*$ & 5137  
        &0.654 &1.000
        &14$^*$ &1238
        &0.654 &1.000
        &14$^*$ &0.654 \\
        a3-24-0.4 
        &14  & 18000 
        &\textbf{0.642} &1.013 
        &4 &18000  
        &0.624 &1.057
        &18   & 18000  
        &\textbf{0.642} &\textbf{1.006}
        &7 &18000  
        &\textbf{0.642} &\textbf{1.006}
        &10 & 18000
        &0.642 &1.017
        &19 &0.642 \\
        a4-16-0.4 
        &9$^*$   & 42
        &0.632 &1.000
        &9$^*$ &11544
        &0.632 &1.000
        &\textbf{9$^*$}    &\textbf{18}
        &\textbf{0.632} &\textbf{1.000}
        &9$^*$ & 8057 
        &0.632 &1.000
        &9$^*$  & 31 
        &0.632 &1.000 
        &9$^*$ &0.632 \\
        a4-24-0.4 
        &\textbf{36$^*$} &\textbf{13347} &\textbf{0.647}  &\textbf{1.000}
        &2 &18000
        &0.575 &1.231
        &18   & 18000 
        &0.646 &1.067
        &5 & 18000
        &0.467 &18.638
        &36$^*$ & 15226 
        &0.647  &1.000
        &36$^*$ &0.647 \\
        a4-32-0.4 
        &32  &18000
        &0.655 &1.068
        &0 &18000
        &NA &NA
        &43   &18000
        &\textbf{0.656} &\textbf{1.003}
        &4 &18000  
        &0.516 &Inf
        &22 & 18000 
        &\textbf{0.656} &1.011
        &47 &0.656 \\ 
        \hline
        Avg 
        &19.3 &10513 &0.633  &1.012 
        &7.7 &12674 &NA &NA
        &18.6 &10365 &0.633  &1.012 
        &8.1 & 13128 & 0.574 & NA
        &16.3 &10092 &0.633  &1.009
        &22.1 & 0.633 \\
        \hline
         $\gamma = 0.7$&\multicolumn{4}{c}{Checker MIP} &\multicolumn{4}{c}{$\epsilon$ + MIP} 
        &\multicolumn{4}{c}{Checker B\&P} &\multicolumn{4}{c}{$\epsilon$ + B\&P}
        &\multicolumn{4}{c}{Hybrid checker}
        &\multicolumn{2}{c}{Ref}\\
        \hline
        a2-16-0.7
        &\textbf{14$^*$}  & \textbf{360} 
        &\textbf{0.574} &\textbf{1.000}
        &14$^*$ &4342 
        &0.574 &1.000
        &14$^*$  &625  
        &0.574 &1.000
        &7 & 18000 
        &0.523 &3.685
        &14$^*$ & 1052  
        &0.574 &1.000
        &14$^*$ &0.574 \\ 
        a2-20-0.7
        &12  & 18000 
        &\textbf{0.573} &\textbf{1.000}
        &0 &18000  &NA&NA
        &10  & 18000   
        &0.545 &1.650
        & 3 & 18000    
        &0.513 &2.078
        &3  & 18000
        &0.553 &1.129
        &12 &0.573 \\
        a3-18-0.7 
        &16$^*$  & 16227  
        &0.649 &1.000
        &3 &18000
        &0.616 &1.106
        &\textbf{16$^*$}  &\textbf{1693}
        &\textbf{0.649} &\textbf{1.000}
        & 13 & 18000   
        &0.649 &Inf
        &16$^*$ & 13706  
        &0.649 &1.028
        &16$^*$ &0.649 \\ 
        a3-24-0.7 
        &12  & 18000 
        &\textbf{0.640} &\textbf{1.013}
        &2 &18000
        &0.614 &1.080
        &13  & 18000 
        &\textbf{0.640} &\textbf{1.013}
        &7 & 18000   
        &0.563 &Inf
        &11 & 18000  
        &\textbf{0.640} &1.017
        &19 &0.641 \\
        a4-16-0.7 
        &8$^*$  &408 
        &0.632 &1.000
        &4 &18000  
        &0.613 &1.057
        &\textbf{8$^*$}   & \textbf{157}
        &\textbf{0.632} &\textbf{1.000}
        &3 &18000
        &0.629 &1.111
        &8$^*$  &218
        &0.632 &1.000
        &8$^*$ &0.632 \\
        a4-24-0.7 
        &18  &18000
        &0.639 &1.071
        &0 &18000   & NA &NA
        &16  & 18000
        &\textbf{0.640} &\textbf{1.017} 
        &4 & 18000   
        &0.478 &17.038
        &11 & 18000 
        &0.639 &1.056
        &23 &0.640 \\
        a4-32-0.7 
        &8 &18000
        &0.645  &1.173
        &0 &18000   &NA &NA
        &13  & 18000
        &0.646 &1.128
        &2 &18000   &0.641  &Inf
        &12  & 18000
        &\textbf{0.649} &\textbf{1.060}
        &13 &0.650 \\
        \hline
        Avg 
        &12.4 & 12967 & 0.622 & 1.037
        &3.3 & 16048 & NA & NA
        &12.9 & 10639 & 0.618 & 1.115
        &5.6 & 18000 & 0.571 & NA
        &10.1 & 13039 & 0.619 & 1.041
        &15.0 & 0.623 \\
        \hline
    \end{tabular}
   \end{threeparttable}
\end{table}

\paragraph{Comparison among algorithms} Table \ref{E-ADARP results summarize} provides us with the following observations: (1) our checker-based methods are shown to be more efficient than the $\epsilon$-constraint method as they solve more instances optimally within shorter computational time. For instances that the considered methods cannot solve optimally within the time limit, the Pareto frontiers obtained from the checker-based methods are of better quality than those obtained from the $\epsilon$-constraint method. 
(2) It should be noted that the $\epsilon$-constraint method with MIP has nice performance on some small instances (e.g., a2-20-0.1, a3-18-0.1), where it solves them optimally within a reasonable computational time. However, the performance of $\epsilon$-constraint method degrades quickly when facing harder instances (i.e., instances of $\gamma = 0,4, 0.7$).
Like above, we draw \textit{performance profiles} to compare the different solution methods for three different cases ($\gamma = 0.1, 0.4, 0.7$), as shown in Figure \ref{performance profile eadarp}. Our checker-based methods consistently outperform the $\epsilon$-constraint method across all energy restriction levels ($\gamma = 0.1, 0.4, 0.7$), especially under the high-energy restriction.

\begin{figure}[!ht]
    \centering
    \subfigure[$\gamma = 0.1$]{
    \includegraphics[width=8.5cm]{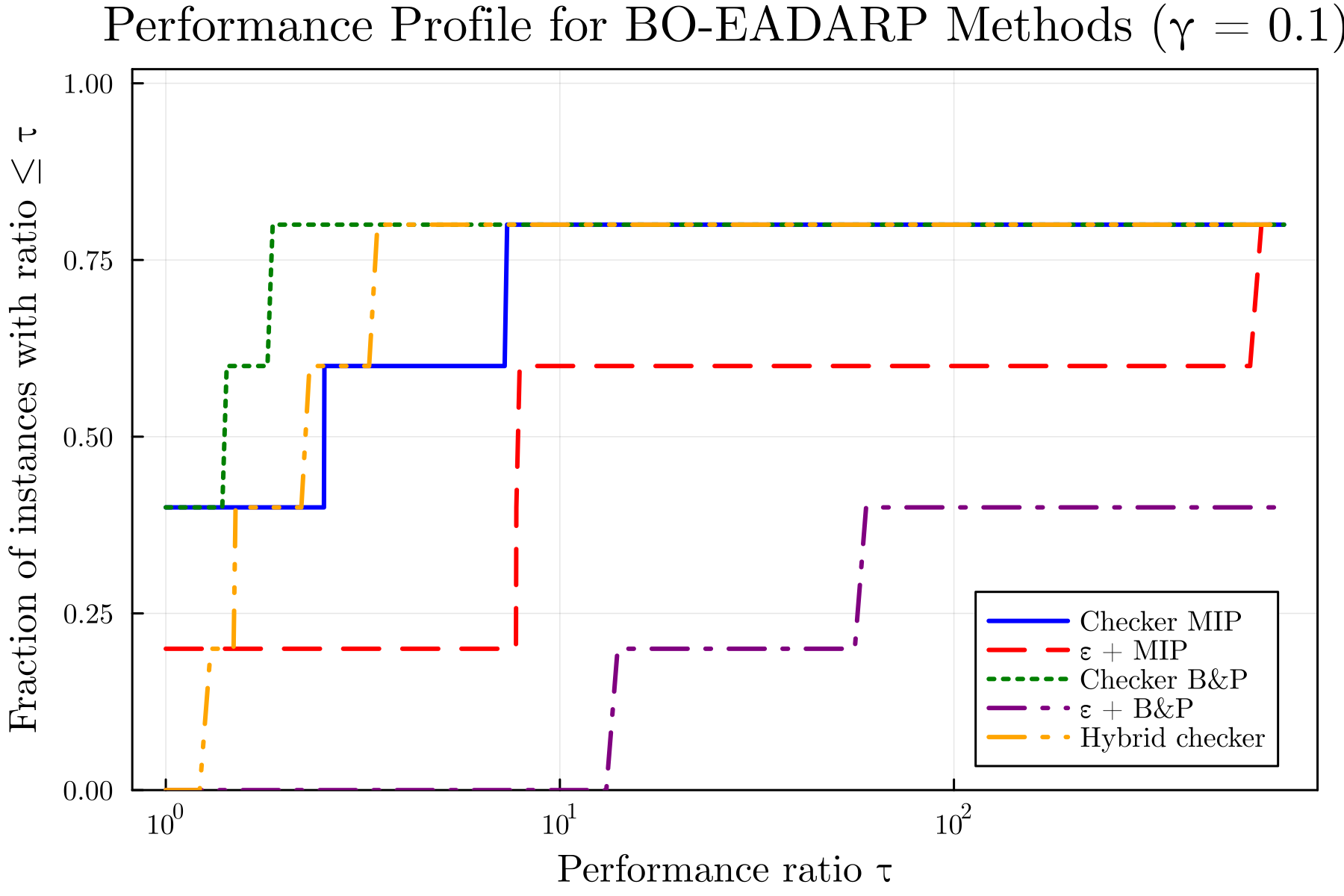}
    \label{performance profile eadarp 0.1}
    }
    \hfill
    \subfigure[$\gamma = 0.4$]{
    \includegraphics[width=8.5cm]{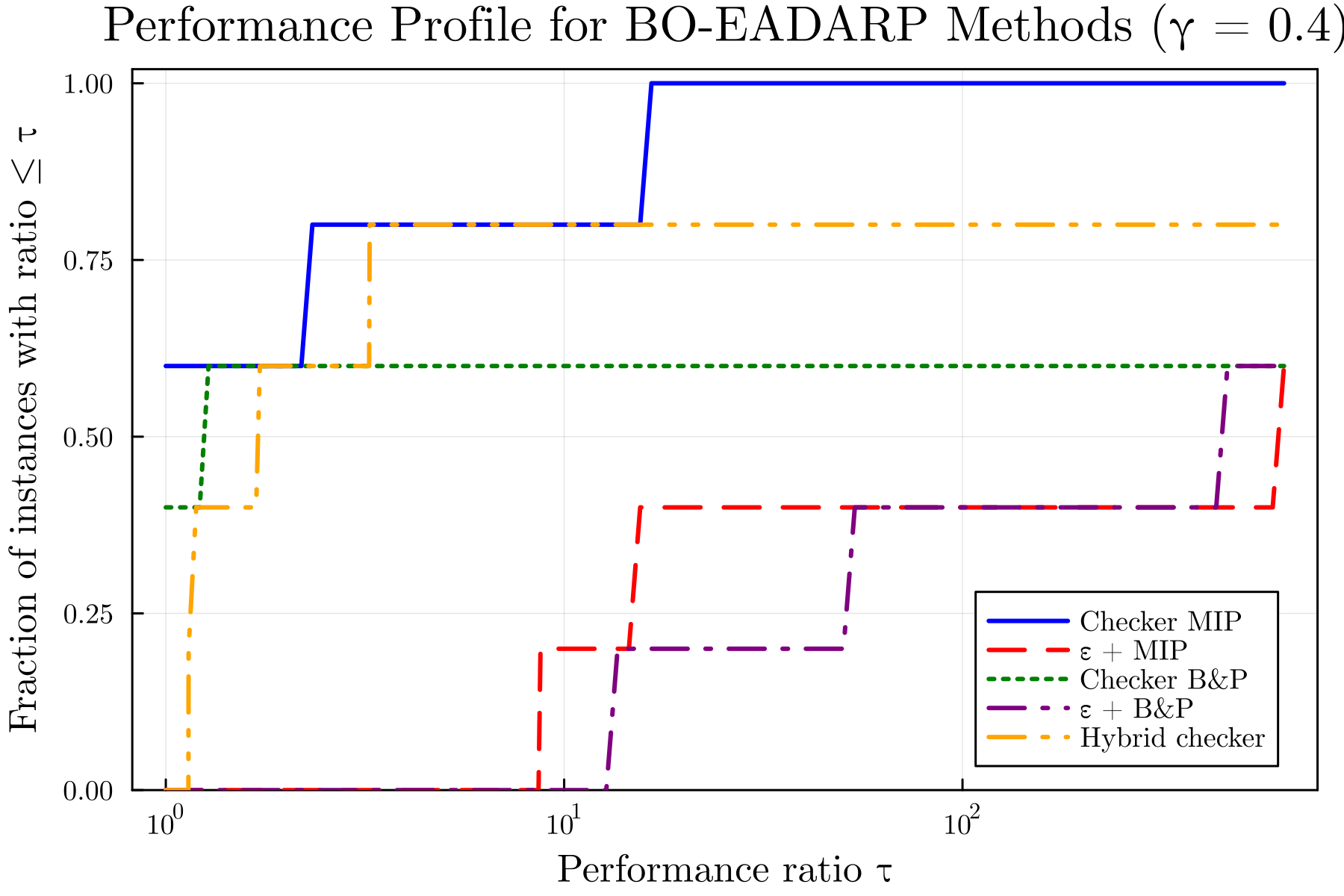}
    \label{performance profile eadarp 0.4}
    }
    \hfill
    \subfigure[$\gamma = 0.7$]{
    \includegraphics[width=9cm]{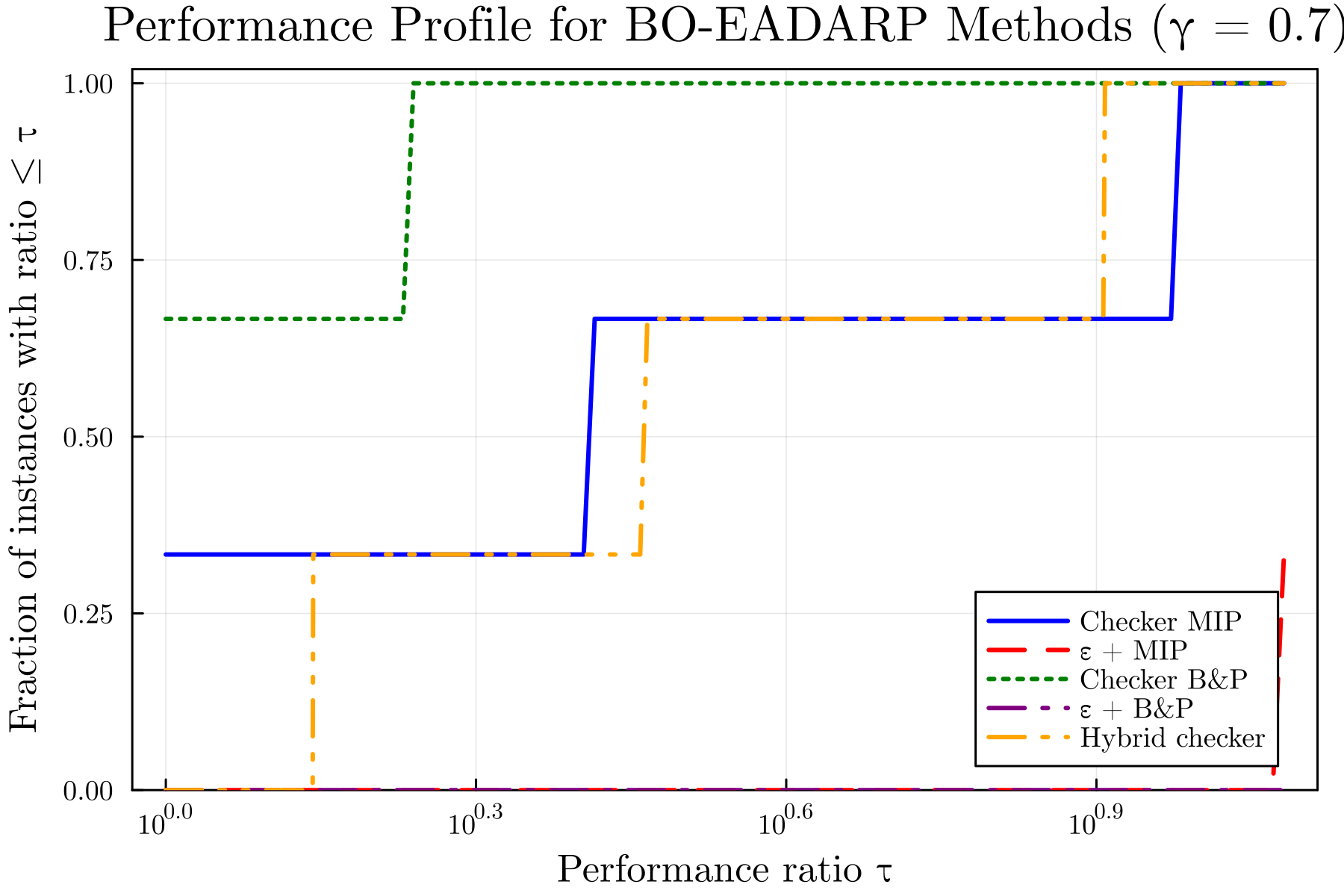}
    \label{performance profile eadarp 0.7}
    }
\caption{\centering Performance profiles on BO-EADARP instances that solved by at least one method.}
\label{performance profile eadarp}
\end{figure}




\paragraph{Comparison among different energy restriction levels}
In this part, we analyze the obtained efficient solutions under different settings of $\gamma$ for each instance, majorly from a management perspective. For the sake of explanation, we take an example with the results of instance a2-16 and a3-18 under $\gamma=0.1, 0.4,0.7$, as we generated complete sets of efficient solutions on these instance (results are shown in Figure \ref{PF under different gamma}). From the efficient solutions we obtained, we have the following observations: (1) with increasing values of $\gamma$, for most of obtained efficient solutions, there is an obvious increase in the total travel time while their total excess user ride time remains at the same level. This is consistent with the fact that vehicles need to make detours to recharging stations more frequently in order to satisfy minimum-battery-level constraints at the destination depot. As a vehicle performs recharging only when there is no passenger on board, then it will not introduce extra excess user ride time. Consequently, the efficient solutions obtained under $\gamma = 0.7$ has increased total travel times, while the total excess user ride times remain almost the same as in the case of $\gamma = 0.1,0.4$. In the case of $\gamma = 0.1, 0.4$, most of the obtained non-dominated points are the same. (2) the Pareto frontier we obtained under high end-of-route battery levels (i.e., $\gamma = 0.7$) lies inside, i.e., is dominated by, the frontiers obtained under lower energy restrictions ($\gamma \leq 0.4$). This is an expected result as some efficient solutions obtained under mild energy restriction may no longer be battery-feasible when considering more restrictive battery end levels. 

\begin{figure}[!ht]
    \centering
    \subfigure[instance (2-16)]{
    \includegraphics[width=8.5cm]{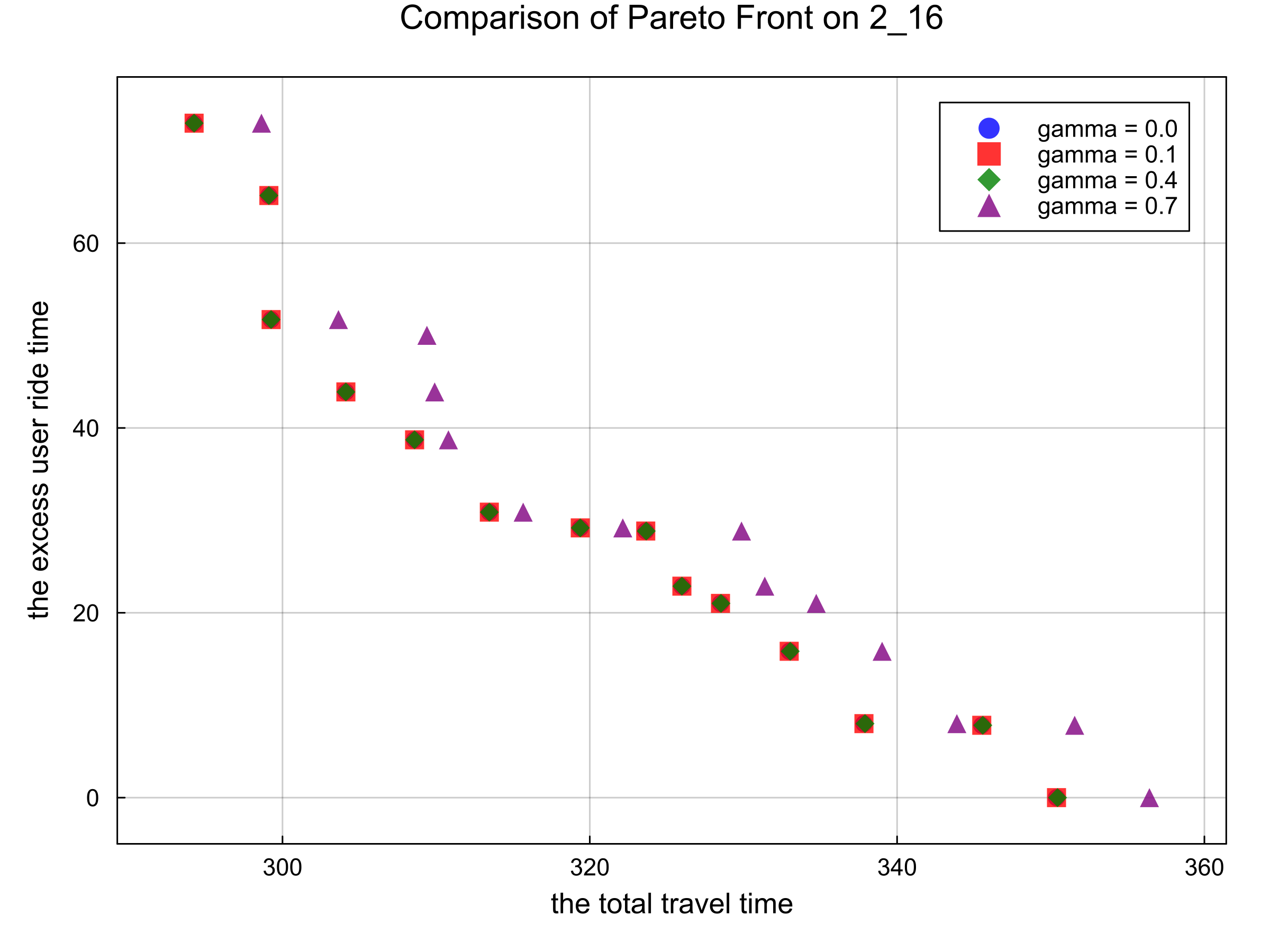}
    \label{PF diff gamma 2-16}
    }
    \hfill
    \subfigure[instance (3-18)]{
    \includegraphics[width=8.5cm]{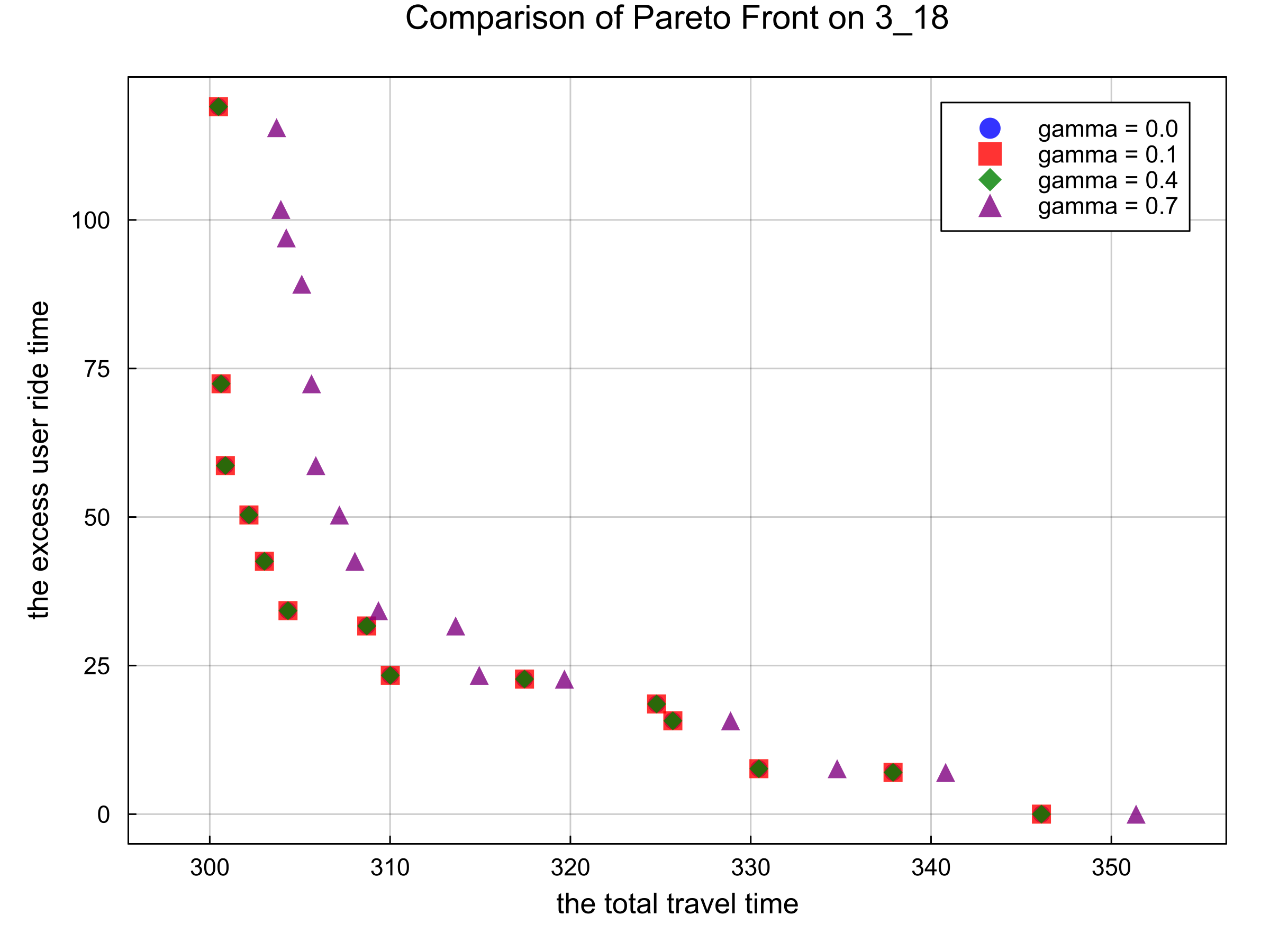}
    \label{PF diff gamma 3-18}
    }
\caption{\centering Pareto frontiers obtained under different $\gamma$ values}
\label{PF under different gamma}
\end{figure}

\subsection{Managerial insights and practical implications}
One important benefit of using the bi-objective approach is that the obtained efficient solutions can provide a clear picture of the compromises that need to be made when optimizing one objective at the expense of another. Also, efficient solutions can help decision-makers to make more informed and reliable choices. From the obtained efficient solutions on each instance, we observe a common trend, which offers us with managerial insights for different kinds of service providers. For the sake of illustration, we take the objective values of the non-dominated points for instance a2-16 under $\gamma = 0.1, 0.4, 0.7$ as an example (as shown in Table \ref{detailed results on a2-16}). The offered insights are summarized as follows:
\begin{enumerate}
    \item from the non-dominated points located in the upper left part of the Pareto front, we find that the total excess user ride time could be improved obviously with only a slight increase in the total travel time. For example, the total excess user ride time decreased from 65.15 minutes to 51.73 minutes with only 0.15 minutes increase in the total travel time (from 299.11 to 299.26 minutes) for both $\gamma=0.1$ $\gamma=0.4$. 
    This observation is of practical interests for profit-oriented service providers (e.g., Uber, Didi): it is possible to improve the service quality significantly while keeping the operational cost nearly optimal. 
    \item the non-dominated points located in the lower right part of the Pareto frontier, allow an inverse observation: in several cases, the total travel time could be improved obviously with only a slight increase in the total excess user ride time. For example, the total travel time can be reduced from 345.51 minutes to 337.84 minutes with only 0.18 minute's increase on the total excess user ride time (from 7.83 to 8.01 minutes). These efficient solutions are interesting for non-profitable associations (e.g., Red Cross) as they show that it is possible to reduce operational costs significantly while maintaining very high service quality.
\end{enumerate}

\begin{table}[!ht]
\centering
\begin{threeparttable}
\caption{Non-dominated points for instance a2-16 under $\gamma = 0.1, 0.4, 0.7$}
\label{detailed results on a2-16}
    \footnotesize
    \begin{tabular}{c c |c c | c c }
    \toprule
        $z_{0.1}^1(x)$  &$z_{0.1}^2(x)$ 
        &$z_{0.4}^1(x)$  &$z_{0.4}^2(x)$ 
        &$z_{0.7}^1(x)$  &$z_{0.7}^2(x)$ \\
         \hline
        294.25 & 72.98 & 294.25 & 72.98 & 298.63 & 72.98 \\
        299.11 & 65.15 & 299.11 & 65.15  & 303.64 & 51.73 \\
        299.26 & 51.73  & 299.26 & 51.73 & 309.40 & 50.02 \\
        304.12 & 43.90  & 304.12 & 43.90  & 309.90 & 43.90\\
        308.60 & 38.72 & 308.60 & 38.72  & 310.80 & 38.72 \\
        313.46 & 30.89 & 313.46 & 30.89 & 315.66 & 30.89 \\
        319.38 & 29.19 & 319.38 & 29.19 & 322.15 & 29.19\\
        323.64 & 28.85 & 323.67 & 28.85 & 329.87 & 28.85\\
        325.99 & 22.88 & 325.99 & 22.88 & 331.39 & 22.88\\
        328.51 & 21.02 & 328.54 & 21.02 & 334.74 & 21.02 \\
        332.98 & 15.84 & 333.05 & 15.84 & 339.02 & 15.84\\
        337.84 & 8.01  & 337.91 & 8.01  & 343.89 & 8.01\\
        345.51 & 7.83 & 345.58 & 7.83  & 351.56 & 7.83 \\
        350.38 & 0.00  & 350.44 & 0.00& 356.42 & 0.00\\
        \bottomrule
        \end{tabular}
        \end{threeparttable}
\end{table}

Few works  \citep[e.g.][]{su2024branch, stallhofer2025event} solve the weighted-sum objective and analyze the effect of setting different weights for total travel time and total excess user ride time. As solutions obtained from solving the weighted-sum objective are only supported efficient solutions (represented by points located on the convex hull of the Pareto frontier), they do not provide complete overviews of all efficient solutions (including non-supported ones). In our work, we provide decision makers with full pictures on the trade-off between operational costs and service quality, which allows them to select the appropriate transportation plans, according to their preferences.

\section{Conclusion and extensions} \label{sec::conclusion and extensions}
In this work, we work on a new problem variant of the E-ADARP, namely the BO-EADARP, where the total travel time and total excess user ride time are considered as two separate objectives. The BO-EADARP is significantly more complicated to solve than its single-objective version, as one must fully explore the bi-dimensional search space to find the complete set of Pareto optimal solutions. We propose a novel criterion-space search algorithm, the \SOSfull algorithm, to handle the BO-EADARP. The key component of the \SOS algorithm involves iteratively selecting feasible fragment combinations and checking  feasibility and dominance of the constructed solutions in a given region of the objective space (i.e., \FBCfull algorithm). To enhance the computational efficiency of the \SOS algorithm, we propose a tightened formulation and hybridize the \SOS algorithm with the dichotomic method. In numerical experiments, we first prove the efficiency of the proposed enhancement techniques. Then, we compare our proposed approaches (i.e., checker with MIP, with B\&P, and hybrid checker) with several state-of-the-art algorithms and literature results on classical DARP instances. A comparison of performance profiles highlights the efficiency 
of our proposed approaches. Then, we solve the BO-EADARP and analyze the obtained efficient solutions under different $\gamma$ settings. 
We observe a noticeable increase in the total travel times for the obtained efficient solutions with increasing $\gamma$ values, while the corresponding total excess user ride times remain stable. On each level of $\gamma$, the obtained efficient solutions offer managerial insights for different service providers: (1) for profit-oriented service providers, our results indicate that it is possible to significantly improve service quality while keeping near-optimal operational costs; (2) for not-for-profit service providers, in several cases, it is possible to improve on operational costs at rather small decreases in service quality. Our methods provide optimal Pareto frontiers for smaller scale instances and high quality approximations when hitting the time limit, allowing decision-makers to select Pareto-optimal or near-optimal transportation plans according to their priorities and preferences.

Our algorithm is a problem-tailored method for multi-objective problems that contain precedence constraints. It can therefore be applied to solve other multi-objective problems, such as the multi-objective pickup-and-delivery problem. In addition, our \SOS algorithm can be regarded as an efficient method to check the existence of efficient solutions in a given rectangle area. Therefore, it would be interesting to explore the hybridization of our algorithm with other methods. An extension could be to further enhance the performance of our algorithm with heuristic methods that can quickly find efficient solution candidates to partition the objective space. It would also
be interesting to investigate a bi-directional version.

\bibliography{main.bib}

\newpage
\appendix

\section{MILP formulation for step 2}
\label{appendix: stage 2 formulation}
The formulation of the \textit{Step 2} is defined on the graph $G_{sp}$ and we aim at constructing a feasible solution using the selected fragments of \textit{Step 1} while minimizing the total travel time:

\begin{equation}
     \min \sum\limits_{k \in K}\sum\limits_{i,j \in V}t'_{i,j}x_{i,j}^k
\end{equation}

subject to:

\begin{equation} \label{cons0.1}
    \sum\limits_{k \in K} x_{i,d_i'}^k = 1, \quad \forall i \in P'
\end{equation}

\begin{equation} \label{cons0.2}
    \sum\limits_{k \in K, j \in V  \setminus{d_i'}} x_{i,j}^k = 0, \quad \forall i \in P'
\end{equation}

\begin{equation} \label{cons1}
 \sum\limits_{j \in P' \cup F} x_{o^k,j}^{k} = 1, \quad \forall k \in K
\end{equation}

\begin{equation} \label{cons2}
 \sum\limits_{i \in D'\cup \{o^k\}}\sum\limits_{j \in F} x_{i,j}^{k} = 1, \quad \forall k \in K
\end{equation}

\begin{equation} \label{cons4}
 \sum\limits_{j \in V, j \neq i} x_{i,j}^{k} - \sum\limits_{j \in V, j \neq i} x_{j,i}^{k} = 0, \quad \forall k \in K, i \in N 
\end{equation}

\begin{equation} \label{cons3}
 \sum\limits_{k \in K} \sum\limits_{i \in D' \cup \{o^k\}} x_{i,j}^{k} \leqslant 1, \quad \forall j \in F 
\end{equation}

\begin{equation} \label{cons8}
\mathcal{B}^e_i \leqslant T_i^k \leqslant \mathcal{B}^l_i, \quad \forall k \in K, i \in V
\end{equation}

\begin{equation} \label{cons9}
 T_i^k + t_{i,j}' + s_i - M_{i,j}(1-x_{i,j}^k) \leqslant T_j^k,  \quad \forall k \in K, i \in V, j \in V, i \neq j| M_{i,j}>0
\end{equation}

\begin{equation} \label{cons17}
 B_{o^k}^k = B_0^k, \forall k \in K
\end{equation}

\begin{equation} \label{cons18}
 B_j^k \leqslant B_i^k - b'_{i,j} + Q(1-x_{i,j}^k), \forall k \in K, i \in V \setminus S, j \in V  \setminus \{o^k\}, i \neq j
\end{equation}

\begin{equation} \label{cons19}
 B_j^k \geqslant B_i^k - b'_{i,j} - Q(1-x_{i,j}^k), \forall k \in K, i \in V \setminus S, j \in V  \setminus \{o^k\}, i \neq j
\end{equation}

\begin{equation} \label{cons20}
 B_j^k \leqslant B_s^k + \alpha E_s^k - b_{s,j} + Q(1-x_{s,j}^k), \forall k \in K, s \in S, j \in P \cup F \cup S, s \neq j
\end{equation}

\begin{equation} \label{cons21}
 B_j^k \geqslant B_s^k + \alpha E_s^k - b_{s,j} - Q(1-x_{s,j}^k), \forall k \in K, s \in S, j \in P \cup F \cup S, s \neq j
\end{equation}

\begin{equation} \label{cons24}
    E_s^k \leqslant T_i^k - t_{s,i} - T_s^k + M_{s,i}^k\left(1-x_{s,i}^{k}\right), \quad \forall s \in S, i \in P\cup{S}\cup{F}, k\in K, i \ne s
\end{equation}

\begin{equation} \label{cons25}
    E_s^k \geqslant T_i^k - t_{s,i} - T_s^k - M_{s,i}^k\left(1-x_{s,i}^{k}\right), \quad \forall s \in S, i \in P\cup{S}\cup{F}, k\in K, i \ne s
\end{equation}

\begin{equation} \label{cons22}
 Q \geqslant B_s^k + \alpha E_s^k, \forall k \in K, s \in S
\end{equation}

\begin{equation} \label{cons23}
 B_i^k \geqslant \gamma Q, \forall k \in K, i \in F
\end{equation}

\begin{equation} \label{cons26}
    x_{i,j}^k \in \{0,1\}, \quad \forall k \in K, i \in V, j \in V
 \end{equation}

 \begin{equation} \label{cons27}
    B_i^k \geqslant 0, \forall k \in K, i \in V
 \end{equation}

 \begin{equation} \label{cons28}
    E_s^k \geqslant 0, \forall k \in K, s \in S
 \end{equation}
 
In the MILP formulation for step 2, $P'$ is the set of starting nodes of selected fragments from step 1 and $D'$ is the corresponding ending nodes set. $O$ and $F$ are origin and destination depot sets, respectively and $V$ is the set of vertices and $V = O \cup F \cup P'\cup D'$. The objective is to minimize the total travel time of the solution, where $t_{i,j}'$ denotes the travel time from node $i$ to $j$ on the graph $G_{sp}$. Constraints (\ref{cons0.1}) restrict that the arcs converted from the selected fragments must be visited and $d_i'$ is the corresponding ending node of the fragment and the starting node is $i$. Constraints (\ref{cons0.2}) forbid the visits from $i \in P'$ to other nodes except $d_i'$. Constraints (\ref{cons1}) and (\ref{cons2}) ensure that all vehicles start from their origin depots and end at a selected destination depot. Constraints (\ref{cons3}) guarantee that each destination depot is visited at most once. Constraints (\ref{cons4}) are flow conservation constraints. Constraints (\ref{cons8}) and (\ref{cons9}) are time window constraints based on adapted time windows on the graph $G_{sp}$. Constraints (\ref{cons17}) set the initial battery level for vehicle $k$ at the origin depot $o^k$. Constraints (\ref{cons18}) and (\ref{cons19}) calculate the battery levels of vehicles from $i \in V \setminus S$ to $j \in V \setminus \{o^k\}$. $b_{i,j}'$ represents the battery consumption on $G_{sp}'$. Constraints (\ref{cons20}) and (\ref{cons21}) track the battery levels of vehicles that leave from a recharging station $s \in S$ to the next node $j$. The maximum battery capacity is ensured in constraints (\ref{cons22}). Constraints (\ref{cons23}) restrict the minimum battery level that the vehicle must satisfy when returning to the destination depot. Constraints (\ref{cons24}) and (\ref{cons25}) set lower and upper bounds for recharging time at a recharging station.


\section{Implementation details for benchmark methods} \label{appendix: implementation for benchmark}
\subsection{Epsilon-constraint method integrated with B\&P to solve each single-objective problem} \label{appendix: implementation for epsilon with BP}
In the implementation of the $\epsilon$-constraint method to solve the BO-EADARP, we set the total travel time as the objective and add the total excess user ride time to the MP formulation. For each single-objective problem (with different $\epsilon$ values), we adapt the B\&P algorithm in \cite{su2024branch} by adding Constraint (\ref{chap5.3}) in the MP. Consequently, the dual variable $\kappa$ has to be accounted for when applying the labeling algorithm to solve the pricing subproblems. Briefly, when a partial path $\mathcal{P}$ is extended with a fragment $\mathcal{F}$, $R_{\mathcal{F}}\kappa$ is subtracted in the reduced cost computation where $R_{\mathcal{F}}$ is the corresponding excess user ride time on $\mathcal{F}$. The MP is modified as follows, and we denote the adapted MP as MP$^{\epsilon}$:
\begin{equation} \label{chap5.1}
\min \sum\limits_{\omega \in \Omega'}T_{\omega} y_{\omega} + \sum\limits_{i \in P}P_i a_i
\end{equation}
subject to:
\begin{equation}\label{chap5.2}
 \sum\limits_{\omega \in \Omega}\theta_{i \omega} y_{\omega} \geqslant 1 - a_i, \forall i \in P
\end{equation}

\begin{equation}\label{chap5.3}
 \sum\limits_{\omega \in \Omega}R_{\omega} y_{\omega} \leqslant \mathcal{E}_1 - \delta
\end{equation}

\begin{equation}\label{chap5.4}
 \sum\limits_{\omega \in \Omega}\phi_{f \omega} y_{\omega} \leqslant 1, \forall f \in S \cup F
\end{equation}

\begin{equation}\label{chap5.9}
     \sum\limits_{\omega \in \Omega}\epsilon_{o \omega} y_{\omega} \leqslant 1, \quad \forall o \in O
\end{equation}

\begin{equation}\label{chap5.5}
\sum\limits_{\omega \in \Omega} y_{\omega} \leqslant |K|
\end{equation}

\begin{equation}\label{chap5.6}
     y_{\omega} \in \{0,1\}, \forall \omega \in \Omega
\end{equation}

\begin{equation}\label{chap5.7}
     a_{i} \in \{0,1\}, \forall i \in P
\end{equation}

In the objective function, $T_{\omega}$ is the total travel time for route $\omega$. The Constraint (\ref{chap5.3}) is the constraint for excess user ride time. $R_{\omega}$ denotes the excess user ride time of route $\omega$. The objective function of the pricing subproblem is:
\begin{equation}\label{chap5.8}
c_{\omega} - \sum\limits_{i \in P}\theta_{i,\omega}\pi_i -\sum\limits_{f \in S \cup F}\phi_{f \omega}\tau_f - \sum\limits_{o \in O}\epsilon_{o \omega}\zeta_o - \sigma - R_{\omega} \kappa
\end{equation}
where $\pi_i \ (i \in P)$, $\tau_f \ (f\in S \cup F)$, and $\zeta_o \ (o \in O)$ are the dual variable of Constraints (\ref{chap5.2}), (\ref{chap5.4}), (\ref{chap5.9}), respectively. Another dual variable associated with Constraint (\ref{chap5.5}) is $\sigma$. The newly introduced dual variable by Constraint (\ref{chap5.3}) is denoted as $\kappa$. 

It should be mentioned that the $\epsilon$-constraint method can also be implemented in the other direction. That is, considering the total excess user ride time as the objective and bounding the total travel time with an $\epsilon$ value. However, our preliminary experiments showed that the $\epsilon$-constraint method in the other direction yields much poorer performance compared to considering the total travel time as objective. The reason is that solutions with different total travel times may have the same objective value (i.e., the same value of total excess user ride time). As a result, we must conduct several computations before reaching the one that has the minimum total travel time among the solutions that have the same value of total excess user ride time (i.e., the real non-dominated point), which introduces extensive computations. 
In contrast, using total travel time as the objective allows for more efficient and effective identification of non-dominated points, leading to better performance. 
Hence, we will only present the $\epsilon$-constraint method that considers the total travel time as the objective in this work.

\subsection{Implementation details for the balanced-box method} \label{appendix: implementation for balanced-box}
The balanced box method computes non-dominated points recursively in the initial rectangle defined by two lex-min solutions. For the sake of clarity, we use the notations $z^T$ and $z^B$ to denote the obtained non-dominated points through the lexicographical minimization processes. These two points define a rectangle, denoted as $R(z^T,z^B)$. 
\begin{enumerate}
\item First, the rectangle $R(z^T,z^B)$ is split in the horizontal direction along $z_2(x)$ axis into two equal smaller rectangles $R^T$ and $R^B$. These two smaller rectangles are defined by $z^T$ and $z^t$, $z^b$ and $z^B$, respectively, with $z^t = (z_1^B,(z_2^T+z_2^B)/2)$ and $z^b = (z_1^T,(z_2^T+z_2^B)/2)$. As shown in Figure \ref{balanced_box1}, we define two rectangles $R^T$ and $R^B$. The red solid point is the next non-dominated point that we will obtain in the next step, and the hollow black points are those that we have not obtained yet.
\item Then, rectangle $R^B$ is searched to find the next non-dominated point by solving lexicographically $z_1(x)$ and $z_2(x)$, i.e., 
$lex\min_{x\in \mathcal{X}}\{z_1(x),z_2(x):z(x) \in R^B\}$. The obtained solution $\Bar{x}^1$ is an efficient solution, and its image $\Bar{z}^1$ is a non-dominated point, as proved in \cite{boland2015criterion}. This point is used to further partition the rectangle $R^T$, as some parts in $R^T$ are dominated by $\Bar{z}^1$ (shown in Figure \ref{balanced_box2}). Then, we focus on finding the next non-dominated point (marked in red) in rectangle $R^T$.
\item Next, the rectangle $R^T$ is explored by solving lexicographically $z_2(x)$ and $z_1(x)$, i.e., 
$lex\min_{x\in \mathcal{X}}\{z_2(x),z_1(x):z(x) \in R^T\}$. Supposing the obtained solution is $\Bar{x}^2$ and its image is $\Bar{z}^2$, it defines a new rectangle $R(z^T,\Bar{z}^2)$ (as shown in Figure \ref{balanced_box3}). 
\end{enumerate}
The above process is repeated until there are no unexplored rectangles.

\begin{figure}[!ht]
    \centering
    \subfigure[Step1.]{
    \includegraphics[width=5cm]{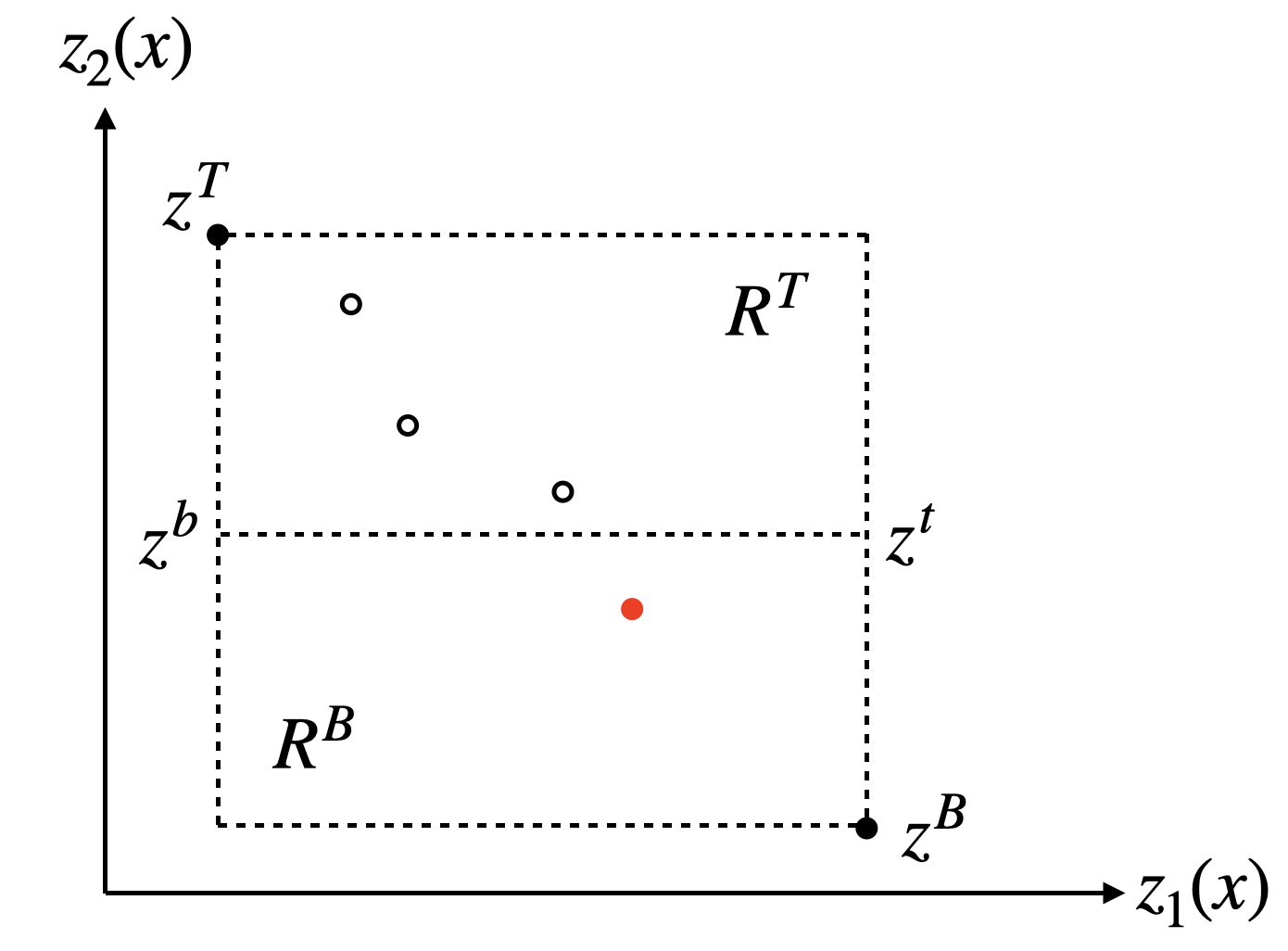}
    \label{balanced_box1}
    }
    \hfill
    \subfigure[Step2.]{
    \includegraphics[width=5cm]{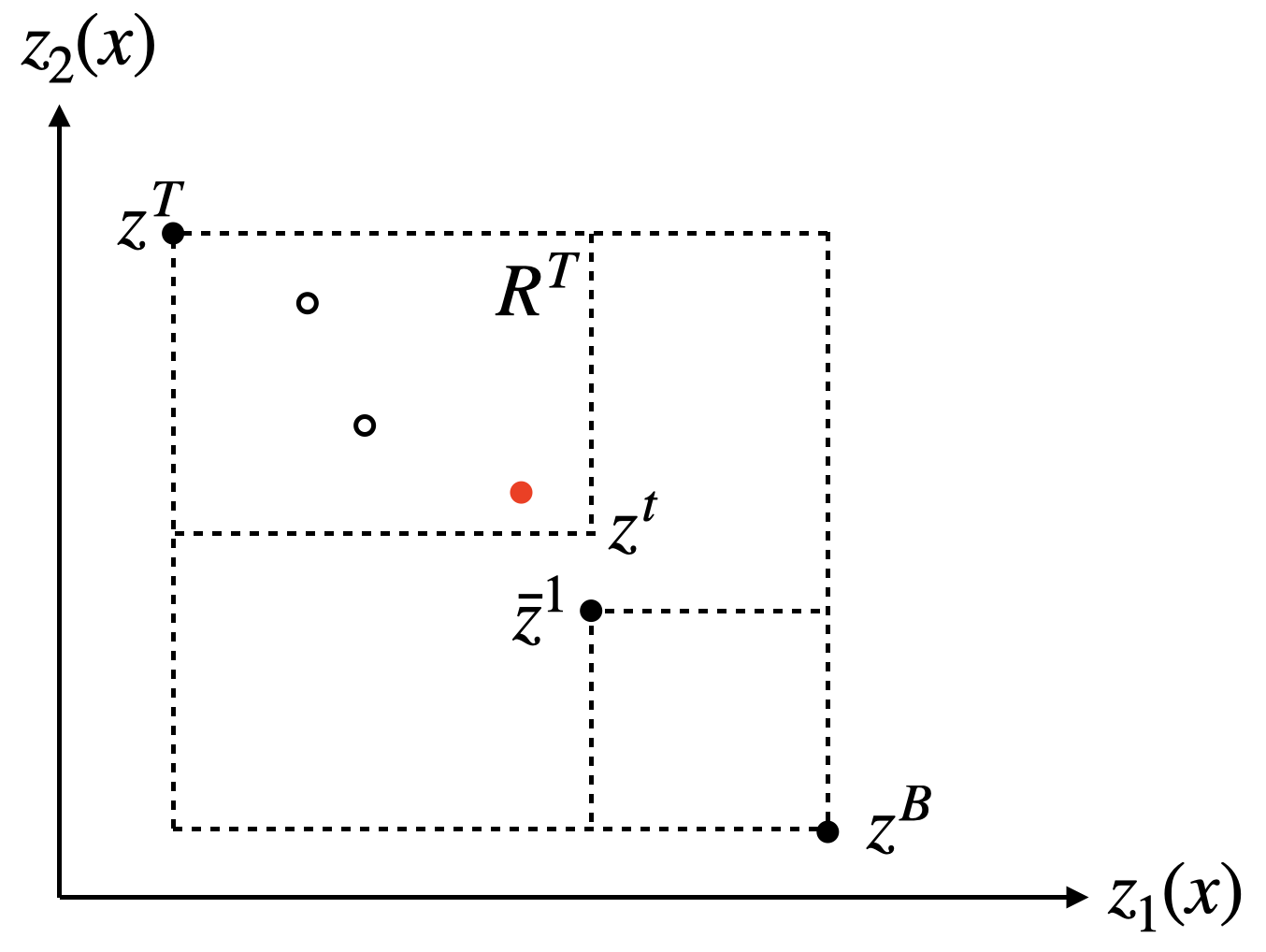}
    \label{balanced_box2}
    }
    \hfill
    \subfigure[Step3.]{
    \includegraphics[width=5cm]{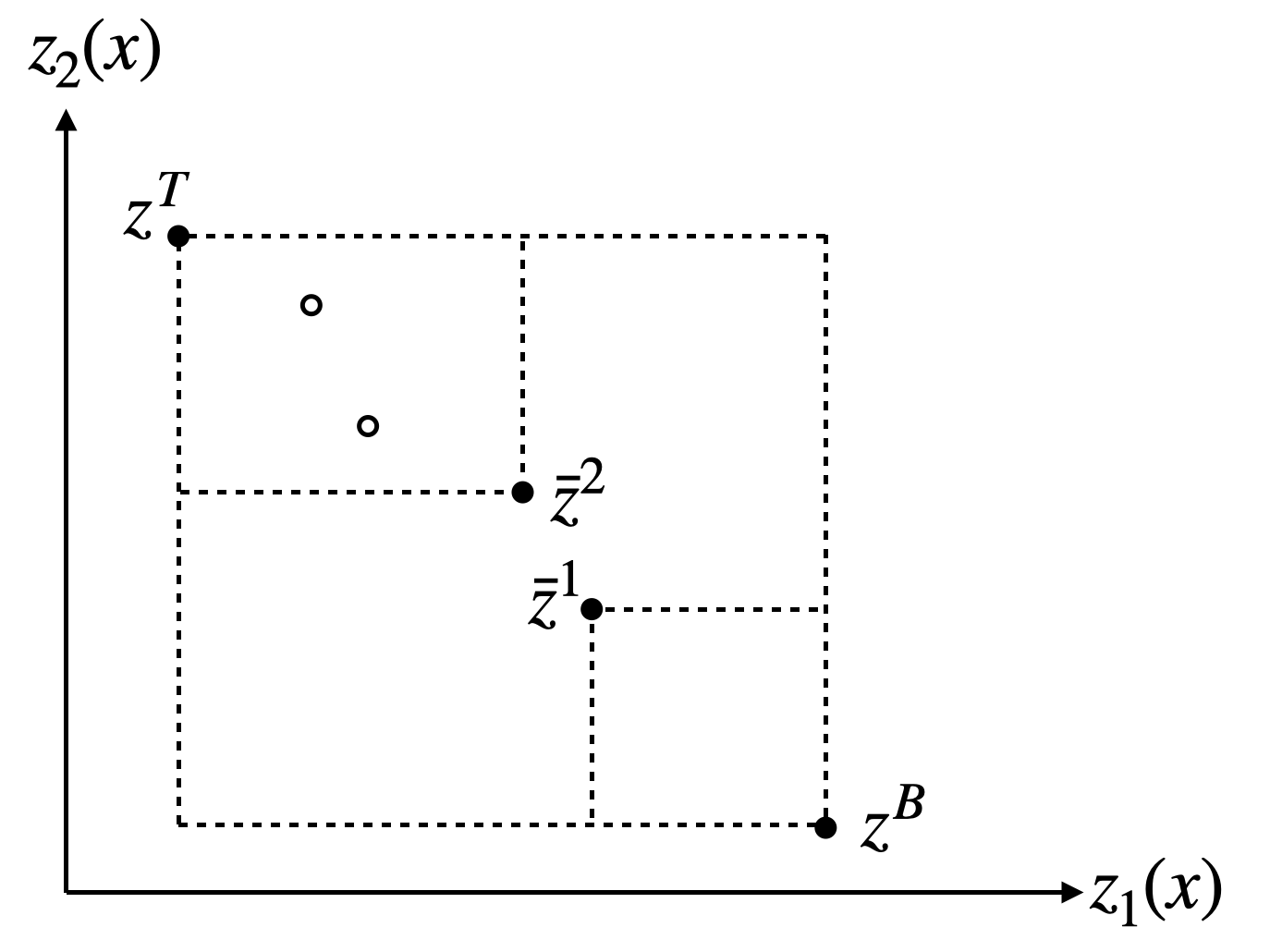}
    \label{balanced_box3}
    }
\caption{\centering Illustration for balanced box method}
\end{figure}

\subsection{Implementation details of the BOBP algorithm} \label{implementation of the BOBP}


The principle of the BOBP algorithm is similar to the single-objective one and the objective is to divide the original problem into easier subproblems which are stored in \textit{``nodes"} (denoted as $\eta$). Each subproblem of the BO-EADARP is denoted as $P(\eta)$. However, the BOBP algorithm is different from the single-objective B\&P in the sense that we calculate lower bound and upper bound sets (instead of single numerical values) to decide whether to fathom a node. Therefore, it is possible that a node is partially fathomed. The main ingredients of the BOBP algorithm are summarized as follows:
\begin{itemize}
    \item Calculate lower bound set and update upper bound set: On each $\eta$, we calculate the lower bound set (denoted as $\mathcal{L}(\eta)$) applying the dichotomic method proposed in \cite{aneja1979bicriteria}. To solve each weighted-sum objective problem, we call the CG algorithm of \cite{su2024branch}. Once $\mathcal{L}(\eta)$) is calculated, we first check if new non-dominated points are obtained. If it is the case, the upper bound set $\mathcal{U}$ is updated.
    \item Lower bound filtering and node fathoming: Then, the obtained $\mathcal{L}(\eta)$) is filtered with $\mathcal{U}$, which returns a set of disjoint non-dominated portions.
    If no such portion is generated after the filtering process, then the analyzed node $\eta$ can be fathomed, as it is fully dominated by the current upper bound set $\mathcal{U}$.
    \item Branching procedure: If the analyzed node cannot be fathomed, branching is applied to generate child nodes. We considered branching strategies proposed in \cite{parragh2019branch} and apply them to each disjoint non-dominated portion. After branching, a set of child nodes are added to the unprocessed node set $\mathcal{T}$.
\end{itemize}
The tree search terminates when there is no unprocessed node remaining in $\mathcal{T}$, and we have the set of non-dominated points $\mathcal{Y}_N$ equals to $\mathcal{U}$. 

The implementation of the BOBP algorithm is organized as follows: we first present the update process of the upper bound set and then introduce the notion of lower bound segments, as in \cite{parragh2019branch}. By defining lower bound segments (abbreviated as LB segments, defined as below), each LB segment included in $\mathcal{L}(\eta)$ at node $\eta$ is compared to all the points in $\mathcal{U}$ and dominated LB segments are removed. Then, the filtered $\mathcal{L}(\eta)$ is served as input to the branching procedure, which includes the objective branching concept proposed in \cite{parragh2019branch} and other branching rules. 

\begin{defn}[LB segments defined in \cite{parragh2019branch}]\label{LB segment}
 Assuming there are two points $p = (p_1,p_2)$, $q = (q_1,q_2)$, and a local nadir $c =(c_1,c_2)$, such that $p_1<q_1$, $p_2 > q_2 $ and $c_1 > q_1$, $c_2 > p_2$. These three points define a segment $s$ if and only if $\{(z_1,z_2)\in \Xi|z_2 < az_1+b\} = \emptyset$, where $a = (q_2-p_2)/(q_1-p_1)$ presents the slope of the line connecting $p$ and $q$ and $b = p_2 - a*p_1$ is the intercept of this line on $z_2$-axis.
\end{defn}

\subsubsection{Updating upper bound set}
We check if there is(are) integer solution(s) generated each time after we calculate $\mathcal{L}(\eta)$ at a node $\eta$. If an integer solution is obtained, $\mathcal{U}$ is updated. 
One of the following cases can happen when we obtain an integer solution $x$.
\begin{enumerate}
    \item Solution $x$ is dominated by solutions corresponding to points in the current upper bound set $\mathcal{U}$. In this case, $x$ is discarded.
    \item Solution $x$ is not dominated by any solutions corresponding to points in the current upper bound set $\mathcal{U}$. In this case, $x$ is added to the set of efficient solutions, and its corresponding point is added to $\mathcal{U}$. Moreover, if solution $x$ dominates some solutions corresponding to points in $\mathcal{U}$, then 
    dominated points are removed from $\mathcal{U}$.
\end{enumerate}

\subsubsection{Lower bound set filtering and node fathoming} \label{sec::lower bound filtering}

One of the main challenges for the bi-objective branch-and-bound algorithm is to evaluate whether a node $\eta$ can be fathomed. It can be done by comparing the obtained $\mathcal{L}(\eta)$ with the current $\mathcal{U}$ and eliminate search areas that will not contain any non-dominated solutions. Assume that we obtain the lower bound set $\mathcal{L}(\eta)$ by solving $P(\eta)$ with dichotomic approach. Each point of $\mathcal{L}(\eta)$ in the objective space corresponds to the fractional solution of a weighted-sum problem.
Then, we sort $\mathcal{L}(\eta)$ in increasing order of the $z_1(x)$ axis. Any two consecutive points in the sorted $\mathcal{L}(\eta)$ accompanying a valid local nadir point define a search area where new non-dominated points may exist. A valid local nadir point can be obtained by simply taking large enough values for both objectives as coordinates. 
The sorted $\mathcal{L}(\eta)$ is then filtered by the current upper bound set $\mathcal{U}$. The detailed filtering process can be also found in Algotithm 1 of \cite{parragh2019branch}.

It should be noted that if there exists non-dominated portions generated after filtering the lower bound set, the node cannot be fathomed, and branching is called. In the case that no portion can be generated after filtering the lower bound set, the node is fathomed.

\subsubsection{Branching procedure} \label{sec:: bi-objective branching procedure}
After filtering the obtained lower bound set $\mathcal{L}(\eta)$ at a node $\eta$, if this node cannot be fathomed, we branch on $\eta$ to generate new child nodes. We consider different branching strategies for each disjoint non-dominated portion: (1) decision space branching and (2) objective space branching, which are adapted from literature (\cite{parragh2019branch,forget2022enhancing}). Interested readers can refer to Algorithm 2 of \cite{parragh2019branch}.

\end{document}